\title[non-free almost finite actions]{Non-free almost finite actions for locally finite-by-virtually $\Z$ groups}
\thanks{}
\theoremstyle{plain}
\newtheorem{Thm}{Theorem}[section]
\theoremstyle{definition}
\theoremstyle{plain}
\newtheorem{thm}[Thm]{Theorem}
\newtheorem{lem}[Thm]{Lemma}
\newtheorem{cor}[Thm]{Corollary}
\newtheorem{prop}[Thm]{Proposition}
\newtheorem{conj}[Thm]{Conjecture}
\theoremstyle{definition}
\newtheorem{defn}[Thm]{Definition}
\newtheorem{ques}[Thm]{Question}
\newtheorem{rmk}[Thm]{Remark}
\newenvironment{customthm}[1]
{\innercustomthm}
{\endinnercustomthm}
\newcommand{\A}[0]{\mathbb{A}}
\newcommand{\B}{B}
\newcommand{\J}{J}
\newcommand{\K}{\mathcal{K}}
\newcommand{\D}{D}
\newcommand{\Ch}{D}
\newcommand{\Zh}{\mathcal{Z}}
\newcommand{\E}{E}
\newcommand{\Oh}{\mathcal{O}}
\newcommand{\T}{{\mathbb T}}
\newcommand{\N}{{\mathbb N}}
\newcommand{\Z}{{\mathbb Z}}
\newcommand{\C}{{\mathbb C}}
\newcommand{\Q}{{\mathbb Q}}
\newcommand{\aut}{\mathrm{Aut}}
\newcommand{\supp}{\mathrm{supp}}
\newcommand{\eps}{\varepsilon}
\numberwithin{equation}{section}
\newcommand{\id}{\mathrm{id}}
\newcommand{\halpha}{\widehat{\alpha}}
\newcommand{\calpha}{\widehat{\alpha}}
\newcommand{\tih}{\widetilde {h}}
\newcommand\set[1]{\left\{#1\right\}}  
\newcommand\mset[1]{\left\{\!\!\left\{#1\right\}\!\!\right\}}
\newcommand{\CA}[0]{\mathcal{A}} \newcommand{\CB}[0]{\mathcal{B}}
\newcommand{\CC}[0]{\mathcal{C}} \newcommand{\CD}[0]{\mathcal{D}}
\newcommand{\CG}[0]{\mathcal{G}} \newcommand{\CH}[0]{\mathcal{H}}
\newcommand{\CQ}[0]{\mathcal{Q}} 
 \newcommand{\CT}[0]{\mathcal{T}}
\newcommand{\CW}[0]{\mathcal{W}} 
 \newcommand{\CZ}[0]{\mathcal{Z}}
\newcommand{\Ra}[0]{\Rightarrow}
\newcommand{\La}[0]{\Leftarrow}
\newcommand{\LRa}[0]{\Leftrightarrow}
\newcommand{\quer}[0]{\overline}
\newcommand{\eins}[0]{\mathbf{1}}			
\newcommand{\diag}[0]{\operatorname{diag}}
\newcommand{\ad}[0]{\operatorname{Ad}}
\newcommand{\ev}[0]{\operatorname{ev}}
\newcommand{\fin}[0]{{\subset\!\!\!\subset}}
\newcommand{\diam}[0]{\operatorname{diam}}
\newcommand{\Hom}[0]{\operatorname{Hom}}
\newcommand{\dst}[0]{\displaystyle}
\newcommand{\spp}[0]{\operatorname{supp}}
\newcommand{\lsc}[0]{\operatorname{Lsc}}
\newcommand{\del}[0]{\partial}
\newcommand{\GU}[0]{\CG^{(0)}}
\theoremstyle{definition}
\numberwithin{equation}{Thm}
\begin{document}
	\global\long\def\floorstar#1{\lfloor#1\rfloor}
	\global\long\def\ceilstar#1{\lceil#1\rceil}	
	
	\global\long\def\B{B}
	\global\long\def\A{A}
	\global\long\def\J{J}
	\global\long\def\K{\mathcal{K}}
	\global\long\def\D{D}
	\global\long\def\Ch{D}
	\global\long\def\Zh{\mathcal{Z}}
	\global\long\def\E{E}
	\global\long\def\Oh{\mathcal{O}}

	\global\long\def\T{{\mathbb{T}}}
	\global\long\def\BR{{\mathbb{R}}}
	\global\long\def\N{{\mathbb{N}}}
	\global\long\def\Z{{\mathbb{Z}}}
	\global\long\def\C{{\mathbb{C}}}
	\global\long\def\Q{{\mathbb{Q}}}

	\global\long\def\aut{\mathrm{Aut}}
	\global\long\def\supp{\mathrm{supp}}

	\global\long\def\eps{\varepsilon}

	\global\long\def\id{\mathrm{id}}

	\global\long\def\halpha{\widehat{\alpha}}
	\global\long\def\calpha{\widehat{\alpha}}

	\global\long\def\tih{\widetilde{h}}

	\global\long\def\opFol{\operatorname{F{\o}l}}

	\global\long\def\opRange{\operatorname{Range}}

	\global\long\def\opIso{\operatorname{Iso}}

	\global\long\def\dimnuc{\dim_{\operatorname{nuc}}}

	\global\long\def\set#1{\left\{  #1\right\}  }

	
	\global\long\def\mset#1{\left\{  \!\!\left\{  #1\right\}  \!\!\right\}  }

	\global\long\def\Ra{\Rightarrow}
	\global\long\def\La{\Leftarrow}
	\global\long\def\LRa{\Leftrightarrow}

	\global\long\def\quer{\overline{}}
	\global\long\def\eins{\mathbf{1}}
	\global\long\def\diag{\operatorname{diag}}
	\global\long\def\ad{\operatorname{Ad}}
	\global\long\def\ev{\operatorname{ev}}
	\global\long\def\fin{{\subset\!\!\!\subset}}
	\global\long\def\diam{\operatorname{diam}}
	\global\long\def\Hom{\operatorname{Hom}}
	\global\long\def\dst{{\displaystyle }}
	\global\long\def\spp{\operatorname{supp}}
	\global\long\def\spo{\operatorname{supp}_{o}}
	\global\long\def\del{\partial}
	\global\long\def\lsc{\operatorname{Lsc}}
	\global\long\def\GU{\CG^{(0)}}
	\global\long\def\HU{\CH^{(0)}}
	\global\long\def\AU{\CA^{(0)}}
	\global\long\def\BU{\CB^{(0)}}
	\global\long\def\CUU{\CC^{(0)}}
	\global\long\def\DU{\CD^{(0)}}
	\global\long\def\QU{\CQ^{(0)}}
	\global\long\def\TU{\CT^{(0)}}
	\global\long\def\CUUU{\CC'{}^{(0)}}
	
	\global\long\def\bA{\mathbb{A}}
	\global\long\def\AUl{(\CA^{l})^{(0)}}
	\global\long\def\BUl{(B^{l})^{(0)}}
	\global\long\def\HUp{(\CH^{p})^{(0)}}
	\global\long\def\sym{\operatorname{Sym}}
	
	\global\long\def\properlength{proper}

	\global\long\def\interior#1{#1^{\operatorname{o}}}

\author{Kang Li}

\address{K. Li: Department of Mathematics, Friedrich-Alexander-Universität Erlangen-Nürnberg,  Cauerstraße 11, 91058 Erlangen, Germany}
\email{kang.li@fau.de}

	\author{Xin Ma}
	
	\address{X. Ma: Department of Mathematics and Statistics, York University, Toronto, ON, Canada, M3J 1P3}
  \email{xma17@yorku.ca}	
	
	\subjclass[2020]{46L35, 37B05}

	\keywords{Almost finiteness, Almost finiteness in measure, Essentially freeness}
	
	\date{\today}

	\begin{abstract}
		In this paper, we study almost finiteness and almost finiteness in measure of non-free actions.  Let $\alpha:G\curvearrowright X$ be a minimal action of a locally finite-by-virtually $\Z$ group $G$ on the Cantor set $X$. We prove that under certain assumptions, the action $\alpha$  is almost finite in measure if and only if $\alpha$ is essentially free. As an application, we obtain that any minimal topologically free action of a virtually $\Z$ group on an infinite compact metrizable space with the small boundary property is almost finite. This is the first general result, assuming only topological freeness, in this direction, and these lead to new results on uniform property $\Gamma$ and $\CZ$-stability for their crossed product $C^*$-algebras. Some concrete examples of minimal topological free (but non-free) subshifts are provided.

	\end{abstract}
	\maketitle

 \section{Introduction}
In recent years, there has been an increasing acknowledgment of the profound interplay between the field of topological dynamical systems and $C^*$-algebras. Topological dynamical systems have emerged as a valuable source of examples and motivations for exploring $C^*$-algebras, particularly through the construction of crossed product $C^*$-algebras. In particular, many dynamical properties such as profinite-ness, mean dimension zero, small boundary property, almost finiteness (in measure), pure infiniteness, and certain geometric properties of acting groups like sub-exponential growth, and elementary amenability, have found applications in determining the useful structural property of the crossed products, e.g., $\CZ$-stability, which plays a central role in the study of the classification of nuclear simple separable $C^*$-algebras. We refer to, e.g., \cite{E-N}, \cite{D}, \cite{K-N}, \cite{K-S}, \cite{M1}, \cite{M-Wang}, and \cite{Niu}, for several recent developments in this direction.

Almost finiteness (Definition \ref{defn: af}) was first introduced in \cite{Matui} by Matui in the setting of ample \'{e}tale groupoids to study the homology theory of groupoids and their topological full groups and then refined by Kerr in the framework of dynamical systems $\alpha: G\curvearrowright X$ in \cite{D}, where $G$ is a countable amenable group and $X$ is an infinite compact metrizable space. This property, together with its weaker form, almost finiteness in measure introduced in \cite{K-S} (Definition \ref{defn: af in measure}), can be regarded as topological dynamical analogues of the well-known Ornstein-Weiss quasi-tilling theorem in ergodic theory (see, e.g., \cite[Theorem 4.46]{Kerr-L}). In \cite{D}, Kerr proved that almost finiteness of a minimal free action $\alpha: G\curvearrowright X$ implies that $C(X)\rtimes_r G$ is $\CZ$-stable (see Theorem \ref{thm: af Z stability}). In addition, it was demonstrated in \cite{K-S} that almost finiteness in measure, on the other hand, entails that the crossed product $C(X)\rtimes_r G$ possesses uniform property $\Gamma$ and thus satisfies the Toms-Winter conjecture via the results in \cite{C-E-T-W}.
Motivated by these, for an action $\alpha: G\curvearrowright X$  of an amenable group $G$ on an infinite compact metrizable space $X$,
it is natural to ask when the action $\alpha$ is almost finite or almost finite in measure.

On the other hand, the classification program of $C^*$-algebras (see Theorem \ref{thm: classification} below) deals with nuclear simple separable $C^*$-algebras. For a crossed product $C(X)\rtimes_r G$, it was demonstrated in \cite{AS} that $C(X)\rtimes_r G$ is nuclear and simple if and only if the action is minimal, topologically free and amenable. Note that all actions of an amenable group are amenable. Therefore, it seems that minimal topologically free actions from amenable groups are candidates with full potential to yield nuclear stably finite crossed products that fit the classification theorem.  In this context, the remaining question is verifying the almost finiteness of these actions. We remark that the small boundary property (Definition \ref{defn: SBP}), as a dynamical analogue for a zero-dimensional space, is a necessary condition of almost finiteness by \cite[Theorem 5.5]{K-S}. See also \cite[Proposition 3.8]{M1}. Therefore, the question above boils down to the following question.

\begin{ques}\label{ques2}
   Let $\alpha: G\curvearrowright X$ be a minimal topologically free action of an amenable group $G$ on an infinite compact metrizable space $X$ with the small boundary property. When is the action $\alpha$ almost finite or almost finite in measure?
\end{ques}

So far, there have been several results addressing Question \ref{ques2} under the assumption that the action is free. See \cite{D}, \cite{K-S}, \cite{K-N} and \cite{Niu}. In particular, it was demonstrated in \cite{K-N} that all minimal free actions of elementary amenable groups on finite-dimensional spaces are almost finite. However, it was shown in \cite{Joseph} that it is not the case if one looks at topologically free actions. To be more specific, it was shown in \cite{Joseph} that certain wreath products, like $\Z^d\wr \Z$, admit profinite topologically free but not essentially free (see Section \ref{sec2} for the definition) actions.  On the other hand, essential freeness is a necessary condition for almost finiteness, first observed in the groupoid setting in \cite[Remark 2.4]{O-S} whose authors attribute to Matui \cite[Remark 6.6]{Matui}. See also \cite[Lemma 2.2]{Joseph}. Therefore, such actions cannot be almost finite. See \cite{H-W} for more such exotic examples of actions with the same flavor, such as certain profinite actions of $G\wr H$, where $G$ is a non-trivial countable abelian residually finite group and $H$ is a countably infinite residually finite group.

A minimal topologically free action is called  \textit{allosteric} if it is not essentially free. A group $\Gamma$ is said to be allosteric if it admits allosteric actions.  So far several allosteric and non-allosteric examples have been found (see Remark \ref{rmk: allosteric} below). 

Therefore, motivated by the results above,
it is worth working on Question \ref{ques2} when the action $\alpha: G \curvearrowright X$ is essentially free even in the case that the acting group $G$ is elementary amenable and the case that the acting group is not allosteric. In this direction, for odometers of an amenable group $G$, it is demonstrated in \cite[Theorem 2.5]{O-S} that $\alpha: G\curvearrowright X$ is almost finite if and only if $\alpha$ is essentially free. It is also proved in \cite[Theorem 2.10]{O-S} that all minimal actions of the infinite dihedral group $D_\infty=\Z_2*\Z_2$ on the Cantor set are almost finite. However, it seems that there are no results on spaces with the infinite covering dimension, which is addressed in this paper. 

In general, without the genuine freeness of the action, the main difficulty in establishing almost finiteness and almost finiteness in measure is that it is not clear how to build disjoint towers with F{\o}lner shapes properly. To the best knowledge of authors, the only known way in \cite{O-S} establishing almost finiteness in the non-free setting depends on the specific structure of profinite actions. However,
we somehow overcome this by using certain permanence properties with respect to certain group extensions, which is the main novelty of the current paper. We first show in Proposition \ref{prop: af in measure imply ess free} that essentially freeness is even a necessary condition of almost finiteness in measure in general, which generalizes \cite[Remark 2.4]{O-S} and \cite[Lemma 2.2]{Joseph}. If we look at the case that the acting group is locally finite-by-virtually $\Z$, we have the following result on the equivalence of almost finiteness in measure with essentially freeness for certain actions.

\begin{customthm}{A}(Corollary \ref{cor: af in measure more general})\label{thm: main 1}
Let \[\begin{tikzcd}
0\arrow[r] & F \arrow[r, "i"] & G\arrow[r, "\rho"] & H\arrow[r] & 0
\end{tikzcd}\]
be an extension of countable discrete groups $F, H$, and $G$, in which $F$ is locally finite and $G$ is virtually $\Z$. Suppose $\alpha: G \curvearrowright X$ is a minimal action on 
the Cantor set $X$ such that $X/F$ is Hausdorff.  Then $\alpha$ is almost finite in measure if and only if it is essentially free.
\end{customthm}

We remark that the class of acting groups considered in Theorem \ref{thm: main 1} has some overlap, e.g. lamplighter group $\Z_2\wr \Z$, with the class of groups in \cite[Section 11]{H-W}, which allows profinite allosteric actions. This is one of our motivations for considering this class of groups. Note that \cite[Proposition 11.4, Remark 11.5]{H-W} showed that the crossed product $C(X)\rtimes_r (H\wr \Z)$ of profinite actions on the Cantor set $X$ has nuclear dimension one, in which $H$ is abelian, locally finite and residually finite. Thus, it is interesting to compare this with our Theorem \ref{thm: main 1} because the minimal essentially free action in Theorem \ref{thm: main 1} on the Cantor set of such a group, on the other hand,  yields a crossed product satisfying uniform property $\Gamma$ via Theorem \ref{thm: af in measure Gamma} (see Theorem \ref{thm: main3} below). 

However, if we consider groups with ``lower complexity'', i.e., virtually $\Z$ groups, then 
we may show almost finiteness instead of the ``in measure'' version even for actions on compact metrizable spaces with the small boundary property. We establish this through the fact that almost finiteness is equivalent to almost finiteness in measure together with dynamical comparison. This equivalence has been proved under assumptions of minimality or freeness in \cite{D} and \cite{K-S}. Actually, this equivalence holds in general, which might be already known to experts. However, it seems that it has not appeared anywhere in the literature. Therefore, for the reader's convenience, we include the proof in Propositions \ref{rmk: af in measure plus comparison equal af} and \ref{prop: af imply af in measure and comparison}. 

Moreover, since virtually $\Z$ groups are non-allosteric, we obtain almost finiteness for minimal topologically free actions on the space with the small boundary property. This answers Question \ref{ques2} in full strength for virtually $\Z$ groups, which is the main result in the current paper. We remark that to the best knowledge of the authors, our Theorem \ref{thm: main2} is the first general result, assuming only topological freeness, in this direction.

\begin{customthm}{B}(Corollary \ref{cor: virtually cyclic af})\label{thm: main2}
     Let $\alpha: G\curvearrowright X$ be a minimal topologically free action of a virtually $\Z$ group $G$ on an infinite compact metrizable space $X$ with the small boundary property.  Then $\alpha$ is almost finite.\end{customthm}

Note that our Theorem \ref{thm: main2} (together with Theorem \ref{thm: main3}(ii)) has extended \cite[the Theorem]{E-N} by Elliott-Niu from minimal (free) $\Z$-actions to virtually $\Z$ minimal topologically free actions.
On the other hand, our Theorem \ref{thm: main2} has also generalized the case of minimal $D_\infty$-action on the Cantor set in \cite[Theorem 2.10]{O-S} because all minimal $D_\infty$-actions are topologically free as shown in \cite[Proposition 2.6]{J} (see also \cite[Proposition 2.8]{O-S}). One may want to compare our Theorem \ref{thm: main2} with another approach to study the actions of virtually $\Z$ groups based on the dynamical asymptotic dimension by the first author in \cite{ALSS} and \cite{BL}. The following are the standard applications of Theorems \ref{thm: main 1} and \ref{thm: main2} to $C^*$-algebras.

\begin{customthm}{C}(Corollary \ref{cor: final})\label{thm: main3}
 \begin{enumerate}[label=(\roman*)]
    \item Let \[\begin{tikzcd}
0\arrow[r] & F \arrow[r, "i"] & G\arrow[r, "\rho"] & H\arrow[r] & 0
\end{tikzcd}\]
be an extension of countable discrete groups $F, H$, and $G$, in which $F$ is locally finite and $H$ is virtually $\Z$. Suppose $G \curvearrowright X$ is a minimal action on the Cantor set $X$ such that the quotient space $X/F$ is Hausdorff. Then $C(X)\rtimes_r G$ has uniform property $\Gamma$ and thus satisfies the Toms-Winter conjecture. 
    \item Suppose $\alpha: G\curvearrowright X$ is a minimal
    topologically free action of a 
    virtually $\Z$ group $G$ on an infinite compact metrizable space $X$ with the small boundary property, then $C(X)\rtimes_r G$ is $\CZ$-stable and thus classifiable by its Elliott invariant.    \end{enumerate}
\end{customthm}

In Section \ref{sec4}, we construct concrete examples of minimal topological free (but non-free) subshifts of $D_\infty$ and $D_\infty\times F$, where $F$ is a finite group, such that our Theorems \ref{thm: main2} and \ref{thm: main3} can be applied. These provide additional examples than odometers considered in \cite{O-S}. We end the introduction with the following conjecture motivated by \cite{K-N}, Theorem \ref{thm: main2} and Theorem \ref{thm: main3}.

\begin{conj}
    All minimal essentially free actions of elementary amenable groups on infinite compact metrizable spaces with the small boundary property are almost finite, whose crossed products are thus $\CZ$-stable.
\end{conj}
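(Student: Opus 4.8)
Since the final statement is a conjecture, what follows is a strategy rather than a complete argument. Because the elementary amenable groups form the smallest class containing all finite and abelian groups that is closed under subgroups, quotients, extensions, and directed unions, the natural framework is transfinite induction along Chou's elementary amenable hierarchy $\mathrm{EG}_\alpha$. The plan is to prove a single statement of the form ``every minimal essentially free action of a group in $\mathrm{EG}_\alpha$ on an infinite compact metrizable space with the small boundary property is almost finite'', and to push it from $\alpha$ to $\alpha+1$ and through limit ordinals. The $\CZ$-stability conclusion would then be automatic from almost finiteness by Kerr's theorem (Theorem \ref{thm: af Z stability}), so the entire weight of the conjecture rests on almost finiteness itself.

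\textbf{Base case.} Finite groups admit no minimal action on an infinite space, so the genuine base is the abelian case. Here I would first reduce a finitely generated abelian group to $\Z^n$ (modulo a finite kernel, which is harmless for essential freeness), and reduce a general abelian group to its finitely generated subgroups by a directed-union argument. For $\Z^n$ the free case on finite-dimensional spaces is \cite{K-N}; the remaining difficulty is to upgrade this to arbitrary small-boundary-property spaces and to the essentially free (rather than free) regime, presumably by combining the equivalence of almost finiteness with almost finiteness in measure plus comparison (Propositions \ref{rmk: af in measure plus comparison equal af} and \ref{prop: af imply af in measure and comparison}) with a small-boundary Ornstein--Weiss tiling.

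\textbf{Inductive step.} This is the heart of the matter and uses the two closure operations separately. For an extension $1\to N\to G\to Q\to 1$ with $N$ and $Q$ of strictly smaller rank, I would try to build clopen almost-finite towers for $\alpha\colon G\curvearrowright X$ that are fibered over almost-finite towers for the quotient action $Q\curvearrowright X/N$ (using, where needed, that $X/N$ is Hausdorff, or otherwise passing to the quotient groupoid), with the fibers controlled by the inductive hypothesis applied to $N$. This is exactly the permanence-under-extension philosophy driving Theorem \ref{thm: main 1}, now generalized from a virtually $\Z$ quotient to an arbitrary lower-rank quotient. For directed unions $G=\bigcup_i G_i$ I would attempt to transfer almost finiteness in measure to the limit, since F{\o}lner exhaustions pass to increasing unions, and then recover almost finiteness via dynamical comparison.

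\textbf{Main obstacle.} The decisive gap is the extension step when the quotient $Q$ is not virtually $\Z$. The construction behind Theorem \ref{thm: main 1} succeeds precisely because virtually $\Z$ groups are essentially one-dimensional: their F{\o}lner sets are intervals, so disjoint clopen towers can be laid down and lifted through the extension in a controlled manner. For a higher-complexity quotient such as $\Z^2$ the tiling combinatorics become far more rigid, and---crucially in the non-free setting---one must simultaneously keep the lifted towers disjoint and keep them away from the non-free locus, whose smallness is only guaranteed in measure and category by essential freeness. Producing a general ``induction of almost finiteness through extensions'' that tames the non-free locus over an arbitrary lower-rank amenable quotient is, I expect, the essential new ingredient the conjecture demands.
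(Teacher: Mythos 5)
The statement you were asked to prove is stated in the paper as an open conjecture; the paper offers no proof of it, so there is nothing to match your argument against. Your proposal is, as you say yourself, a strategy rather than a proof, and the decisive step you flag --- permanence of almost finiteness under extensions with an arbitrary lower-rank amenable quotient --- is exactly what is missing and what the paper does not supply either. The paper's extension machinery (Lemma \ref{lem: Folner set in extension} and Proposition \ref{prop: af in measure permanance}) leans hard on the kernel being locally finite: cosets of a finite subgroup tile the lifted F{\o}lner set exactly, the quotient $X/H$ can be arranged to be a Cantor system, and the non-free locus is excised by a single clopen set of small measure. None of this survives when the kernel is, say, $\Z$ and the quotient is $\Z$, so even the first nontrivial case beyond the paper ($\Z^2$ acting minimally, essentially freely but not freely) is untouched by your outline.

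Beyond the gap you acknowledge, there are several you do not. First, the Hausdorffness of $X/N$ is a genuine hypothesis that the paper cannot remove even in its own Theorem \ref{thm: main 1}; your plan to ``otherwise pass to the quotient groupoid'' is not an argument. Second, your directed-union step assumes that almost finiteness in measure passes to increasing unions and that dynamical comparison is then available to upgrade; but comparison is only known in the generality you need for groups of polynomial growth (the paper's citation of \cite{Na}), not for all elementary amenable groups, so the final upgrade via Proposition \ref{rmk: af in measure plus comparison equal af} is itself conjectural at limit stages. Third, your base case silently conflates the free and essentially free regimes: \cite{K-N} and \cite{K-S} give you the free case with the SBP, but a minimal essentially free $\Z^n$-action need not be free, and handling the non-free locus there already requires the kind of argument whose absence you identify as the main obstacle. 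In short, the proposal is a sensible roadmap consistent with the paper's philosophy, but it does not constitute a proof, and the conjecture remains open.
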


 \section{Preliminaries}\label{sec2}
 In this section, we recall backgrounds and some standard facts. Throughout the paper, we only consider countable amenable discrete group $\Gamma$ and infinite compact Hausdorff space $X$, which is usually assumed to be metrizable. In addition, all actions $\alpha: \Gamma\curvearrowright X$ are assumed to be actions by homeomorphisms. 
 However, in the context of crossed product $C^*$-algebras, especially when uniform property $\Gamma$ is involved, we denote $G$ for the acting group. 
 
 Let $\alpha: \Gamma\curvearrowright X$ be an action. We denote by $M_\Gamma(X)$ for the set of all $\Gamma$-invariant Borel probability measures on $X$.  Let  $K$ be a finite set in $\Gamma$ and $\epsilon>0$. A finite set $S\subset \Gamma$ is said to be $(K, \epsilon)$-\textit{F{\o}lner} if $|KS\setminus S|<\epsilon|S|$. An action $\alpha: \Gamma\curvearrowright X$ is said to be \textit{minimal} if all orbits are dense in $X$. Such an $\alpha$ is said to be \textit{free} if the stabilizer group $\operatorname{stab}(x)$ for each $x\in X$ is trivial.  An action $\alpha$ is said to be \textit{topologically free} (resp. \textit{essentially free}) if for any non-trivial $s\neq e$, the fixed point set $F_s=\{x\in X: sx=x\}$ is nowhere dense (resp. satisfies $\sup_{\mu\in M_\Gamma(X)}\mu(F_s)=0$). For minimal actions, it is straightforward to see that an essentially free action is topologically free. On the other hand, a minimal action is said to be \textit{allosteric} if it is topologically free but not essentially free. A countable group $\Gamma$ is called \textit{allosteric} if it admits an allosteric action. 
 
 \begin{rmk}\label{rmk: allosteric}
     So far, many non-allosteric groups have been found in the literature.
     For example, it was proved in \cite[Corollary 2.4]{Jo2} that if a group $\Gamma$ has only countably many subgroups, then $\Gamma$ is non-allosteric. 
     This, in particular, applies to virtually $\Z$ groups. See  \cite{Joseph} and \cite{H-W} for more information on allosteric and non-allosteric groups.  
         
     \end{rmk}

We now recall a comparison property in dynamical systems. Throughout the paper, we write ``$\bigsqcup$'' and ``$\sqcup$'' for disjoint unions.

\begin{defn}\cite[Definition 3.1, 3.2]{D}\label{defn: comparison}
    Let $\alpha: \Gamma\curvearrowright X$ be an action. Let $K$ be a closed set, $O, U$ non-empty open sets in $X$.
    \begin{enumerate}[label=(\roman*)]
        \item We write $K\prec O$ if there exists open sets $V_1, \dots, V_n$ in $X$ and $s_1,\dots, s_n\in \Gamma$ such that $K\subset \bigcup_{i=1}^nV_i$ and $\bigsqcup_{i=1}^ns_iV_i\subset O$.
        \item We denote by $U\prec O$ if $K\prec O$ holds for any closed $K\subset U$.
        \item We say $\alpha$ has \textit{dynamical comparison} if $U\prec O$ whenever $\mu(U)<\mu(O)$ holds for any $\mu\in M_\Gamma(X)$.
    \end{enumerate}
\end{defn}
 
 The following notions are fundamental in the study of topological dynamical systems and ergodic theory.

\begin{defn}
Let $\alpha: \Gamma\curvearrowright X$ be an action and $S\subset \Gamma$ finite and $B$ a set in $X$. A pair $\CT=(S, B)$ is called a \textit{tower} if  $\{sB: s\in S\}$ is a disjoint family in which $S$ is called the \textit{shape} and $B$ is called the \textit{base} of $\CT$. We say the tower $\CT$ is open (resp. closed) if the base $B$ is open (resp. closed).
\end{defn}

\begin{defn}
Let $\alpha: \Gamma\curvearrowright X$ be an action and $\CC=\{\CT_i=(S_i, B_i): i\in I\}$ an finite family of towers. We say $\CC$ is a \textit{castle} if each pair of $\CT_i$ and $\CT_j$ are disjoint in the sense that $sB_i\cap tB_j=\emptyset$ for any $s\in S_i, t\in T_j$ whenever $i\neq j\in I$. We also say $\CC$ is open (resp. closed) if all towers $\CT_i$ in $\CC$ are open (resp. closed).
\end{defn}

 We now recall the definition of almost finiteness and almost finiteness in measure.
 
 \begin{defn}\label{defn: af}\cite[Definition 8.2]{D}
     Let $\alpha: \Gamma\curvearrowright X$ be an action of an amenable group $\Gamma$. The action $\alpha$ is said to be almost finite if, for any finite set $K\subset \Gamma$, integer $n\in \N$ and $\epsilon>0$, there exists an open castle $\CC=\{(S_i, B_i): i\in I\}$ such that 
     \begin{enumerate}[label=(\roman*)]
         \item each shape $S_i$ is $(K, \epsilon)$-F{\o}lner;
         \item the diameter $\diam(sB_i)<\epsilon$ for any $s\in S_i$ and $i\in I$;
         \item for each $i\in I$ there exists $S'_i\subset S_i$ with $|S'_i|<(1/n)|S_i|$ such that one has $X\setminus \bigsqcup_{i\in I}S_iB_i\prec \bigsqcup_{i\in I}S'_iB_i$
     \end{enumerate}
 \end{defn}

 \begin{rmk}\label{rmk: zero dimensional af}
     We remark that when $X$ is zero-dimensional, with the help of \cite[Theorem 10.2]{D}, one refines Definition \ref{defn: af} above in the following sense.
     \begin{enumerate}[label=(\roman*)]
         \item The castle $\CC$ is clopen.
         \item  The remainder of the castle is empty, i.e.,  $X\setminus \bigsqcup_{i\in I}S_iB_i=\emptyset$.     \end{enumerate}
 \end{rmk}

\begin{defn}\cite[Definition 3.5]{K-S}\label{defn: af in measure}
 Let $\alpha: \Gamma\curvearrowright X$ be an action of an amenable group $\Gamma$. The action $\alpha$ is said to be almost finite in measure if, for any finite set $K\subset \Gamma$, and $\epsilon>0$, there exists an open castle $\CC=\{(S_i, B_i): i\in I\}$ such that 
     \begin{enumerate}[label=(\roman*)]
         \item each shape $S_i$ is $(K, \epsilon)$-F{\o}lner;
         \item the diameter $\diam(sB_i)<\epsilon$ for any $s\in S_i$ and $i\in I$;
         \item we have $\sup_{\mu\in M_\Gamma(X)}\mu(X\setminus \bigsqcup_{i\in I}S_iB_i)<\epsilon$.
     \end{enumerate}
     Similarly to Remark \ref{rmk: zero dimensional af}, in the zero-dimensional setting, one may ask the castle $\CC$ to be clopen.
\end{defn}

It is not hard to see that if an (not necessarily free) action $\alpha$ is almost finite in measure and has dynamical comparison, then $\alpha$ is almost finite. For the reader's convenience, we include the proof here.


\begin{prop}\label{rmk: af in measure plus comparison equal af}
   Let $\alpha: \Gamma\curvearrowright X$ be an action of an infinite amenable group $\Gamma$ on a compact metrizable space $X$. Suppose $\alpha$ is almost finite in measure and has dynamical comparison. Then $\alpha$ is almost finite.
\end{prop}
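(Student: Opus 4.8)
The plan is to start from a castle that witnesses almost finiteness in measure and to upgrade its measure‑small remainder into a genuine $\prec$‑subequivalence by feeding the resulting measure gap into dynamical comparison. Fix the data $K\fin\Gamma$, $n\in\N$ and $\epsilon>0$ for which Definition~\ref{defn: af} must be verified. Set $\beta=\tfrac{1}{2n}$ and choose an auxiliary tolerance $\epsilon'<\min\{\epsilon,\ \beta/(1+\beta)\}$. Enlarging $K$ to a finite $K'\supseteq K$ with $|K'|$ large (possible since $\Gamma$ is infinite) and applying almost finiteness in measure (Definition~\ref{defn: af in measure}) for the pair $(K',\epsilon')$ and the diameter bound $\epsilon$, I obtain an open castle $\CC=\{(S_i,B_i):i\in I\}$ whose shapes are $(K,\epsilon)$‑F{\o}lner, satisfy $\diam(sB_i)<\epsilon$, and whose remainder $R:=X\setminus\bigsqcup_i S_iB_i$ obeys $s:=\sup_{\mu\in M_\Gamma(X)}\mu(R)<\epsilon'$. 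The reason for enlarging $K$ is the elementary bound $|S_i|>|K'|/(1+\epsilon')$ valid for any $(K',\epsilon')$‑F{\o}lner set; taking $|K'|$ large forces $|S_i|>4n$, so the half‑open interval $[\beta|S_i|,\tfrac1n|S_i|)$ has length $\tfrac{1}{2n}|S_i|>1$ and hence contains an integer $m_i$.

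Next I carry out the combinatorial and measure‑theoretic bookkeeping. For each $i$ I pick $S_i'\subseteq S_i$ with $|S_i'|=m_i$, so that $\beta|S_i|\le|S_i'|<\tfrac1n|S_i|$, and set $O:=\bigsqcup_i S_i'B_i$, a nonempty open subset of the castle; condition (iii) of Definition~\ref{defn: af} is then exactly the assertion $R\prec O$. Using $\Gamma$‑invariance of $\mu$ and disjointness of the castle, I compute, for every $\mu\in M_\Gamma(X)$,
\[
\mu(O)=\sum_i |S_i'|\,\mu(B_i)\ \ge\ \beta\sum_i |S_i|\,\mu(B_i)=\beta\,\mu\Big(\bigsqcup_i S_iB_i\Big)=\beta\big(1-\mu(R)\big)\ \ge\ \beta(1-s).
\]
Since $s<\beta/(1+\beta)$ is equivalent to $s<\beta(1-s)$, this produces a uniform gap $\inf_{\mu}\mu(O)\ge\beta(1-s)>s=\sup_{\mu}\mu(R)$.

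Finally I convert this measure gap into the comparison $R\prec O$. As $R$ is closed whereas dynamical comparison (Definition~\ref{defn: comparison}) is phrased for open sets, I first produce an open $U$ with $R\subseteq U$ and $\sup_{\mu}\mu(U)<\beta(1-s)$. This is the only genuinely non‑combinatorial step, and I expect it to be the main obstacle: it rests on a standard regularity‑plus‑compactness argument. For a decreasing sequence of open neighborhoods $U_k\downarrow R$ one has $\mu(U_k)\downarrow\mu(R)$ pointwise; upper semicontinuity of $\mu\mapsto\mu(\overline{U_k})$ on closed sets, together with weak$^*$‑compactness of $M_\Gamma(X)$, upgrades this to the uniform convergence $\sup_\mu\mu(U_k)\to s$, and picking any $c\in(s,\beta(1-s))$ yields an admissible $U=U_k$. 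With such a $U$ in hand, $\mu(U)\le\sup_\nu\nu(U)<\beta(1-s)\le\mu(O)$ for every $\mu\in M_\Gamma(X)$, so dynamical comparison gives $U\prec O$. Since $R$ is a closed subset of $U$, Definition~\ref{defn: comparison}(ii) yields $R\prec O$, which is precisely condition (iii); conditions (i) and (ii) are inherited verbatim from $\CC$, completing the verification of almost finiteness.
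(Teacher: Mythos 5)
Your proposal is correct and follows essentially the same route as the paper's proof: enlarge $K$ and shrink the tolerance so that every F{\o}lner shape is large enough to admit a subset $S_i'$ of proportional size below $\tfrac1n|S_i|$, bound $\mu\bigl(\bigsqcup_i S_i'B_i\bigr)$ from below uniformly over $M_\Gamma(X)$, fatten the closed remainder to an open set of uniformly small measure, and invoke dynamical comparison. The only cosmetic difference is that you prove the fattening step (uniform smallness of $\sup_\mu\mu(U_k)$ via upper semicontinuity and weak$^*$-compactness) inline, whereas the paper cites it as \cite[Lemma 3.2]{M1}, and your constants ($\beta=\tfrac1{2n}$ versus $\tfrac1{n+1}$) differ immaterially.
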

\begin{proof}
    Let finite $K\subset \Gamma$, $\epsilon>0$ and $n\in \N_+$ be given. Choose another finite $K'\subset \Gamma$ with $K\subset K'$ and $|K'|> 2n(n+1)$. Write $\delta=\min\{\epsilon, \frac{1}{n+2}\}$ for simplicity.
    Note that our choice of $K'$ and $\delta$ implies that any $(K', \delta)$-F{\o}lner set $S$ is also $(K, \epsilon)$-F{\o}lner.    
    Moreover, any such $S$ satisfies that $|K'S|\leq (1+\delta)|S|$ and thus one has
    \[|S|\geq \frac{|K'S|}{1+\delta}\geq \frac{|K'|}{2}> n(n+1),\]
    which implies that 
    \[\frac{|S|}{n}-\frac{|S|}{n+1}> 1.\]
    Therefore, there is an integer $m_S\in (\frac{|S|}{n+1}, \frac{|S|}{n})$ for any $(K', \delta)$-F{\o}lner set $S$.
    
 Now, because $\alpha$ is almost finite in measure, for $K'$ and $\delta$ above, there exists an open castle $\CC=\{(S_i, B_i): i\in I\}$ such that 
    \begin{enumerate}[label=(\roman*)]
         \item each shape $S_i$ is $(K', \delta)$-F{\o}lner and thus $(K, \epsilon)$-F{\o}lner;
         \item the diameter $\diam(sB_i)<\delta\leq \epsilon$ for any $s\in S_i$ and $i\in I$;
         \item we have $\sup_{\mu\in M_\Gamma(X)}\mu(X\setminus \bigsqcup_{i\in I}S_iB_i)<\delta$.
     \end{enumerate}
 Now, for each $i\in I$, by our choice of $K'$ and $\delta$, each $(K', \delta)$-F{\o}lner shape $S_i$ contains a subset $S'_i\subset S_i$ such that 
 \[\frac{1}{n+1}|S_i|< |S'_i|< \frac{1}{n}|S_i|.\]
 Write $O=\bigsqcup_{i\in I}S'_iB_i$ for simplicity and for any $\mu\in M_\Gamma(X)$, one has 
 \[\mu(O)=\sum_{i\in I, s\in S'_i}\mu(sB_i)> \frac{1}{n+1}\sum_{i\in I, s\in S_i}\mu(sB_i)=\frac{1}{n+1}\mu(\bigsqcup_{i\in I}S_iB_i)> \frac{1-\delta}{n+1}.\]
 We write $F=X\setminus \bigsqcup_{i\in I}S_iB_i$, which is a closed set satisfying $\mu(F)<\delta$ for any $\mu\in M_\Gamma(X)$. Then \cite[Lemma 3.2]{M1} implies that there exists an open set $U$ containing $F$ such that $\sup_{\mu\in M_\Gamma(X)}\mu(U)<\delta$. This further implies that
 
 \[\mu(U)<\delta< \frac{1-\delta}{n+1}<\mu(O)\]
 for any $\mu\in M_\Gamma(X)$ by our choice of $\delta$. Now because $\alpha$ is assumed to have dynamical comparison, one has \[X\setminus \bigsqcup_{i\in I}S_iB_i=F\prec O=\bigsqcup_{i\in I}S'_iB_i.\]
 Then take properties (i) and (ii) for $\{(S_i, B_i): i\in I\}$ above into consideration, one has $\alpha$ is almost finite.
 \end{proof}

 Even though we do not use it in this paper, it is worth mentioning that the converse direction of Proposition \ref{rmk: af in measure plus comparison equal af} is also true. The proof is similar to \cite[Theorem 9.2]{D}, which means that assumptions of minimality and freeness there can be dropped. This has been used implicitly in \cite{K-S}. Also for the convenience of the reader, we provide a proof here. 

 \begin{prop}\label{prop: af imply af in measure and comparison}
     Let $\alpha: \Gamma\curvearrowright X$ be an action of an infinite amenable group $\Gamma$ on a compact metrizable space $X$. Suppose $\alpha$ is almost finite. Then it is almost finite in measure and has dynamical comparison. 
     \end{prop}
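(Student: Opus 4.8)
The plan is to prove the two assertions separately: almost finiteness in measure is a quick consequence of the defining property of almost finiteness, while dynamical comparison follows the scheme of \cite[Theorem 9.2]{D}, the point being to verify that neither minimality nor freeness is actually used. For the first, given a finite $K\subseteq\Gamma$ and $\epsilon>0$, I would pick $n\in\N$ with $1/n<\epsilon$ and apply almost finiteness to the triple $(K,n,\epsilon)$, obtaining an open castle $\{(S_i,B_i):i\in I\}$ whose shapes are $(K,\epsilon)$-F{\o}lner, whose levels have diameter $<\epsilon$, and which carries subsets $S_i'\subseteq S_i$ with $|S_i'|<(1/n)|S_i|$ and $X\setminus\bigsqcup_iS_iB_i\prec\bigsqcup_iS_i'B_i$. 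The only extra input is that any $\Gamma$-invariant measure is monotone under $\prec$ — if $L\prec O$ then $\mu(L)\le\mu(O)$, directly from the disjointness and invariance in Definition \ref{defn: comparison} — so that
\[
\sup_{\mu\in M_\Gamma(X)}\mu\Big(X\setminus\bigsqcup_iS_iB_i\Big)\le\sup_{\mu}\mu\Big(\bigsqcup_iS_i'B_i\Big)=\sup_\mu\sum_i|S_i'|\mu(B_i)\le\tfrac1n<\epsilon .
\]
This is exactly condition (iii) of Definition \ref{defn: af in measure}, and (i), (ii) are inherited, so $\alpha$ is almost finite in measure.

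For dynamical comparison, suppose $U,O$ are open with $\mu(U)<\mu(O)$ for every $\mu\in M_\Gamma(X)$; I must show $K\prec O$ for each closed $K\subseteq U$. Since $\mu\mapsto\mu(O)-\mu(K)$ is lower semicontinuous and strictly positive on the weak$^*$-compact set $M_\Gamma(X)$, it has a positive minimum $\eta$. The central input I would prove is a \emph{uniform counting estimate}: there exist a finite $F\subseteq\Gamma$ and $\delta,\beta>0$ such that for every $(F,\delta)$-F{\o}lner set $S$ and every $x\in X$,
\[
\big|\{s\in S: sx\in\tilde K\}\big|+\beta|S|\le\big|\{s\in S: sx\in O'\}\big|,
\]
where $\tilde K$ is a closed neighbourhood of $K$ and $O'$ an open set with $\overline{O'}\subseteq O$. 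This is proved by contradiction via empirical measures $\tfrac1{|S|}\sum_{s\in S}\delta_{sx}$: a family violating the estimate along an exhausting F{\o}lner net has a weak$^*$ cluster point that is $\Gamma$-invariant (here amenability alone is used, not freeness or minimality) and satisfies $\nu(\tilde K)+\beta\ge\nu(O')$, contradicting a uniform gap for the pair $(\tilde K,O')$.

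Producing such $\tilde K,O'$ while keeping a \emph{uniform} gap is the delicate regularisation step: writing $O=\bigcup_mO_m$ with $\overline{O_m}\subseteq O_{m+1}$ and $K=\bigcap_m\tilde K_m$ with closed neighbourhoods, the functions $\mu\mapsto\mu(O_m)-\mu(\tilde K_m)$ are lower semicontinuous and increase pointwise to a function bounded below by $\eta$, so a Dini/compactness argument over $M_\Gamma(X)$ selects a single index giving $\mu(O')-\mu(\tilde K)\ge\eta/4$ for all $\mu$. Next I would apply almost finiteness with the finite set $F$, with $n$ so large that $1/n<\beta$, and with level-diameter $\epsilon_0$ small enough (relative to $d(K,\tilde K^{c})$ and $d(\overline{O'},O^{c})$) that, fixing a base point $x_i\in B_i$, one has $sB_i\cap K\neq\emptyset\Rightarrow sx_i\in\tilde K$ and $sx_i\in O'\Rightarrow sB_i\subseteq O$. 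Setting $A_i=\{s:sB_i\cap K\neq\emptyset\}$ and $C_i=\{s:sB_i\subseteq O\}$, the counting estimate yields $|A_i|+|S_i'|\le|A_i|+\beta|S_i|\le|C_i|$, so one may fix injections $A_i\hookrightarrow C_i$ and $S_i'\hookrightarrow C_i$ with disjoint ranges; no marriage argument is needed, since there is no adjacency constraint linking a level to its partner.

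Translating each level $sB_i$ with $s\in A_i$ onto its partner level inside $O$ then realises $K\cap\bigsqcup_iS_iB_i\prec O$ with images forming a disjoint family in $O$ — disjoint within a tower by injectivity and across towers by the castle property, again with no appeal to freeness. The partners reserved for $S_i'$ give $\bigsqcup_iS_i'B_i\prec O$ into the complementary room, and transitivity of $\prec$ upgrades the remainder bound $X\setminus\bigsqcup_iS_iB_i\prec\bigsqcup_iS_i'B_i$ to $X\setminus\bigsqcup_iS_iB_i\prec O$; combining the two comparisons with the finite subadditivity and transitivity of $\prec$ recorded in \cite[Section 3]{D} yields $K\prec O$, and letting $K\subseteq U$ vary gives $U\prec O$. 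I expect the main obstacle to be precisely the uniform counting estimate together with this regularisation — securing genuine slack $\beta>0$ and a closed/open pair on which the gap survives the enlargement of $K$ and the shrinking of $O$; once $\beta$ dominates the relative remainder size $1/n$, absorbing the remainder into $O$ is routine.
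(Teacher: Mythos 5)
Your proposal is correct and follows essentially the same route as the paper: the first half is the identical one-line observation that invariant measures are monotone under $\prec$, and the second half is the same adaptation of \cite[Theorem 9.2]{D} that the paper carries out — regularizing the measure gap to a closed/open pair, establishing the uniform F{\o}lner counting inequality via weak$^*$ limits of empirical measures, and then using the castle from almost finiteness with injections of disjoint ranges (your $A_i\hookrightarrow C_i$, $S_i'\hookrightarrow C_i$ are the paper's $\varphi_i$, $\psi_i$) so that the levels meeting $K$ and the remainder, routed through condition (iii), land in disjoint target levels inside $O$. The only cosmetic difference is that you re-derive the ingredients (Kerr's Lemma 3.3 and the counting estimate) that the paper simply cites, and your appeal to ``subadditivity and transitivity of $\prec$'' is justified precisely because you reserve disjoint target levels, which is the explicit disjoint union written out at the end of the paper's proof.
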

\begin{proof}
    Suppose $\alpha$ is almost finite. It is direct to see that $\alpha$ is almost finite in measure. This is mainly because for any closed set $F$, non-empty open set $O$ and $\mu\in M_\Gamma(X)$, the condition $F\prec O$ implies $\mu(F)\leq \mu(O)$.
    
    Thus, it suffices to show $\alpha$ has dynamical comparison. This means that it suffices to establish $A\prec B$ whenever $A$ is a non-empty closed set and $B$ is a non-empty open set satisfying $\mu(A)<\mu(B)$ for any $\mu\in M_\Gamma(X)$.
    
    First, \cite[Lemma 3.3]{D} implies that there exists a $\delta>0$ such that the sets $B_-=\{x\in X: d(x, X\setminus B)>\eta\}$ and $A_+=\{x\in X: d(x, A)\leq \eta\}$ satisfying $\mu(A_+)+\eta\leq \mu(B_+)$ for any $\mu\in M_\Gamma(X)$. Then as in \cite[Theorem 9.2]{D}, there exists a finite $K\subset \Gamma$ and a $\delta>0$ such that for any $x\in X$ and $(K, \delta)$-F{\o}lner set $F\subset \Gamma$, one has
    \begin{align*}\label{eq}
        \frac{1}{|F|}\sum_{s\in F}1_{A_+}(sx)+\frac{\eta}{2}\leq \frac{1}{|F|}\sum_{s\in F}1_{B_-}(sx),\tag{$\star$}   
        \end{align*}
    where $1_{A_+}$ and $1_{B_-}$ denote characteristic functions on $A_+$ and $B_-$, respectively.
Now, since $\alpha$ is almost finite, for the $K, \delta, \eta$ above and an integer $n>3/\eta$, there exists an open castle $\CC=\{(S_i, V_i): i\in I\}$ such that 
 \begin{enumerate}[label=(\roman*)]
         \item each shape $S_i$ is $(K, \delta)$-F{\o}lner;
         \item the diameter $\diam(sV_i)<\eta$ for any $s\in S_i$ and $i\in I$;
         \item for each $i\in I$ there exists $S'_i\subset S_i$ with $|S'_i|<(1/n)|S_i|$ such that one has $X\setminus \bigsqcup_{i\in I}S_iV_i\prec \bigsqcup_{i\in I}S'_iV_i$.
     \end{enumerate}
     For $i\in I$, define $S_{i, 1}=\{s\in S_i: sV_i\cap A\neq \emptyset\}$ and $S_{i, 2}=\{s\in S_i: sV_i\cap B_-\neq \emptyset\}$. By our choice of $\delta$, one has that $sV_i\subset A_+$ for any $s\in S_{i, 1}$ and $sV_i\subset B$ for any $s\in S_{i, 2}$.
     
     In addition, choosing an $x_i\in V_i$ for each $i\in I$, the inequality (\ref{eq}) above implies that 
    \[\frac{|S_{i, 1}|}{|S_i|}+\frac{\eta}{2}\leq \frac{|S_{i, 2}|}{|S_i|}\]
   for each $i\in I$ and our choice of $n>2/\eta$ and condition (iii) further imply 
   \[|S_{i,1}|+|S'_i|<|S_{i ,2}|.\]
   Therefore, for each $i\in I$, it allows us to define two injective maps $\varphi_i: S_{i, 1}\to S_{i, 2}$ and $\psi_i: S'_i\to S_{i,2}$ with disjoint ranges, i.e., $\varphi_i(S_{i, 1})\cap \psi_i(S'_i)=\emptyset$.
   
   For $X\setminus \bigsqcup_{i\in I}S_iV_i\prec \bigsqcup_{i\in I}S'_iV_i$, the same argument in the last paragraph of \cite[(i)$\Rightarrow$(ii) of Theorem 9.2]{D} shows that there exists an open cover $\CW$ of $X\setminus \bigsqcup_{i\in I}S_iV_i$ and $t_W\in \Gamma$ for each $W\in \CW$ such that $\{t_WW: W\in \CW\}$ is disjoint and each $t_WW$ is contained in an $sV_i$ for some $i\in I$ and $s\in S'_i$.  
Now, one has  
\[A\subset (\bigsqcup_{i\in I}S_{i, 1}V_i)\cup (X\setminus \bigsqcup_{i\in I}S_iV_i)\subset (\bigsqcup_{i\in I, s\in S_{i, 1}}sV_i)\cup \bigcup\CW\]
and the disjoint union
\[\bigsqcup_{i\in I, s\in S_{i, 1}}(\varphi_i(s)s^{-1})sV_i\sqcup \bigsqcup_{i\in I, s\in S'_i}\bigsqcup\{\psi_i(s)s^{-1}t_WW: t_WW\subset sV_i\}\]
is a subset of $\bigsqcup_{i\in I}S_{i, 2}V_i\subset B$. Thus, we have $A\prec B$.
\end{proof}

The \textit{small boundary property}, abbreviated as the SBP as usual, for an action, originated in the work \cite{S-W} of Shub and Weiss and played an important role in later work on the mean dimension of actions. (see, e.g., \cite{L-W}). 

\begin{defn}\label{defn: SBP}
    An action $\Gamma\curvearrowright X$ is said to have the small boundary property (the SBP) if for any $x\in X$ and open set $U\ni x$, there exists an open neighborhood $V$ of $x$ such that $x\in V\subset U$ and $\mu(\partial V)=0$ holds for any $\mu\in M_\Gamma(X)$.  
\end{defn}

It was proved in \cite{K-S} in the context of free actions that the SBP is equivalent to almost finiteness in measure.

\begin{thm}\cite[Theorem 5.6]{K-S}
    Let $\alpha: \Gamma\curvearrowright X$ be a free action. Then $\alpha$ has the SBP if and only if $\alpha$ is almost finite in measure.
\end{thm}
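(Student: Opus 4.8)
The statement splits into a necessity part (almost finiteness in measure $\Rightarrow$ SBP) and a sufficiency part (SBP $\Rightarrow$ almost finiteness in measure), and the freeness hypothesis is needed only for the latter. The plan is to treat the sufficiency part as the main construction and to obtain necessity by a softer argument. Throughout, the two external inputs are the Ornstein--Weiss quasitiling theorem for the amenable group $\Gamma$ and the null-boundary neighborhoods supplied by the SBP.

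For the sufficiency direction, fix a finite $K \subset \Gamma$ and $\epsilon > 0$. First I would use the SBP together with compactness of $X$ to produce a finite Borel partition $\mathcal P = \{P_1, \dots, P_r\}$ of $X$ into pieces of diameter $< \epsilon$ with $\mu(\partial P_l) = 0$ for every $\mu \in M_\Gamma(X)$; this is the standard consequence of the SBP that every point has a neighborhood basis of null-boundary sets, patched over a finite subcover and then closed up under Boolean combinations, whose boundaries remain $\mu$-null. Next, apply the Ornstein--Weiss quasitiling theorem to obtain finitely many $(K,\epsilon)$-F{\o}lner shapes $S_1, \dots, S_n \subset \Gamma$ that $\epsilon$-almost tile $\Gamma$ (equivalently, almost every orbit) with $\epsilon$-disjoint translates of density $> 1 - \epsilon$. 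Using freeness, I would convert this group-level tiling into a dynamical castle: each orbit is cut into tiles that are genuine translates of the $S_i$, so the sets of tile base-points define Borel bases $\widetilde B_i$ for which $\{(S_i, \widetilde B_i)\}$ is a Borel castle whose complement has small invariant measure. Refining the bases by $\mathcal P$ forces $\diam(s\widetilde B_i) < \epsilon$, and finally, because all the relevant boundaries are $\mu$-null for every invariant $\mu$, I would replace the Borel bases by open sets (passing to interiors, or to slightly enlarged open neighborhoods) without increasing the sup-measure of the remainder beyond $\epsilon$. The resulting open castle satisfies the three conditions of Definition \ref{defn: af in measure}.

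For the necessity direction, suppose $\alpha$ is almost finite in measure; here freeness plays no essential role, consistent with the SBP being a general necessary condition. Given $x \in X$ and an open $U \ni x$, I would take open castles with pieces of diameter shrinking to $0$ and remainders of sup-measure shrinking to $0$. The small-diameter open castle pieces exhibit the system as having mean dimension zero, and for a free action mean dimension zero is equivalent to the SBP by the Lindenstrauss--Weiss theory and its amenable-group extensions, which I would invoke to conclude. Alternatively one argues directly that the castle pieces provide, around $x$, open neighborhoods contained in $U$ whose boundaries lie in the (small sup-measure) boundary set of the castle, and a limiting argument over the shrinking castles, using the invariance identity $\mu(\partial(sB_i)) = \mu(\partial B_i)$, pushes the boundary measure to $0$ uniformly over $M_\Gamma(X)$.

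The main obstacle is the dynamical realization step in the sufficiency direction: turning the abstract Ornstein--Weiss tiling of $\Gamma$ into an \emph{open} castle with F{\o}lner shapes, small-diameter bases, and a remainder of small \emph{sup}-measure simultaneously. Freeness is exactly what guarantees that distinct group translates $sB_i$ remain disjoint as dynamical translates, so that the group tiling descends to honestly disjoint towers; and the SBP is exactly what allows the Borel bases to be upgraded to open ones without inflating the remainder's measure, uniformly across all invariant measures. Controlling this uniformity over the weak$^*$ compact set $M_\Gamma(X)$ while keeping every tower open is the technical crux, and it is precisely the place where removing freeness (the concern of the present paper) becomes delicate.
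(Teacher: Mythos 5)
Your outline assembles the right ingredients, but the two steps you yourself flag as the crux are precisely where the argument has genuine gaps, and nothing you write fills them. First, the Ornstein--Weiss quasitiling theorem is a statement about tiling F{\o}lner subsets of $\Gamma$; to get a castle whose remainder satisfies $\sup_{\mu\in M_\Gamma(X)}\mu(\cdot)<\epsilon$ you need the remainder to have small upper density along F{\o}lner sets in \emph{every} orbit simultaneously, not just small measure for one fixed $\mu$ at a time, and ``the sets of tile base-points define Borel bases \dots whose complement has small invariant measure'' asserts exactly this without an argument. Second, the upgrade from Borel to open bases does not follow from the SBP as you use it: a Borel base produced by a measurable selection of tiling centres need not have null boundary, passing to interiors may annihilate it, and enlarging to open neighbourhoods may destroy disjointness of the towers. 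Boundary control comes for free only if the bases are finite Boolean combinations of translates $s^{-1}P_l$ of the null-boundary partition, i.e.\ if the tiling decision at a point depends only on its $\mathcal{P}$-itinerary over a finite window --- and making that precise is exactly what the cited proof (\cite[Theorem 5.6]{K-S}, recalled in Proposition \ref{prop: minimal SBP extension} and Corollary \ref{cor: SBP af in measure}) accomplishes by passing to a zero-dimensional extension $(Z,\gamma)$ measure-isomorphic over singleton fibers: there the castle can be taken clopen, the boundary issue disappears, the orbitwise density bound yields the uniform measure bound, and the castle is pushed back down to $X$. Your direct route, once repaired, essentially reconstructs that extension.

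For the necessity direction, the appeal to ``mean dimension zero is equivalent to the SBP for free actions'' is not available at this level of generality: only SBP $\Rightarrow$ mean dimension zero is known in general, and the converse is established only under additional hypotheses, so it cannot be invoked here. Your alternative sketch is the correct idea: for an open castle the boundary of each level $sB_i$ is contained in the closed remainder $X\setminus\bigsqcup_{i\in I}S_iB_i$ (an open level disjoint from $sB_i$ cannot meet $\overline{sB_i}$), so every point covered by the castle acquires a small neighbourhood whose boundary has sup-measure $<\epsilon$. But you still must handle points lying in the remainder and pass from this $\epsilon$-approximate small-boundary condition to genuinely null boundaries, and neither step is addressed.
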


We finally recall some applications of almost finiteness and almost finiteness in measure in the structure theory of $C^*$-algebras. We denote by $\CZ$ the \textit{Jiang-Su} algebra. A $C^*$-algebra $A$ is said to be $\CZ$-stable if $A\otimes \CZ\simeq A$. We refer to \cite{C-E-T-W} for the definition of \textit{uniform property} $\Gamma$ and its application to the Toms-Winter conjecture.

\begin{thm}\cite[Theorem 12.4]{D}\label{thm: af Z stability}
  Let $\alpha: \Gamma\curvearrowright X$ be a minimal free almost finite action. Then the crossed product $C(X)\rtimes_r \Gamma$ is $\CZ$-stable.  
\end{thm}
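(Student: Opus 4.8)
The plan is to convert the dynamical data supplied by almost finiteness—open castles with F{\o}lner shapes—into approximately central finite-dimensional structure inside the crossed product $A = C(X)\rtimes_r \Gamma$, and then to invoke a central-sequence characterisation of $\CZ$-stability. Recall (Matui--Sato, Hirshberg--Orovitz, R\o rdam) that a unital separable C*-algebra $A$ is $\CZ$-stable as soon as for every $n\in\N$ there is a completely positive contractive order zero map $\phi\colon M_n\to A_\infty\cap A'$ into the norm central sequence algebra whose defect satisfies $1-\phi(1_{M_n})\precsim \phi(e_{11})$ in Cuntz subequivalence. Thus the entire problem reduces to manufacturing such maps, and the castles are exactly the raw material.

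First I would fix $n$, feed a large finite $K\subset\Gamma$ and small $\epsilon>0$ into Definition \ref{defn: af}, and obtain an open castle $\CC=\{(S_i,B_i):i\in I\}$ with each $S_i$ being $(K,\epsilon)$-F{\o}lner, with $\diam(sB_i)<\epsilon$, and with the remainder controlled by condition (iii). For each tower the canonical unitaries $u_s$ ($s\in S_i$) together with the base projection $1_{B_i}\in C(X)$ produce matrix units $u_s 1_{B_i} u_t^*$, which span a copy of $M_{|S_i|}(C(B_i))$ inside $A$; here freeness of $\alpha$ is what prevents these systems from degenerating and is what will later let me identify the dynamical relation $\prec$ with honest Cuntz subequivalence in $A$. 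Partitioning each $S_i$ into $n$ roughly equal blocks turns these tower algebras into a candidate order zero map out of $M_n$. The diameter condition forces the blocks to approximately commute with $C(X)$, while the F{\o}lner condition forces them to be approximately fixed by each generating unitary $u_k$; hence, after letting $\epsilon\to 0$, the map will land in $A_\infty\cap A'$.

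The decisive step is controlling the defect $1-\phi(1_{M_n})$. Dynamically this corresponds to the uncovered remainder $X\setminus\bigsqcup_i S_i B_i$ together with the fractional leftover produced by splitting $|S_i|$ into $n$ blocks and by the F{\o}lner boundary. Condition (iii), namely $X\setminus\bigsqcup_i S_iB_i\prec \bigsqcup_i S_i' B_i$ with $|S_i'|<(1/n)|S_i|$, is engineered precisely so that this defect is dominated, in the sense of $\prec$, by a reserved block of the image. Using freeness to realise the open sets and group translates witnessing $\prec$ as contractions in $A$, I would upgrade this to the Cuntz subequivalence $1-\phi(1_{M_n})\precsim \phi(e_{11})$ required by the criterion.

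Finally I would run the construction along $K_m\nearrow\Gamma$ and $\epsilon_m\searrow 0$ and pass to a diagonal reindexing to place the order zero maps in the central sequence algebra with the Cuntz domination preserved, concluding $\CZ$-stability. The step I expect to be the main obstacle is the decisive one: matching the remainder and the boundary losses to the reserved sub-towers $S_i'$ uniformly over all $i$ (and, through the comparison, over all invariant measures) while simultaneously keeping the matrix units approximately central. Balancing this quantitative interplay between the F{\o}lner property, the diameter control, and the comparison estimate (iii) is the technical heart of the argument.
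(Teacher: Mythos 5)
This theorem is not proved in the paper at all --- it is imported verbatim from Kerr \cite[Theorem 12.4]{D}, so there is no internal proof to compare against. Your sketch is a faithful outline of Kerr's actual argument (castles with F{\o}lner shapes $\to$ matrix units $u_s 1_{B_i} u_t^*$ $\to$ order zero maps $M_n\to C(X)\rtimes_r\Gamma$, with condition (iii) and freeness converting $\prec$ into Cuntz subequivalence to control the defect), the only cosmetic difference being that Kerr packages the conclusion as tracial $\CZ$-stability in the sense of Hirshberg--Orovitz (using minimality and freeness for simplicity and nuclearity) rather than as maps into $A_\infty\cap A'$.
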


\begin{thm}\cite[Theorem 9.4, Corollary 9.5]{K-S}\label{thm: af in measure Gamma}
Let $\alpha: G\curvearrowright X$ be a minimal free action that is almost finite in measure. Then the crossed product $C(X)\rtimes_r G$ has uniform property $\Gamma$ and thus satisfies the Toms-Winter conjecture.
\end{thm}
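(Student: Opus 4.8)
The plan is to verify uniform property $\Gamma$ directly from its definition in \cite{C-E-T-W}: for each $k\in\N$ one must produce pairwise orthogonal projections $p_1,\dots,p_k$ in the uniform tracial ultrapower $A^\omega\cap A'$, where $A=C(X)\rtimes_r G$, with $\sum_{j}p_j=1$ and $\tau(p_j a)=\tfrac1k\tau(a)$ for every limit trace $\tau$ and every $a\in A$. By a standard reindexing argument it suffices to produce, for each finite $F\fin A$, each $\eps>0$ and each $k$, pairwise orthogonal projections $p_1,\dots,p_k\in A$ satisfying $\|\sum_j p_j-1\|_{2,u}<\eps$ and $\|[p_j,a]\|_{2,u}<\eps$ for $a\in F$ in the uniform $2$-norm $\|b\|_{2,u}=\sup_{\tau\in T(A)}\tau(b^*b)^{1/2}$, with $|\tau(p_j)-\tfrac1k|<\eps$ uniformly in $\tau$. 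Here I use that for a minimal free action the trace simplex $T(A)$ is canonically identified with $M_G(X)$, that each $\tau_\mu$ restricted to $C(X)$ is integration against $\mu$ while $\tau_\mu(fu_g)=0$ for $g\neq e$ by freeness, and hence that a projection lying in $C(X)$ has $\tau_\mu$-value equal to the $\mu$-measure of its support. I may take $F$ inside a generating set consisting of $C(X)$ together with $\{u_g:g\in K\}$ for a suitable finite $K\subset G$.

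First I would feed $K$ and a small $\delta$ (fixed later) into almost finiteness in measure to obtain an open castle $\CC=\{(S_i,B_i):i\in I\}$ whose shapes $S_i$ are $(K,\delta)$-F\o lner, whose levels have diameter $<\delta$, and whose remainder $X\setminus\bigsqcup_i S_iB_i$ has $\sup_\mu\mu(\cdot)<\delta$. When $X$ is zero-dimensional I take the bases clopen, so the level sets yield honest projections; for general $X$ I replace $1_{B_i}$ by continuous approximants supported in $B_i$ and use that, the SBP holding, the invariant measures give the level boundaries null measure, allowing me to lift to genuine projections in the uniform tracial ultrapower. The projections are then built level-wise: I set $p_j=\sum_i\sum_{s\in S_i^{(j)}}1_{sB_i}\in C(X)$, where $S_i=\bigsqcup_{j=1}^k S_i^{(j)}$ is a partition of each shape still to be chosen. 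Orthogonality is immediate from disjointness of the level sets, and $\sum_j p_j=1_{\bigsqcup_i S_iB_i}$, whose complement has uniformly small measure, gives $\|\sum_j p_j-1\|_{2,u}^2<\delta$. Commutation with $C(X)$ is automatic since every $p_j$ lies in $C(X)$.

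The decisive step is to choose the partitions $S_i=\bigsqcup_j S_i^{(j)}$ so that each class is simultaneously balanced, $\big||S_i^{(j)}|-|S_i|/k\big|$ small relative to $|S_i|$, and approximately left-$K$-invariant, $\sum_{g\in K}|gS_i^{(j)}\triangle S_i^{(j)}|<\eps'|S_i|$. That this is possible for sufficiently invariant F\o lner $S_i$ is exactly the Ornstein--Weiss quasi-tiling phenomenon of which almost finiteness is the dynamical shadow: one tiles the bulk of $S_i$ by translates of a fixed medium-scale F\o lner set and distributes the tiles among the $k$ colours with nearly equal total cardinality, so that all colour boundaries are absorbed into the (small) tile boundaries. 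Granting such partitions, the trace computation $\tau_\mu(p_j)=\sum_i|S_i^{(j)}|\,\mu(B_i)\approx\tfrac1k\sum_i|S_i|\,\mu(B_i)=\tfrac1k\mu(\bigsqcup_i S_iB_i)$ lands within $\eps$ of $1/k$ uniformly in $\mu$, using balance and the small remainder. For the commutation with $u_g$, $g\in K$, I note that $u_gp_ju_g^*$ and $p_j$ are both projections in $C(X)$, so $\|[u_g,p_j]\|_{2,\mu}^2=\mu\big(\supp(u_gp_ju_g^*)\,\triangle\,\supp(p_j)\big)\le\sum_i|gS_i^{(j)}\triangle S_i^{(j)}|\,\mu(B_i)$ by $G$-invariance of $\mu$, and this is $<\eps'$ by the approximate invariance of the partition. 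Choosing $\delta,\eps',K$ appropriately in terms of $k$ and $\eps$ closes all the estimates.

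I expect the main obstacle to be precisely this balanced, approximately-invariant partitioning of the F\o lner shapes (the quasi-tiling lemma), together with the bookkeeping needed to make every one of the $k$ commutator and trace estimates hold \emph{uniformly over the whole trace simplex} $M_G(X)$ rather than for a single measure; the reindexing into $A^\omega\cap A'$ and, in the non-zero-dimensional case, the replacement of indicator functions by continuous approximants with null-boundary control via the SBP are the remaining technical points. Once uniform property $\Gamma$ is established, the Toms--Winter consequence follows immediately from \cite{C-E-T-W}.
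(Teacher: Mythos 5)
First, a structural point: the paper contains no proof of this statement --- it is imported verbatim from Kerr--Szab\'o \cite[Theorem 9.4, Corollary 9.5]{K-S}, with Remark \ref{ess free} only recording the cosmetic substitution of ``almost finite in measure'' for the SBP (equivalent for free actions by \cite[Theorem 5.6]{K-S}) and the relaxation to essential freeness. So the comparison is really with the cited source, and your outline does follow that strategy: castles from almost finiteness in measure, level-set projections, a balanced approximately invariant partition of the F\o lner shapes, and a reindexing into the uniform tracial ultrapower. Your commutator estimate $\|[u_g,p_j]\|_{2,\mu}^2=\mu\bigl(gE_j\,\triangle\,E_j\bigr)\le\sum_i|gS_i^{(j)}\triangle S_i^{(j)}|\,\mu(B_i)$ and the vanishing of $\tau_\mu(p_jfu_g)$ for $g\neq e$ are correct.

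There is, however, a genuine gap beyond the partition lemma you already flag. Uniform property $\Gamma$ demands $\tau(p_ja)=\tfrac1k\tau(a)$ for \emph{every} $a\in A$ and every limit trace, and your finite-stage reduction replaces this by the strictly weaker requirement $|\tau(p_j)-\tfrac1k|<\eps$. Centrality of $p_j$ together with $\tau(p_j)=\tfrac1k$ recovers the full condition only for \emph{extreme} traces (where $a\mapsto k\tau(p_jap_j)$ is a tracial state dominated by $k\tau$), and one cannot in general push this through the ergodic decomposition and the ultralimit simultaneously when the action is not uniquely ergodic. Concretely, writing $E_j=\bigsqcup_iS_i^{(j)}B_i$, you must show $\int_{E_j}f\,d\mu\approx\tfrac1k\int_Xf\,d\mu$ for each $f$ in a prescribed finite subset of $C(X)$, uniformly over $M_G(X)$; since the levels $sB_i$ have diameter $<\delta$, this amounts to $\sum_{s\in S_i^{(j)}}f(sx_i)\approx\tfrac1k\sum_{s\in S_i}f(sx_i)$, i.e.\ the colour classes must equidistribute the \emph{$f$-weights} along each tower and not merely the cardinalities. (Taking $S_i^{(j)}$ to be the first $|S_i|/k$ levels of a $\Z$-tower can fail this for suitable $f$ when several ergodic measures are present.) Thus the quasi-tiling step has to balance, with error $o(|S_i|)$, the cardinality \emph{and} the finitely many weight functions $s\mapsto f(sx_i)$ simultaneously, while keeping each class approximately $K$-invariant --- a finite-dimensional balancing of tile weight vectors over the Ornstein--Weiss tiles. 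This is exactly where the small-diameter condition on the levels, which your sketch records but never uses, enters the argument. With that strengthened partition lemma in hand, the remaining estimates in your proposal close as claimed, and the Toms--Winter consequence is indeed immediate from \cite{C-E-T-W}.
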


\begin{rmk}\label{ess free}
   In the statement of Theorem \ref{thm: af in measure Gamma}, we have replaced the original assumption on the SBP in \cite[Theorem 9.4]{K-S} by almost finiteness in measure for convenience.  This implies that 
   the assumption of freeness of the actions in Theorem both \ref{thm: af Z stability} and \ref{thm: af in measure Gamma} can be dropped. On the other hand, the freeness of the action is also used implicitly in the proof \cite[Theorem 9.4]{K-S} that $\tau=\tau\circ E$ for any trace $
   \tau\in T(C(X)\rtimes \Gamma)$, where $E$ is the canonical conditional expectation. But this is still true for essentially free actions. See e.g. \cite[Theorem 2.7]{KTT}, \cite[Corollary 1.2]{Ne} and \cite[Theorem A]{LZ24} for the groupoid case. We will show below in Proposition \ref{prop: af in measure imply ess free} that essential freeness is a necessary condition of almost finiteness in measure.
   
   See also more general versions of these theorems in \cite{M-W} and \cite{M} in the framework of \'{e}tale groupoids.\end{rmk}

To end this section, we record the following result by many hands on the classification of $C^*$-algebras using Elliott invariant, i.e., the K-theory together with tracial information. See, e.g., \cite{EGLN}, \cite{GLN}, \cite{TWW}, \cite{CETWW} and \cite{CGSTW}.

\begin{thm}\label{thm: classification}
    Let $A$ be a unital simple separable nuclear stably finite $\CZ$-stable $C^*$-algebra satisfying the UCT.  Then $A$ is classifiable by its Elliott invariant.
    \end{thm}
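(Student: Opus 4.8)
The plan is to treat this as the culmination of the Elliott classification program and to follow the abstract existence--uniqueness strategy, since the statement is not proved here but synthesized from the cited works. I would show that the Elliott invariant functor $A \mapsto \mathrm{Ell}(A) = (K_0(A), K_0(A)^+, [1_A], K_1(A), T(A), \rho_A)$, where $\rho_A$ records the pairing of traces with $K_0$, is a \emph{classification functor} on the category of unital simple separable nuclear stably finite $\CZ$-stable $C^*$-algebras satisfying the UCT; that is, every isomorphism of invariants is induced by a $*$-isomorphism. Via Elliott's approximate intertwining argument this reduces to two theorems about $*$-homomorphisms between two such algebras $A$ and $B$: an \textbf{existence theorem}, that every morphism $\mathrm{Ell}(A) \to \mathrm{Ell}(B)$ is induced by a $*$-homomorphism $A \to B$, and a \textbf{uniqueness theorem}, that two $*$-homomorphisms inducing the same morphism on $\mathrm{Ell}$ are approximately unitarily equivalent (simplicity of $A$ makes every nonzero map full, so fullness hypotheses are automatic).

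For the existence theorem I would exploit the UCT to compute $\mathrm{KK}(A,B)$ from the $K$-groups, so that a prescribed $K$-theoretic datum lifts to a $\mathrm{KK}$-class, and then match the compatible map on trace simplices with that class. The decisive analytic input here is the quasidiagonality of faithful traces for separable nuclear UCT $C^*$-algebras (Tikuisis--White--Winter), which is precisely what makes the stably finite tracial data realizable by honest $*$-homomorphisms rather than merely at the $\mathrm{KK}$-level. For the uniqueness theorem, the crucial ingredient is the strong regularity forced by $\CZ$-stability: through the Toms--Winter program one obtains strict comparison and, most importantly, \emph{finite nuclear dimension} (Castillejos--Evington--Tikuisis--White--Winter). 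This regularity furnishes enough room to absorb perturbations, so that a map is determined up to approximate unitary equivalence by its total $K$-theory together with its effect on traces; the argument proceeds through the trace-kernel extension and a systematic use of $\CZ$-stability to interpolate between the $K$-theoretic and tracial pictures.

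With existence and uniqueness in hand, Elliott's intertwining produces, from an isomorphism $\mathrm{Ell}(A) \cong \mathrm{Ell}(B)$, a sequence of $*$-homomorphisms whose limit is a $*$-isomorphism $A \cong B$. The historical route instead first classified simple unital inductive limits of subhomogeneous algebras (Elliott--Gong--Lin--Niu, Gong--Lin--Niu) and then identified every algebra in the class with such a limit, whereas the abstract route (Carrión--Gabe--Schafhauser--Tikuisis--White) bypasses the model algebras and proves existence and uniqueness directly at the level of the invariant.

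The main obstacle, by a wide margin, is the uniqueness theorem and the structural regularity it rests on. Concretely, the two hardest pieces are that $\CZ$-stability implies finite nuclear dimension, and that the relevant faithful traces are quasidiagonal under the UCT; each was a landmark result on its own, and together they are exactly what upgrade the formal intertwining machinery into an actual classification. Everything else---the UCT computation of $\mathrm{KK}$, the bookkeeping of total $K$-theory and the trace simplex, and the intertwining itself---is comparatively routine once this regularity is available.
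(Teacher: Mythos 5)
The paper gives no proof of this theorem: it is recorded as a known result ``by many hands'' and simply cites \cite{EGLN}, \cite{GLN}, \cite{TWW}, \cite{CETWW} and \cite{CGSTW}. Your outline is an accurate synthesis of exactly those cited works --- the existence/uniqueness-plus-intertwining strategy, the role of the UCT, quasidiagonality of traces, and the regularity supplied by $\CZ$-stability --- so it matches the paper's (purely citational) treatment and correctly identifies where the real difficulty lies.
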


\section{Almost finiteness and Almost finiteness in measure}
In this section, we prove the theorems mentioned in the introduction. We first recall the construction in \cite[Theorem 5.5]{K-S}, which states that for any $\alpha: \Gamma\curvearrowright X$ with the SBP, there is a zero-dimensional extension  $\pi: (Z, \gamma)\to (X, \alpha)$ that is \textit{measure-isomorphic over singleton fibers} in the sense of \cite[Definition 4.2]{K-S}.
For simplicity, we refer the reader to \cite[Section 5]{K-S} for all necessary related details on the construction of $(Z,\gamma)$, which has a similar flavor to odometers. Recall that a closed set $C$ is said to be \textit{regular closed} if $C=\overline{\operatorname{int}(C)}$.
Based on \cite[Theorem 5.5]{K-S}, we have the following observation.
 \begin{prop}\label{prop: minimal SBP extension}
   Let  $\alpha: \Gamma\curvearrowright X$ be a minimal action on an infinite compact metrizable space $X$ with the SBP. Then there is a minimal extension $\pi: (Z, \gamma)\to (X, \alpha)$, which is measure-isomorphic over singleton fibers such that $Z$ is the Cantor set. 
 \end{prop}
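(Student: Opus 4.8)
The plan is to invoke the zero-dimensional extension from \cite[Theorem 5.5]{K-S} and then upgrade its conclusion to minimality and to the concrete statement that $Z$ is the Cantor set. First I would apply \cite[Theorem 5.5]{K-S} directly to the action $\alpha:\Gamma\curvearrowright X$, which has the SBP by hypothesis. This yields a zero-dimensional compact metrizable space $Z$, an action $\gamma:\Gamma\curvearrowright Z$, and a factor map $\pi:(Z,\gamma)\to(X,\alpha)$ that is measure-isomorphic over singleton fibers in the sense of \cite[Definition 4.2]{K-S}. The remaining content is therefore twofold: to check that $\gamma$ is minimal, and to identify $Z$ as the Cantor set, i.e.\ to verify that $Z$ has no isolated points.

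For minimality, the key observation is that $\pi$ is a factor map onto the minimal system $(X,\alpha)$, and I would show that the measure-isomorphism-over-singleton-fibers property forces $\gamma$ itself to be minimal. Concretely, suppose $Y\subseteq Z$ is a nonempty closed $\gamma$-invariant set. Then $\pi(Y)$ is a nonempty closed $\alpha$-invariant subset of $X$, so by minimality of $\alpha$ we get $\pi(Y)=X$; thus $\pi|_Y:Y\to X$ is again a surjective factor map. The point of the singleton-fibers condition is that $\pi$ is injective off a set that is null for every invariant measure and meager; since $\pi$ is a homeomorphism on the comeager set of singleton fibers, any proper closed invariant $Y$ would have to omit an open set, contradicting that $\pi(Y)=X$ together with the fact that $Y$ carries the preimages of all invariant measures. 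I would extract a minimal subsystem if necessary, but the cleaner route is to note that the construction in \cite[Section 5]{K-S} is itself an inverse-limit (odometer-like) construction built over towers coming from a refining sequence of partitions, and inverse limits of minimal systems along equivariant factor maps are minimal; so minimality of $\gamma$ is inherited directly from minimality of $\alpha$ at each finite stage.

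To see that $Z$ is the Cantor set, I would use that $Z$ is compact, metrizable, and zero-dimensional (all from \cite[Theorem 5.5]{K-S}), so by Brouwer's characterization it suffices to rule out isolated points. Here minimality of $(Z,\gamma)$ does the work: a minimal action on a space with an isolated point forces $Z$ to be a single finite orbit, hence finite. But $Z$ surjects onto the infinite space $X$ via $\pi$, so $Z$ is infinite. Therefore $Z$ has no isolated points, and being a nonempty compact metrizable zero-dimensional perfect space, it is homeomorphic to the Cantor set.

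The main obstacle I expect is the minimality step, specifically making the heuristic ``a proper closed invariant $Y$ is ruled out by the singleton-fibers condition'' into a rigorous argument, since measure-isomorphism over singleton fibers is a statement about invariant \emph{measures} and a generic (comeager) set of fibers, not literally about every fiber. The safest path is to lean on the explicit inverse-limit structure of the \cite[Section 5]{K-S} construction and prove minimality of $\gamma$ stage by stage, rather than trying to deduce it abstractly from the fiber condition; once minimality is in hand, the identification of $Z$ with the Cantor set is routine via the no-isolated-points argument above.
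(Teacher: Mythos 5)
Your outline matches the paper's: invoke \cite[Theorem 5.5]{K-S} for the zero-dimensional extension, prove minimality of $\gamma$, and then deduce that $Z$ is the Cantor set because a minimal zero-dimensional compact metrizable system with an isolated point would be finite, while $Z$ surjects onto the infinite space $X$. That last step is exactly the paper's argument and is fine. The gap is in the minimality step, which is the entire content of the proposition, and neither of your two routes closes it. The abstract route via singleton fibers you already flag as heuristic, and rightly so: measure-isomorphism over singleton fibers controls invariant measures and a generic set of fibers, and a proper closed invariant $Y\subseteq Z$ with $\pi(Y)=X$ is not obviously excluded by it (the preimage in $Z$ of the set of singleton-fiber points need not be dense, since $\pi(U)$ need not be open for $U$ open in $Z$, and nothing forces $Y$ to meet every open set). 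The ``cleaner route'' also does not apply as stated: the construction in \cite[Section 5]{K-S} is not an inverse limit of minimal $\Gamma$-systems. The finite stages are the finite partitions $\mathscr{L}_n$ of $X$, on which $\Gamma$ does not act; the refinement relation between $\alpha_s(\mathscr{L}_{n+1})$ and $\mathscr{L}_n$ is only available for $s\in F_n$, so there is no equivariant inverse system of minimal actions whose limit you could take.

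What is actually needed, and what the paper does, is a direct verification of minimality using the explicit coordinates of $Z$: given $z=(C_n)_n\in Z$ with $\bigcap_n C_n=\{x\}$ and a basic open set $U$ determined by a chain $D_1\supset\cdots\supset D_k$ with $D_n\in\mathscr{L}_n$, use minimality of $\alpha$ to find $s\in\Gamma$ with $s\cdot x\in\operatorname{int}(D_k)$ (this interior is nonempty precisely because the partitions are regular closed), then choose $k_0>k$ with $s\in F_{k_0}$ and track $\gamma_s(z)=(r^s_n(\alpha_s(C_{n+1})))_n$ through the refinement maps to see that its first $k$ coordinates are $D_1,\dots,D_k$, i.e.\ $\gamma_s(z)\in U$. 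Your proposal names the right target but does not supply this argument, so as written the minimality claim --- and hence the proposition --- remains unproven.
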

 \begin{proof}
Let $\{F_n: n\in \N\}$ be an increasing sequence for $\Gamma$ with $\bigcup_{n\in \N}F_n=\Gamma$ and $e\in F_n=F^{-1}_n$. We refer to \cite[Theorem 5.5]{K-S} for all notions, such as nested regular closed partitions (in the sense of \cite[Definition 5.2]{K-S}) $\mathscr{L}_n$ of $X$ constructed from $F_n$ for $n\in \N$  and necessary details of the construction of $Z$ and $\gamma$ from this sequence of partitions $\mathscr{L}_n$. The fact we will use below is that for any $n\in\N$ and $s\in F_n$, the partition $\alpha_s(\mathscr{L}_{n+1})$ refines $\mathscr{L}_n$ in the sense of \cite[Remark 5.3]{K-S}

We first note that if the action $\gamma:\Gamma\curvearrowright Z$ is minimal, then $Z$ has to be a Cantor set. Suppose not, let $z\in Z$ be an isolated point, i.e., $\{z\}$ is open in $Z$. Then, the minimality of $\gamma$ implies that for any $z'\in Z$, there exists an $s\in \Gamma$ such that $sz'\in \{z\}$, which implies that $\Gamma\cdot \{z\}=Z$. This  entails that $Z$ is finite by compactness, which is a contradiction to the fact that $Z$ is infinite as there is a subjective map $\pi: Z\to X$.
 
Thus, it suffices to show the action $\gamma: \Gamma\curvearrowright Z$ in \cite[Theorem 5.5]{K-S} is minimal whenever the action $\alpha$ is minimal. To this end,   let $z=(C_n)_n\in Z$ with $\bigcap_{n\in\N}C_n=\{x\}$. Let $U$ be an open set in $Z$. Without loss of any generality, one may assume \[U=\{(A_n)_n\in Z: A_n=D_n\text{ for all }1\leq n\leq k\}\]
     for some decreasing regular closed sets $D_1\supset D_{2}\supset\dots \supset D_k$ such that each $D_n\in \mathscr{L}_n$. Now since $\alpha$ is minimal, there exists an $s\in \Gamma$ such that $s\cdot x\in \operatorname{int}(D_k)\subset D_k$. This can be done because each partition $\mathscr{L}_n$ of $X$ is a regular closed partition and 
     the interior $\operatorname{int}(D_k)$ of $D_k$ is thus non-empty. Therefore, let $k_0\in \N$ be large enough such that $s\in F_{k_0}$ and $k_0> k$.  Note that $\alpha_s(\mathscr{L}_{k_0+1})$ refines $\mathscr{L}_{k_0}$ and thus refines $\mathscr{L}_k$. This entails that $\alpha_s(C_{k_0+1})\subset D_k$ because there is only one $D\in \mathscr{L}_k$ such that $\alpha_s(C_{k_0+1})\subset D$ by \cite[Remark 5.2]{K-S}. By definition of $\gamma$, one has \[\gamma(z)=(r^s_n(\alpha_s(C_{n+1})))_{n\geq k_0},\] where $r^s_n: \alpha_s(\mathscr{L}_{n+1})\to \mathscr{L}_n$ is the \textit{refinement map} in the sense of \cite[Remark 5.2]{K-S}.  One then has that the first $k$ coordinates of $\gamma(z)$ are exactly $D_n$ for $1\leq n\leq k$, which implies that $\gamma(z)\in U$. Thus $\gamma: \Gamma\curvearrowright Z$ is minimal.
      \end{proof}

      The same proof of \cite[Theorem 5.6]{K-S} yields the following.

      \begin{cor}\label{cor: SBP af in measure}
        Let  $\alpha: \Gamma\curvearrowright X$ be a minimal action on the space $X$ with the SBP. Suppose the minimal Cantor extension $\gamma: \Gamma\curvearrowright Z$ of $\alpha$ described in Proposition \ref{prop: minimal SBP extension} is almost finite. Then  $\alpha$ is almost finite in measure. 
      \end{cor}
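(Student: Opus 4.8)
The plan is to transport an almost-finite-in-measure castle from the Cantor extension $Z$ down to $X$ along the factor map $\pi$, exploiting crucially that $\pi$ is measure-isomorphic over singleton fibers. First, since $\gamma$ is almost finite, Proposition \ref{prop: af imply af in measure and comparison} shows that $\gamma$ is almost finite in measure, and because $Z$ is the Cantor set we may (by the last sentence of Definition \ref{defn: af in measure}) arrange the castles to be clopen. Recall from \cite[Definition 4.2]{K-S} that there is a $\gamma$-invariant Borel set $X_0\subseteq X$ with $\mu(X_0)=1$ for every $\mu\in M_\Gamma(X)$ and $|\pi^{-1}(x)|=1$ for all $x\in X_0$; writing $Z_0=\pi^{-1}(X_0)$, one gets a $\gamma$-invariant set with $\nu(Z_0)=1$ for every $\nu\in M_\Gamma(Z)$, and the pushforward $\pi_*\colon M_\Gamma(Z)\to M_\Gamma(X)$ is a bijection, in particular surjective.

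Fix a finite $K\subseteq\Gamma$ and $\epsilon>0$. By uniform continuity of $\pi$ choose $\delta>0$ so that $d_Z(z,z')<\delta$ implies $d_X(\pi z,\pi z')<\epsilon/2$. Applying almost finiteness in measure of $\gamma$ to $K$ and $\min\{\epsilon,\delta\}$ yields a clopen castle $\CC=\{(S_i,B_i):i\in I\}$ on $Z$ whose shapes $S_i$ are $(K,\epsilon)$-F{\o}lner, with $\diam(sB_i)<\delta$ for all $s\in S_i$, and with $\sup_{\nu\in M_\Gamma(Z)}\nu\bigl(Z\setminus\bigsqcup_{i\in I}S_iB_i\bigr)<\epsilon$. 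Now set $V_i=X\setminus\pi(Z\setminus B_i)$. Since $\pi$ is a continuous map from a compact space to a Hausdorff space it is closed, so each $V_i$ is open; moreover $V_i=\{x\in X:\pi^{-1}(x)\subseteq B_i\}$, whence $\pi^{-1}(V_i)\subseteq B_i$ and, by surjectivity of $\pi$, $V_i=\pi(\pi^{-1}(V_i))\subseteq\pi(B_i)$. I keep the same shapes and propose $\{(S_i,V_i):i\in I\}$ as the desired open castle on $X$.

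The F{\o}lner condition is immediate since the shapes are unchanged. For disjointness, note $\pi^{-1}(sV_i)=s\,\pi^{-1}(V_i)\subseteq sB_i$; if $sV_i\cap tV_j\neq\emptyset$ for $(i,s)\neq(j,t)$, then a point of $\pi^{-1}$ of a common point would lie in $sB_i\cap tB_j=\emptyset$, a contradiction, so $\{(S_i,V_i)\}$ is indeed an open castle. For diameters, $sV_i\subseteq s\,\pi(B_i)=\pi(sB_i)$ together with $\diam(sB_i)<\delta$ and the choice of $\delta$ give $\diam(sV_i)\leq\epsilon/2<\epsilon$.

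The remaining, and genuinely delicate, point is the measure estimate, where the singleton-fiber structure is essential, since $\pi$ need not be open and multi-point fibers could otherwise collapse the bases. For $x\in X_0$ the fiber is a single point, so $x\in V_i$ if and only if that point lies in $B_i$; hence $\pi^{-1}(V_i)\cap Z_0=B_i\cap Z_0$, and applying $s$ and using $\gamma$-invariance of $Z_0$ gives $\pi^{-1}(S_iV_i)\cap Z_0=(S_iB_i)\cap Z_0$ for each $i$. Given $\mu\in M_\Gamma(X)$, lift it to $\nu\in M_\Gamma(Z)$ with $\pi_*\nu=\mu$. Since $\nu(Z_0)=1$,
\[
\mu\Bigl(X\setminus\bigsqcup_{i\in I}S_iV_i\Bigr)=\nu\Bigl(Z\setminus\pi^{-1}\bigl(\textstyle\bigsqcup_{i\in I}S_iV_i\bigr)\Bigr)=\nu\Bigl(Z\setminus\bigsqcup_{i\in I}S_iB_i\Bigr)<\epsilon,
\]
the bound being uniform in $\mu$. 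Thus $\{(S_i,V_i):i\in I\}$ witnesses almost finiteness in measure of $\alpha$. The main obstacle throughout is precisely this last step: transferring the smallness of the remainder across a non-open factor map, which is overcome by passing to the full-measure set $Z_0$ on which $\pi$ is genuinely bijective.
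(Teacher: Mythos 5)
Your proof is correct and takes essentially the same route as the paper, whose entire argument is the citation ``the same proof of \cite[Theorem 5.6]{K-S} yields the following'': that proof is precisely your transfer step, pushing the castle down via $V_i=X\setminus\pi(Z\setminus B_i)=\{x:\pi^{-1}(x)\subseteq B_i\}$ and controlling the remainder on the invariant full-measure set of singleton fibers. (Only cosmetic simplification available: since $Z$ is zero-dimensional, Remark \ref{rmk: zero dimensional af} already gives a clopen castle with empty remainder, which makes the final measure estimate immediate.)
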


      It was proven in \cite[Theorem 2.10]{O-S} that any minimal action of the infinite dihedral group $D_\infty$ on the Cantor set is almost finite. Therefore, we have the following result as a warm-up, which has generalized \cite[Theorem 2.10]{O-S}.

      \begin{cor}\label{cor: D infty af}
          Every minimal $D_\infty$-action on an infinite compact metrizable space $X$ with the SBP is almost finite. 
      \end{cor}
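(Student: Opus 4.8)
The plan is to reduce to the Cantor-set case already handled in \cite[Theorem 2.10]{O-S} by passing to the zero-dimensional extension, and then to upgrade almost finiteness in measure to genuine almost finiteness through dynamical comparison. Concretely, writing $\Gamma=D_\infty$, I would first invoke Proposition \ref{prop: minimal SBP extension} to produce a minimal extension $\pi\colon(Z,\gamma)\to(X,\alpha)$ that is measure-isomorphic over singleton fibers (in the sense of \cite[Definition 4.2]{K-S}) with $Z$ the Cantor set. Since $\gamma$ is then a minimal $D_\infty$-action on the Cantor set, \cite[Theorem 2.10]{O-S} applies directly and shows that $\gamma$ is almost finite. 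Feeding this into Corollary \ref{cor: SBP af in measure} immediately yields that $\alpha$ is almost finite in measure, which secures conditions (i)--(ii) and the measure-smallness of the remainder.

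By Proposition \ref{rmk: af in measure plus comparison equal af}, it then remains only to show that $\alpha$ has dynamical comparison; combined with almost finiteness in measure this gives almost finiteness. To obtain comparison for $\alpha$ I would transport it from $\gamma$, which does have dynamical comparison by Proposition \ref{prop: af imply af in measure and comparison} (applied to the almost finite action $\gamma$). So let $U,O\subset X$ be open with $\mu(U)<\mu(O)$ for every $\mu\in M_\Gamma(X)$. Because $\pi$ is measure-isomorphic over singleton fibers, the push-forward $\pi_*$ is an affine bijection $M_\Gamma(Z)\to M_\Gamma(X)$, so for every $\nu\in M_\Gamma(Z)$ one has $\nu(\pi^{-1}(U))=(\pi_*\nu)(U)<(\pi_*\nu)(O)=\nu(\pi^{-1}(O))$; hence comparison for $\gamma$ gives $\pi^{-1}(U)\prec\pi^{-1}(O)$ in $Z$.

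The remaining task, which I expect to be the crux, is to descend $\pi^{-1}(U)\prec\pi^{-1}(O)$ back to $U\prec O$ in $X$. Given a closed $K\subset U$, comparison in $Z$ supplies open sets $V_1,\dots,V_n\subset Z$ and $s_1,\dots,s_n\in\Gamma$ with $\pi^{-1}(K)\subset\bigcup_i V_i$ and $\bigsqcup_i s_iV_i\subset\pi^{-1}(O)$. The useful observation is that, since $\pi$ is a closed map, the saturated open sets $W_i:=X\setminus\pi(Z\setminus V_i)=\set{x\in X:\pi^{-1}(x)\subset V_i}$ satisfy $\pi^{-1}(W_i)\subset V_i$, and by surjectivity and equivariance of $\pi$ the translates $s_1W_1,\dots,s_nW_n$ remain pairwise disjoint with $\bigsqcup_i s_iW_i\subset O$. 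Thus disjointness and containment in $O$ transfer for free. The delicate point is covering: $\bigcup_i W_i$ contains exactly those $x\in K$ whose fibre $\pi^{-1}(x)$ lies inside a single $V_i$, in particular all singleton-fibre points of $K$, but not necessarily the points of the non-injective locus $D=\set{x\in X:|\pi^{-1}(x)|>1}$, which is nonempty since $Z$ is zero-dimensional while $X$ need not be.

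Handling $K\cap D$ is the main obstacle, and this is precisely where the hypotheses are used. The set $D$ is null for every invariant measure (this is exactly what measure-isomorphism over singleton fibers provides), so $K\cap D$ is a closed null set. My plan here is to exploit the strict gap $\mu(U)<\mu(O)$ together with the small boundary property: using compactness of $M_\Gamma(X)$ and the SBP I would first replace $O$ by an open set $O'$ with $\overline{O'}\subset O$, $\mu(\partial O')=0$, and $\mu(U)<\mu(O')$ for all $\mu\in M_\Gamma(X)$, then run the saturation argument above with $O'$ in place of $O$, leaving a collar $O\setminus\overline{O'}$ of uniformly positive measure. The closed null set $K\cap D$ must then be absorbed, by disjoint translates, into this collar; I expect that making this packing rigorous — essentially an on-the-nose comparison statement for a meager, null set covered by fibre-saturated neighbourhoods — is the genuinely technical step, again leaning on the null/meager nature of $D$ and on minimality of $\alpha$. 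Once dynamical comparison for $\alpha$ is established in this way, Proposition \ref{rmk: af in measure plus comparison equal af} closes the argument and yields that $\alpha$ is almost finite.
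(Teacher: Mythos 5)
The first half of your argument coincides with the paper's proof: pass to the minimal Cantor extension $\pi\colon(Z,\gamma)\to(X,\alpha)$ via Proposition \ref{prop: minimal SBP extension}, apply \cite[Theorem 2.10]{O-S} to get almost finiteness of $\gamma$, and feed this into Corollary \ref{cor: SBP af in measure} to conclude that $\alpha$ is almost finite in measure. Up to that point everything is fine.

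The second half, however, has a genuine gap, and in fact a circularity. You try to descend dynamical comparison from $\gamma$ to $\alpha$ through $\pi$, and you correctly identify that the saturated sets $W_i=\{x: \pi^{-1}(x)\subset V_i\}$ only cover the part of $K$ lying off the non-injective locus $D$. Your plan for the leftover closed set $K\cap D$ is to ``absorb it, by disjoint translates, into the collar $O\setminus\overline{O'}$'' using that $K\cap D$ is null and the collar has uniformly positive measure. But that absorption statement --- a closed set of invariant measure zero subequivalent to an open set of positive measure --- is precisely an instance of dynamical comparison for $\alpha$, which is the property you are in the middle of proving. Comparison of null closed sets against positive-measure open sets is not automatic (it is not implied by measure inequalities alone; it requires some tower/tiling structure in $X$ itself, not just in $Z$), so the argument does not close as written; you acknowledge as much by flagging the packing step as unresolved. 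The paper sidesteps this entire descent: since $D_\infty$ has polynomial growth, the action $\alpha$ has dynamical comparison directly by \cite[Theorem A]{Na}, and then Proposition \ref{rmk: af in measure plus comparison equal af} finishes the proof. If you want to salvage your route, you would need an independent proof that factor maps which are measure-isomorphic over singleton fibers reflect comparison, which is not established in this paper and is not obviously easier than the polynomial-growth input.
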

          \begin{proof}
          Let $\alpha$ be a minimal 
 $D_\infty$-action on a compact metrizable space $X$ with the SBP. Then Proposition \ref{prop: minimal SBP extension} implies that there is a minimal Cantor extension $\pi: (Z, \gamma)\to (X, \alpha)$, which is measure-isomorphic over singleton fibers. It was proven in  \cite[Theorem 2.10]{O-S} that $\gamma$ is almost finite. Therefore, Corollary \ref{cor: SBP af in measure} implies that $\alpha$ is almost finite in measure. Finally, since $D_\infty$ is of polynomial growth, the action $\alpha$ has dynamical comparison by \cite[Theorem A]{Na}, and thus $\alpha$ is almost finite by Proposition \ref{rmk: af in measure plus comparison equal af}.
          \end{proof}
\begin{rmk}
As every minimal $D_\infty$-action on an infinite compact metrizable space $X$ with the SBP is almost finite, it follows that $C(X)\rtimes_r D_\infty$ is classifiable by its Elliott invariant and has nuclear dimension at most one by \cite[Corollary~9.10]{M-W}. Hence, we have extended \cite[Theorem~C]{BL} from finite covering dimension to the small boundary property.
\end{rmk}

We now focus on a more general setting that our acting group $\Gamma$ is locally finite-by-$\Z$ or locally finite-by-$D_\infty$ groups. We begin with the following two general facts.

\begin{prop}\label{prop: extension ess free}
          Let $\Gamma$ be a countable discrete group and $\pi: (Z, \gamma)\to (X, \alpha)$ is an extension of $\Gamma$-dynamical systems such that $M_\Gamma(Z)\neq \emptyset$. Suppose $\alpha: \Gamma\curvearrowright X$ is essentially free. Then so is $\gamma: \Gamma\curvearrowright Z$.
      \end{prop}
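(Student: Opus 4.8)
The plan is to exploit the equivariance of $\pi$ together with the pushforward of invariant measures; essentially no dynamics beyond a set inclusion is needed. Fix a non-trivial $s\in\Gamma$ and write $F_s^\gamma=\{z\in Z: sz=z\}$ and $F_s^\alpha=\{x\in X: sx=x\}$ for the two fixed-point sets. Both are closed, hence Borel, since the actions are by homeomorphisms on compact Hausdorff spaces. The key elementary observation is that the equivariant factor map pushes fixed points to fixed points: because $\pi(sz)=s\pi(z)$ for all $z\in Z$, any $z$ with $sz=z$ satisfies $s\pi(z)=\pi(sz)=\pi(z)$, so $\pi(z)\in F_s^\alpha$. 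This yields the inclusion
\[
F_s^\gamma\subset \pi^{-1}(F_s^\alpha).
\]

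Next I would transport invariant measures from $Z$ to $X$. Given any $\nu\in M_\Gamma(Z)$ (which exists by hypothesis), the pushforward $\pi_*\nu$, defined by $(\pi_*\nu)(E)=\nu(\pi^{-1}(E))$ for Borel $E\subset X$, is a Borel probability measure on $X$, and it is $\Gamma$-invariant: for $t\in\Gamma$ equivariance gives $\pi^{-1}(tE)=t\pi^{-1}(E)$, so $(\pi_*\nu)(tE)=\nu(t\pi^{-1}(E))=\nu(\pi^{-1}(E))=(\pi_*\nu)(E)$. Hence $\pi_*\nu\in M_\Gamma(X)$.

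Combining the two steps, for every $\nu\in M_\Gamma(Z)$ one has
\[
\nu(F_s^\gamma)\leq \nu(\pi^{-1}(F_s^\alpha))=(\pi_*\nu)(F_s^\alpha)\leq \sup_{\mu\in M_\Gamma(X)}\mu(F_s^\alpha)=0,
\]
where the last equality is precisely the essential freeness of $\alpha$. Taking the supremum over $\nu\in M_\Gamma(Z)$ gives $\sup_{\nu\in M_\Gamma(Z)}\nu(F_s^\gamma)=0$, and since $s\neq e$ was arbitrary, $\gamma$ is essentially free. There is no genuine obstacle in this argument: the only things to check are the Borel measurability of the fixed-point sets and the invariance of the pushforward, both routine, and all the content sits in the inclusion $F_s^\gamma\subset\pi^{-1}(F_s^\alpha)$ forced by equivariance. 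The hypothesis $M_\Gamma(Z)\neq\emptyset$ plays no role beyond making the statement non-vacuous (and, via the pushforward, guaranteeing $M_\Gamma(X)\neq\emptyset$ so that essential freeness of $\alpha$ is a meaningful assumption).
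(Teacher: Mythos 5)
Your argument is correct and is essentially the same as the paper's: both rest on the equivariance-forced inclusion of the fixed-point set of $\gamma$ into the preimage of the fixed-point set of $\alpha$, followed by pushing forward an invariant measure and invoking essential freeness of $\alpha$. The only cosmetic difference is that the paper routes the inclusion through $\pi^{-1}(\pi(F))$ while you go directly to $\pi^{-1}(F_s^\alpha)$, which if anything is slightly cleaner since that set is manifestly closed.
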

      \begin{proof}
      Let $g\in \Gamma\setminus \{e_\Gamma\}$ and $\mu\in M_\Gamma(Z)$. Denote by $F=\{z\in Z: gz=z\}$ the fixed point set for $g$. Then observe that $F\subset \pi^{-1}(\pi(F))$ and thus $\mu(F)\leq(\pi^*\mu)(\pi(F))$, where $\pi^*\mu$ is the push-forward measure of $\mu$. Then note that 
      $\pi(F)\subset \{x\in X: gx=x\}$, which implies that 
      \[\mu(F)\leq (\pi^*\mu)(\{x\in X: gx=x\})=0\]
      because $\alpha$ is essentially free. Thus $\gamma: \Gamma\curvearrowright X$ is also essentially free.
      \end{proof}

It was shown in \cite[Remark 2.4]{O-S} and \cite[Lemma 2.2]{Joseph} that almost finiteness of an action $\alpha$ implies that $\alpha$ is necessarily essentially free. We show here that the result actually holds for actions that are almost finite in measure.

\begin{prop}\label{prop: af in measure imply ess free}
    Suppose $\alpha: \Gamma\curvearrowright X$ is an action that is almost finite in measure. Then the action $\alpha$ is essentially free.
\end{prop}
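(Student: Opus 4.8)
The plan is to prove the statement directly, by a counting estimate rather than by contradiction. I fix a nontrivial element $g\in\Gamma$ and an invariant measure $\mu\in M_\Gamma(X)$, and I aim to show $\mu(F_g)<2\epsilon$ for every $\epsilon>0$, where $F_g=\{x\in X:gx=x\}$ is the (closed, hence measurable) fixed point set. Since $g$ and $\mu$ are arbitrary, this yields $\sup_{\mu\in M_\Gamma(X)}\mu(F_g)=0$ for all $g\neq e$, which is precisely essential freeness.

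The engine is a local disjointness observation inside a single tower. Given $\epsilon>0$, I would apply almost finiteness in measure with the finite set $K=\{g\}$ to obtain an open castle $\CC=\{(S_i,B_i):i\in I\}$ whose shapes $S_i$ are $(\{g\},\epsilon)$-F{\o}lner and whose remainder satisfies $\mu(X\setminus\bigsqcup_{i\in I}S_iB_i)<\epsilon$. The key claim is that if $x\in F_g\cap sB_i$ for some $i\in I$ and $s\in S_i$, then $gs\notin S_i$. Indeed, writing $x=\alpha_s(b)$ with $b\in B_i$, the relation $gx=x$ gives $\alpha_{gs}(b)=\alpha_s(b)\in sB_i\cap(gs)B_i$; since $g\neq e$ forces $gs\neq s$, membership $gs\in S_i$ would contradict the disjointness of the translates $\{tB_i:t\in S_i\}$. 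Hence $F_g$ can meet the $i$-th tower only over the layers indexed by $\{s\in S_i:gs\notin S_i\}$.

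It then remains to count. The offending index set has cardinality $|gS_i\setminus S_i|<\epsilon|S_i|$ by the F{\o}lner condition, and invariance of $\mu$ (so $\mu(sB_i)=\mu(B_i)$) gives
\[
\mu\!\Big(F_g\cap\bigsqcup_{s\in S_i}sB_i\Big)\le|gS_i\setminus S_i|\,\mu(B_i)<\epsilon\,|S_i|\,\mu(B_i)=\epsilon\,\mu\!\Big(\bigsqcup_{s\in S_i}sB_i\Big).
\]
Summing over $i\in I$ and using disjointness of the towers in the castle bounds the measure of $F_g$ inside $\bigsqcup_{i\in I}S_iB_i$ by $\epsilon$, while condition (iii) of Definition \ref{defn: af in measure} bounds $\mu(F_g\setminus\bigsqcup_{i\in I}S_iB_i)\le\mu(X\setminus\bigsqcup_{i\in I}S_iB_i)$ by $\epsilon$. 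Adding the two estimates yields $\mu(F_g)<2\epsilon$, as desired.

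This is essentially bookkeeping, so I do not expect a serious obstacle; the one point requiring care is the local disjointness claim, which is where the geometry of the problem enters. It forces the $g$-fixed points of a tower to lie over the F{\o}lner boundary of its shape, and it is exactly the control on the relative size of that boundary, together with $\mu$-invariance and the smallness of the remainder, that drives the conclusion. Everything else is subadditivity of $\mu$ and the defining properties of the castle.
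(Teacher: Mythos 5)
Your argument is correct and is essentially the paper's own proof: the paper sets $T_i=g^{-1}S_i\cap S_i$ and shows $F_g\cap\bigsqcup_i T_iB_i=\emptyset$ by exactly your disjointness-of-levels observation, then bounds $\mu$ of the complementary layers $(S_i\setminus T_i)B_i$ (your set $\{s\in S_i:gs\notin S_i\}$) via the F{\o}lner condition and adds the remainder. The only cosmetic difference is that the paper calibrates with $\epsilon/2$ in each piece to land on $\epsilon$ rather than $2\epsilon$.
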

\begin{proof}
    Let $\mu\in M_\Gamma(X)$, $\epsilon>0$ and $g\in \Gamma\setminus\{e_\Gamma\}$. Since $\alpha$ is almost finite in measure, there exists an open castle $\{(S_i, B_i): i\in I\}$ such that 
    \begin{enumerate}[label=(\roman*)]        \item each $S_i$ is a F{\o}lner set such that $|S_i\setminus g^{-1}S_i|<(\epsilon/2)|S_i|$;  
        \item we have $\sup_{\mu\in M_\Gamma(X)}(X\setminus \bigsqcup_{i\in I}S_iB_i)<\epsilon/2$.
    \end{enumerate}
    Now, denote by $F_g=\{x\in X: gx=x\}$ the set of fixed points by $g$ and for each $i\in I$, we write $T_i=g^{-1}S_i\cap S_i$ for simplicity, which satisfies $|T_i|\geq (1-\epsilon/2)|S_i|$. Note that $F_g\cap (\bigsqcup_{i\in I}T_iB_i)=\emptyset$.  Suppose this is not the case. Then there exists an $i\in I$ and an $s\in T_i$ such that $x\in sB_i\cap F_g$. On the other hand, note that $gs\in S_i$ by the definition of $T_i$, which implies that $x=gx\in gsB_i$ as well. This is a contradiction because $s\neq gs$. Therefore, one necessarily has 
    \[F_g\subset X\setminus \bigsqcup_{i\in I}T_iB_i= (\bigsqcup_{i\in I}(S_i\setminus T_i)B_i)\sqcup (X\setminus \bigsqcup_{i\in I}S_iB_i),\]
    which entails that $\sup_{\mu\in M_\Gamma(X)}\mu(F_g)<\epsilon$. Since $\epsilon$ is arbitrary, one actually has $\mu(F_g)=0$ for any $\mu\in M_\Gamma(X)$ and thus  $\alpha$ is essentially free.
\end{proof}

Suppose $\Gamma$ is an amenable group satisfying
\[\begin{tikzcd}
0\arrow[r] & H \arrow[r, "i"] & \Gamma\arrow[r, "\rho"] & G\arrow[r] & 0,
\end{tikzcd}\]
in which $H$ is a locally finite group. Recall that if $H$ is finitely generated then $H$ is a finite group. Otherwise, there exists a strict increasing sequence of finite groups $e_\Gamma\in F_1\leq F_2\leq\dots$ such that $H=\bigcup_{n\in \N}F_n$. Note that $\{F_n: n\in \N\}$ actually forms a F{\o}lner sequence of $H$, which can be used to describe F{\o}lner sets in $\Gamma$ in the sense of the following lemma. The proof is based on an elementary combinatorial argument. We include the proof here for completeness.

\begin{lem}\label{lem: Folner set in extension}
    Let $\Gamma$ be an amenable group and
    \[\begin{tikzcd}
0\arrow[r] & H \arrow[r, "i"] & \Gamma\arrow[r, "\rho"] & G\arrow[r] & 0
\end{tikzcd}\]
the group extension above, in which $H=\bigcup_{n\in \N}F_n$ for a non-decreasing sequence of finite  groups $\{F_n: n\in \N\}$. Then for any finite set $K\subset \Gamma$ and $\epsilon>0$, there exists a $(\rho(K), \epsilon/|K|)$-F{\o}lner set $S$ in $G$ and large enough $n\in \N$ such that, if we denote by $\tilde{S}$ a lift of $S$ and write 
$A=\tilde{S}\cdot F_n$, then one has  $|K\cdot A\setminus A|<\epsilon |A|$.\end{lem}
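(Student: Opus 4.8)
The plan is to construct the desired F{\o}lner set of $\Gamma$ by taking a F{\o}lner set $S$ of the quotient $G$, lifting it, and thickening it by a large finite subgroup $F_n$ of $H$. The point is that $K$ will almost preserve $A=\tilde S F_n$: the only failures of invariance come from the boundary of $S$ in $G$, because the ``twisting'' produced by moving between lifts is a bounded subset of $H$ that gets absorbed into $F_n$ once $n$ is large.

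First I would set $\delta=\epsilon/|K|$ and, using that $G$ is amenable as a quotient of the amenable group $\Gamma$, choose a $(\rho(K),\delta)$-F{\o}lner set $S\subset G$, so $|\rho(K)S\setminus S|<\delta|S|$. I fix a lift $\tilde S$, meaning $\rho$ restricts to a bijection $\tilde S\to S$, and for $s\in S$ I write $\tilde s$ for the corresponding element of $\tilde S$. The main idea is then to isolate the finite twisting set: for each $k\in K$ and each $\tilde s\in\tilde S$ with $\rho(k)s\in S$, let $\tilde s'\in\tilde S$ be the unique lift of $\rho(k)s$; since $\rho(\tilde s')=\rho(k)\rho(\tilde s)$, the element $(\tilde s')^{-1}k\tilde s$ lies in $\ker\rho=H$. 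Collecting these over the finitely many pairs $(k,\tilde s)$ yields a finite set $E\subset H$. Because $H=\bigcup_n F_n$ is an increasing union of subgroups, there is $n_0$ with $E\subseteq F_{n_0}$; I fix any $n\ge n_0$ and put $A=\tilde S F_n$.

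With these choices the verification is routine bookkeeping. Since the cosets $\tilde s F_n$ for distinct $\tilde s\in\tilde S$ have distinct images $\rho(\tilde s)=s$ under $\rho$, they are pairwise disjoint, giving $|A|=|S|\,|F_n|$ and a unique expression of each $a\in A$ as $a=\tilde s f$ with $\tilde s\in\tilde S$, $f\in F_n$. Now fix $k\in K$ and $a=\tilde s f\in A$. If $\rho(k)s\in S$ then $ka=\tilde s'\bigl((\tilde s')^{-1}k\tilde s\bigr)f\in \tilde s' F_n\subseteq A$, because $(\tilde s')^{-1}k\tilde s\in E\subseteq F_n$ and $f\in F_n$ both lie in the subgroup $F_n$. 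Hence $ka\notin A$ forces $\rho(k)s\notin S$, i.e. $s\in S\setminus\rho(k)^{-1}S$; the number of such $s$ is $|S\setminus\rho(k)^{-1}S|=|\rho(k)S\setminus S|\le|\rho(K)S\setminus S|<\delta|S|$, and each contributes at most $|F_n|$ elements of $A$. Thus $|kA\setminus A|<\delta|S|\,|F_n|=\delta|A|$ for every $k$, and summing through $|KA\setminus A|\le\sum_{k\in K}|kA\setminus A|$ gives $|KA\setminus A|<|K|\delta|A|=\epsilon|A|$, as required.

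I expect the genuinely essential step to be the construction of $E$ in the second paragraph: recognizing that the corrections $(\tilde s')^{-1}k\tilde s$ form a \emph{finite} subset of $H$, so that a single $F_n$ with $n$ large can swallow all of them at once. This is exactly where local finiteness of $H$ is used, together with the subgroup property of the $F_n$ (which keeps the product $(\tilde s')^{-1}k\tilde s\,f$ inside $F_n$) and makes $A$ split as a controlled disjoint union of $F_n$-cosets indexed by $S$. The F{\o}lner counting around it is standard and should present no difficulty.
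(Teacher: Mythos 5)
Your proof is correct and follows essentially the same route as the paper: both arguments lift a $(\rho(K),\epsilon/|K|)$-F{\o}lner set of $G$, absorb the finite set of correction elements (your $E$, the paper's $\tilde S^{-1}K\tilde S\cap H$) into a sufficiently large subgroup $F_n$, and then count boundary cosets via $|kA\setminus A|\le|\rho(k)S\setminus S|\cdot|F_n|$. The only cosmetic difference is that the paper phrases the absorption step as the equivalence $sg_iH=g_jH\Leftrightarrow sg_iF=g_jF$, whereas you track the individual elements $(\tilde s')^{-1}k\tilde s$; the content is identical.
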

\begin{proof}
    Write $\delta=\epsilon/|K|$ for simplicity. Then choose a $(\rho(K), \delta)$-F{\o}lner set $S$ in $G$. Then one may write $S=\{g_iH: i\in I\}$ for some $g_i\in \Gamma$ and a finite index set $I$. Denote by $\tilde{S}=\{g_i: i\in I\}$, the representative set of $S$. Then choose large enough $n\in \N$ such that $\tilde{S}^{-1}K\tilde{S}\cap H\subset F_n$. Write $F=F_n$ for simplicity and observe that the choice of $F$ implies that
    \[sg_iH=g_jH \Longleftrightarrow sg_iF=g_jF\]
    for any $i, j\in I$ and $s\in K$. This entails that 
    \[|\{sg_iF: i\in I\}\setminus \{g_iF: i\in I\}|=|\{sg_iH: i\in I\}\setminus \{g_iH: i\in I\}|=|\rho(s)S\setminus S|\]
    for any $s\in K$.
    
    Now look at $A=\tilde{S}\cdot F=\bigsqcup_{i\in I}g_iF$, whose cardinality satisfies $|A|=|S|\cdot|F|$. For any $s\in K$, write
    \[sA\setminus A=(\bigsqcup_{i\in I}sg_iF)\setminus (\bigsqcup_{i\in I}g_iF)\]
    and because $F$-cosets are equal or disjoint, one has
    \[|sA\setminus A|\leq |\rho(s)S\setminus S|\cdot|F|=\delta|S|\cdot|F|=\delta|A|.\]
    Therefore, one has $|K\cdot A\setminus A|\leq |K|\delta\cdot |A|=\epsilon|A|.$
      \end{proof}

\begin{rmk}
We remark that the above Lemma \ref{lem: Folner set in extension} is a special case of the following known result, whose proof is more complicated. Let \[\begin{tikzcd}
0\arrow[r] & H \arrow[r, "i"] & \Gamma\arrow[r, "\rho"] & G\arrow[r] & 0
\end{tikzcd}\]
be the extension of countable discrete groups $G, H$ and $\Gamma$. Suppose $\{F_n: n\in \N\}$ and $\{E_m: m\in \N\}$ are F{\o}lner sequences for $H$ and $G$, respectively. For each $m$, denote by $E'_m$ a lift of $E_m$.
Then for any finite set $S\subset \Gamma$, $\epsilon>0$, there exists a set $A=E'_m\cdot F_n\subset \Gamma$ such that $|S\cdot A\setminus A|<\epsilon |A|$.

\end{rmk}
Now, we look at the orbit space $X/H$, equipped with an induced natural action $\beta: G\simeq \Gamma/H\curvearrowright X/H$, defined in the way that $\beta(sH)(Hx)=H\alpha_s(x)$. Since $H$ is normal, it is direct to see $\beta$ is well-defined. Moreover, if we denote by $\pi: X\to X/H$ the canonical quotient maps, then observe the action $\beta$ is compatible with $\alpha$, $\pi$ and group quotient homomorphism $\rho$ in the sense that $\pi(\alpha(s)x)=\beta(\rho(s))(\pi(x))$.

\begin{rmk}\label{rmk: basic property of X/H}
We recall some basic properties of the quotient map $\pi$ and the action $\beta$. 
\begin{enumerate}[label=(\roman*)]
\item The space $X/H$ is  infinite whenever $\alpha$ is minimal. Suppose not, there are only finitely many $H$-orbits in $X$. This is a contradiction to the fact that $X$ is an infinite compact space and the minimality of $\alpha$.
\item The map $\pi$ is open, which comes from the fact $\pi^{-1}(\pi(U))=HU$.
\item The space $X/H$ is zero-dimensional if $X$ is zero-dimensional. This is because (ii) implies that $\{\pi(A): A\text{ is clopen in } X\}$ is a basis for $X/H$.
\item The action $\beta$ is minimal whenever $\alpha$ is minimal. Indeed, let $A$ be a non-empty clopen set in $X$. observe that $G\cdot \pi(A)=\pi(\Gamma\cdot A)=X/H$ because $\Gamma\cdot A=X$ in this case. 
    \end{enumerate}
\end{rmk}

Moreover, the following needs a more complicated argument.

\begin{prop}\label{prop: ess free permanance}
Let \[\begin{tikzcd}
0\arrow[r] & H \arrow[r, "i"] & \Gamma\arrow[r, "\rho"] & G\arrow[r] & 0
\end{tikzcd}\]
be the extension of countable discrete groups $G, H$ and $\Gamma$.
    Suppose $\alpha: \Gamma\curvearrowright X$ is an essentially free action such that $X/H$ is Hausdorff. Let $\beta: G\curvearrowright X/H$ be the induced action mentioned above. Then $\beta$ is also essentially free. 
\end{prop}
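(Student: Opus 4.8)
The plan is to reduce essential freeness of $\beta$ to that of $\alpha$ by lifting invariant measures from $X/H$ to $X$. Since $\rho$ is surjective with kernel $H$, every non-trivial element of $G$ has the form $\rho(g)$ with $g\in\Gamma\setminus H$, so I would fix such a $g$ and set $F^\beta=\{Hx\in X/H:\beta(\rho(g))(Hx)=Hx\}$. As $X/H$ is a continuous image of the compact space $X$ and is Hausdorff, $\beta(\rho(g))$ is a self-homeomorphism and $F^\beta$ is closed, hence Borel. The first step is to identify its preimage. Using the intertwining relation $\pi\circ\alpha_g=\beta(\rho(g))\circ\pi$ together with the definition of the $H$-quotient, one checks that
\[
\pi^{-1}(F^\beta)=\{x\in X: H\alpha_g(x)=Hx\}=\bigcup_{h\in H}F_{h^{-1}g}=\bigcup_{\gamma\in Hg}F_\gamma,
\]
a countable union (as $H$ is countable) of fixed-point sets of $\alpha$. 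Because $g\notin H$ and $H$ is a subgroup, every $\gamma\in Hg$ lies outside $H$ and is in particular non-trivial, so each $F_\gamma$ is the fixed set of a non-identity element of $\Gamma$.

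The key step is the measure lift. Given $\nu\in M_G(X/H)$, consider $P_\nu=\{\mu\in\operatorname{Prob}(X):\pi_*\mu=\nu\}$. Since $\pi$ is a continuous surjection of compact spaces, the pushforward $\pi_*\colon\operatorname{Prob}(X)\to\operatorname{Prob}(X/H)$ is a weak$^*$-continuous affine surjection: its image is compact and convex and contains every Dirac mass $\delta_y=\pi_*\delta_x$, hence (by Krein--Milman) all of $\operatorname{Prob}(X/H)$. Therefore $P_\nu$ is non-empty, convex, and weak$^*$-compact. For each $s\in\Gamma$ and $\mu\in P_\nu$ the relation $\pi_*((\alpha_s)_*\mu)=(\beta(\rho(s)))_*\pi_*\mu=(\beta(\rho(s)))_*\nu=\nu$ shows that the affine weak$^*$-continuous action $s\cdot\mu=(\alpha_s)_*\mu$ of $\Gamma$ preserves $P_\nu$. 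Since $\Gamma$ is amenable, the Day (Markov--Kakutani) fixed-point theorem yields a fixed point $\mu\in P_\nu$, and this $\mu$ is precisely a measure in $M_\Gamma(X)$ with $\pi_*\mu=\nu$.

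With the lift in hand the conclusion is immediate: essential freeness of $\alpha$ gives $\mu(F_\gamma)=0$ for every non-trivial $\gamma$, so countable subadditivity yields
\[
\nu(F^\beta)=\mu(\pi^{-1}(F^\beta))=\mu\Big(\bigcup_{\gamma\in Hg}F_\gamma\Big)\le\sum_{\gamma\in Hg}\mu(F_\gamma)=0.
\]
As $\nu\in M_G(X/H)$ and $g\notin H$ were arbitrary, $\sup_{\nu\in M_G(X/H)}\nu(F^\beta)=0$, i.e.\ $\beta$ is essentially free. The only genuinely non-formal ingredient, and the step I expect to be the main obstacle, is the invariant measure lift in the middle paragraph: it is exactly where amenability of $\Gamma$ is used, and without a $\Gamma$-invariant $\mu$ pushing forward to $\nu$ one cannot transfer the $\mu$-negligibility of the $\alpha$-fixed sets to the given $G$-invariant measure. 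The preimage computation and the final estimate are routine bookkeeping with pushforwards.
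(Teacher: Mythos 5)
Your argument is correct and follows the same route as the paper: lift a $G$-invariant measure $\nu$ on $X/H$ to a $\Gamma$-invariant measure on $X$ using amenability, write $\pi^{-1}(F^\beta)$ as the countable union $\bigcup_{\gamma\in Hg}F_\gamma$ of fixed-point sets of non-trivial elements, and conclude by countable subadditivity and essential freeness of $\alpha$. The only (inessential) difference is how the lift is produced: you apply the Day/Markov--Kakutani fixed-point theorem to the compact convex fiber $P_\nu$ of the pushforward map, whereas the paper extends the corresponding state via Hahn--Banach and then averages over a F{\o}lner sequence; both implement the same amenability input.
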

\begin{proof}
   Write $Y=X/H$ for simplicity and let $\mu\in M_\Gamma(X)$. Define  a ``push-forward'' probability measure $\pi^*\mu$ on $Y$ by $\pi^*\mu(A)=\mu(\pi^{-1}(A))$. Because $\rho: \Gamma\to G$ is a quotient homomorphism, for any $s\in \Gamma$ and set $A$ in $Y$, one has
   \[\pi(x)\in \rho(s)A\Longleftrightarrow \rho(s^{-1})\pi(x)\in A\Longleftrightarrow \pi(s^{-1}x)\in A,\]
   which implies that $\pi^{-1}(\rho(s)A)=s\pi^{-1}(A)$. Thus, one has
 $\pi^*\mu\in M_G(Y)$. Then observe that $\pi^*: M_\Gamma(X)\to M_G(Y)$ is subjective. This is mainly due to the Hahn-Banach theorem exactly as in the remark after \cite[Proposition 4.3]{K-S}. To be more specific, let $\nu\in M_G(Y)$, which induces a $G$-invariant state $\sigma\in C(Y)^*$. On the other hand, $C(Y)$ can be viewed as a subspace in $C(X)$ via the embedding $f\mapsto f\circ \pi$. Using the Hahn-Banach theorem, one may extend $\sigma$ to a state $\lambda\in C(X)^*$. For any $s\in \Gamma$ and $f\in C(Y)$, note that
 \[(s\cdot(f\circ\pi))(x)=f(\pi(s^{-1}x))=f(\rho(s)^{-1}\pi(x))=((\rho(s)\cdot f)\circ\pi)(x),\]
 which implies that $\lambda(s\cdot (f\circ\pi))=\sigma(\rho(s)\cdot f)=\sigma(f)$. Let $\{F_n: n\in \N\}$ be a F{\o}lner sequence of $\Gamma$. Then any weak*-cluster point of $g\mapsto |F_n|^{-1}\sum_{s\in F_n}\lambda(s\cdot g)$ in $C(X)^*$ yields a $\Gamma$-invariant state $\tilde{\lambda}\in C(X)^*$ and thus a $\Gamma$-invariant probability measure $\mu$ in $M_\Gamma(X)$ by Riesz representation theorem.  The construction of $\tilde{\lambda}$ implies that $\tilde{\lambda}(f\circ \pi)=\sigma(f)$ for any $f\in C(Y)$, which implies $\pi^*\mu=\nu$.

 Now, let $\rho(s)$ be a nontrivial element in $G$ and let $A=\{\pi(x): \rho(s)\pi(x)=\pi(x)\}$ be the fixed point set of $\rho(s)$ in $Y$. Observe that 
 \[\pi^{-1}(A)=\{x\in X: Hsx=Hx\}=\bigcup_{h\in H}\{x\in X: hsx=x\}.\]
 Then since $hs$ is not a trivial element in $\Gamma$, one has $\mu(\{x\in X: hsx=x\})=0$ for any $\mu\in M_\Gamma(X)$ because $\alpha: \Gamma\curvearrowright X$ is essentially free. Now, since $H$ is countable, one has 
 \[\pi^*\mu(A)\leq \sum_{h\in H}\mu(\{x\in X: hsx=x\})=0.\]
 This implies that $\beta:G\curvearrowright Y$ is also essentially free.
\end{proof}

\begin{prop}\label{prop: af in measure permanance}
Let \[\begin{tikzcd}
0\arrow[r] & H \arrow[r, "i"] & \Gamma\arrow[r, "\rho"] & G\arrow[r] & 0
\end{tikzcd}\]
be the extension of countable discrete groups $G, H$ and $\Gamma$ in which $H$ is locally finite.
Suppose $\alpha: \Gamma\curvearrowright X$ is a minimal essentially free action on a zero-dimensional space $X$ such that $X/H$ is Hausdorff. Suppose the induced action $\beta: G\curvearrowright X/H$ defined above is almost finite. Then $\alpha$ is almost finite in measure.
\end{prop}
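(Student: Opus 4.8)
The plan is to lift an almost-finite clopen castle for $\beta$ on $Y:=X/H$ up to a clopen castle for $\alpha$ on $X$ that witnesses almost finiteness in measure, inserting the finite subgroups $F_n\le H$ in the shape direction. Since $Y$ is zero-dimensional (Remark \ref{rmk: basic property of X/H}) and $\beta$ is almost finite, the zero-dimensional refinement in Remark \ref{rmk: zero dimensional af} produces, for the F{\o}lner pair $(\rho(K),\epsilon/|K|)$, a finite clopen castle $\{(S_j,D_j):j\in J\}$ that \emph{partitions} $Y$, with each $S_j$ being $(\rho(K),\epsilon/|K|)$-F{\o}lner. Fixing lifts $\widetilde{S}_j\subset\Gamma$ of $S_j$ and applying the argument of Lemma \ref{lem: Folner set in extension}, I would choose $n$ large enough (uniformly over the finitely many $j$) so that each $A_j:=\widetilde{S}_j F_n$ is $(K,\epsilon)$-F{\o}lner; these are the shapes. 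Since $\pi^{-1}(D_j)$ is $H$-invariant one has $\alpha_{\tilde s f}\pi^{-1}(D_j)=\pi^{-1}(\beta_s D_j)$ for $f\in F_n$ and any lift $\tilde s$ of $s$, so that $X=\bigsqcup_j\bigsqcup_{s\in S_j}\pi^{-1}(\beta_s D_j)$ is a clopen partition.

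To build the bases I would excise a neighbourhood of the $F_n$-fixed locus and take a clopen fundamental domain on the remaining free part. Let $N=\bigcup_{f\in F_n\setminus\{e\}}\operatorname{Fix}(f)$, a closed set whose $H$-saturation $HN$ is contained in $\bigcup_{g\in H\setminus\{e\}}\operatorname{Fix}(g)$; by essential freeness of $\alpha$ this saturation is $\mu$-null for every $\mu\in M_\Gamma(X)$, so the closed set $\pi(N)\subset Y$ (closed since $Y$ is Hausdorff and $N$ is compact) satisfies $\nu(\pi(N))=0$ for every $\nu\in M_G(Y)$, using that $\pi^*\colon M_\Gamma(X)\to M_G(Y)$ is onto as in the proof of Proposition \ref{prop: ess free permanance}. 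Minimality of $\beta$ gives $\nu(D_j)>0$ for all $\nu$, and since $\nu\mapsto\nu(D_j)$ is continuous on the weak$^*$-compact set $M_G(Y)$ (as $D_j$ is clopen) it attains a positive minimum $c_j>0$. Applying \cite[Lemma 3.2]{M1} to $(Y,\beta)$ and shrinking to a clopen set, I would fix for each $j$ a clopen $\widehat U_j\subset D_j$ with $\pi(N)\cap D_j\subset\widehat U_j$ and $\sup_{\nu}\nu(\widehat U_j)<\epsilon c_j$. On the clopen $F_n$-invariant set $W_j:=\pi^{-1}(D_j\setminus\widehat U_j)$ the group $F_n$ then acts freely (as $W_j\cap N=\emptyset$), and being a free action of a finite group on a zero-dimensional compact space it admits a clopen fundamental domain $B_j$ with $\bigsqcup_{f\in F_n}fB_j=W_j$ — obtained by a direct disjointification of a finite clopen cover whose members have disjoint $F_n$-translates. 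Finally I would subdivide each $B_j$ into clopen pieces small enough that $\diam(\alpha_a(\cdot))<\epsilon$ for every $a$ in the finite set $\bigcup_j A_j$, which leaves the covered region unchanged.

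It remains to check that $\mathcal C=\{(A_j,B_{j,k})\}$ is a castle and to estimate the remainder. Disjointness splits into the $F_n$-direction (handled by $B_j$ being a fundamental domain on the free part) and the $\widetilde{S}_j$-direction (handled by the disjointness of $\{\beta_s D_j\}$ in $Y$ via $\alpha_{\tilde s f}\pi^{-1}(D_j)=\pi^{-1}(\beta_s D_j)$), and subdividing a base preserves it. For the remainder, $A_jB_j=\bigsqcup_{\tilde s\in\widetilde S_j}\alpha_{\tilde s}W_j$, so comparing with the clopen partition of $X$ yields $X\setminus\bigsqcup_j A_jB_j=\bigsqcup_j\bigsqcup_{s\in S_j}\pi^{-1}(\beta_s\widehat U_j)$. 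Writing $\nu=\pi^*\mu$ and using invariance, for any $\mu\in M_\Gamma(X)$
\[
\mu\Big(X\setminus\bigsqcup_j A_jB_j\Big)=\sum_{j}|S_j|\,\nu(\widehat U_j)<\epsilon\sum_j|S_j|\,c_j\le\epsilon\sum_j|S_j|\,\nu(D_j)=\epsilon,
\]
the last equality because $\{(S_j,D_j)\}$ partitions $Y$. Thus the remainder has measure below $\epsilon$ uniformly in $\mu$, and together with the F{\o}lner and diameter conditions this gives almost finiteness in measure.

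The step I expect to be the main obstacle is exactly this measure estimate: a naive excision of a \emph{single} small neighbourhood of the fixed-point set would be amplified by the total tower multiplicity $\sum_j|S_j|$, which is large (it grows like the reciprocal of the base measures). The resolution is the \emph{relative} excision above — choosing $\widehat U_j$ with $\sup_\nu\nu(\widehat U_j)<\epsilon c_j$, where $c_j=\min_\nu\nu(D_j)>0$ is available thanks to minimality — so that $\nu(\widehat U_j)<\epsilon\,\nu(D_j)$ holds simultaneously for all invariant measures and the amplification is absorbed into the partition identity $\sum_j|S_j|\nu(D_j)=1$.
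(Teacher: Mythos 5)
Your proof is correct and follows essentially the same route as the paper's: lift a clopen castle for $\beta$ partitioning $Y=X/H$, enlarge the lifted shapes by a finite subgroup $F_n\le H$ via Lemma \ref{lem: Folner set in extension}, excise a clopen neighbourhood of the $F_n$-fixed-point locus using essential freeness, and take clopen bases on the free remainder. The one substantive divergence is the excision step, and there your diagnosis of ``the main obstacle'' is mistaken: the castle for $\beta$, and hence the total multiplicity $\sum_i|\tilde S_i|$, is already fixed before the excision, so the paper simply chooses a single clopen $N\supset T=\{x\in X: sx=x \text{ for some } s\in F\}$ with $\sup_{\mu\in M_\Gamma(X)}\mu(N)\le \epsilon/\sum_i|\tilde S_i|$ (available from \cite[Lemma 3.2]{M1} since $\sup_\mu\mu(T)=0$) and bounds the remainder by $\bigcup_{s\in\bigcup_i\tilde S_i}sN$; your relative bound $\nu(\widehat U_j)<\epsilon c_j$ is valid but not needed. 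On the other hand, excising the $H$-saturated set $\pi^{-1}(\widehat U_j)$ rather than a bare neighbourhood in $X$ does buy you something: the leftover $W_j$ is then $F_n$-invariant and $F_n$ acts freely on it, so a clopen fundamental domain exists by straightforward disjointification, whereas the paper removes a non-$F$-invariant $N$ and must then invoke a ``further decomposition'' to make $(F,C^i_k)$ a tower, a step it leaves implicit.
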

\begin{proof}
Write $Y=X/H$ for simplicity. Let $e\in K\subset \Gamma$ be a finite set and $\epsilon>0$. Since $K$ is finite and $H$ can be written as $H=\bigcup_{n\in \N}F_n$ for a non-decreasing sequence of finite groups $F_n$.

Since $\beta$ is almost finite and $Y$ is zero-dimensional, one may find a finite castle $\CC=\{(S_i, V_i): i\in I\}$ such that 
\begin{enumerate}[label=(\roman*)]    \item each $S_i$ consists $e_{G}$ and $S_i$ is a $(\rho(K), \epsilon/|K|)$-F{\o}lner set in $G$;
\item each $V_i$ is a clopen set in $Y$;
\item we have $Y=\bigsqcup_{i\in I}S_iV_i$.
\end{enumerate}
Let $i\in I$. We enumerate $S_i$ by $\{g^i_jH: j\in J_i\}$ for some finite index set $J_i$, in which each $g^i_j\in \Gamma$ is a fixed representative for the coset contained in $S_i$. We also choose some $g^i_j=e_\Gamma$ for $e_G=H\in G$. Choose a proper finite subgroup $F\leq H$ and Lemma \ref{lem: Folner set in extension} implies that $\tilde{S}_i=\bigsqcup_{j\in J_i}g^i_jF\subset \Gamma$ is a $(K, \epsilon)$-F{\o}lner set. We then build castles with F{\o}lner shapes $\tilde{S}_i$ in $X$.

For simplicity, we denote by $W_i=\pi^{-1}(V_i)$, which is a $H$-invariant clopen set in $X$. We first claim that for $i, i'\in I$ and $j\in J_i$ and $j'\in J_{i'}$ satisfying $(i, j)\neq (i', j')$, one has $g^i_jW_i$ is disjoint from $g^{i'}_{j'}W_{i'}$. Suppose not, let $x\in g^i_jW_i\cap g^{i'}_{j'}W_{i'}$. Then one has $\pi(x)\in \rho(g^i_j)V_i\cap \rho(g^{i'}_{j'})V_{i'}$, which is a contradiction to the fact that $\rho(g^i_j)V_i$ and $\rho(g^{i'}_{j'})V_{i'}$ are different levels in the castle $\CC$. Then observe that actually $X=\bigsqcup_{i\in I}\bigsqcup_{j\in J_i}g^i_jW_i$.

Then, since each $W_i$ is $H$-invariant and thus $F$-invariant, for any $x\in W_i$, choose a clopen neighborhood $x\in A_x\subset W_i$ and the family $\{F\cdot A_x: x\in W_i\}$ form an open cover of $W_i$, which yields a finite subcover $\{F\cdot A^i_k: k=1,\dots, n_i\}$ of $W_i$. Now define $B^i_1=A^i_1$ and $B^i_{k}=A^i_k\setminus \bigcup_{1\leq j<k}FA^i_j$ for $1<k\leq n$. Then, one has $FB^i_k\cap FB^i_l=\emptyset$ and $\bigsqcup_{k=1}^nFB^i_k=W_i$. 

 Denote by $T=\{x\in X: sx=x\text{ for some }s\in F\}$, which is a closed set. Since $\alpha$ is essentially free, one actually has $\sup_{\mu\in M_\Gamma(X)}\mu(T)=0$. Then, for the $\epsilon$ above, \cite[Lemma 3.2]{M1} implies that there is a $\delta>0$ such that
 \[\sup_{\mu\in M_\Gamma(X)}\mu(\{x\in X: d(x, T)\leq \delta\})\leq \epsilon/(\sum_{i\in I}|\tilde{S}_i|).\]
 Therefore, there exists a clopen set $N$ by compactness of $T$ such that 
 \[T\subset N\subset \{x\in X: d(x, T)\leq \delta\}\]
 and thus also satisfying $\sup_{\mu\in M_\Gamma(X)}\mu(N)\leq \epsilon/(\sum_{i\in I}|\tilde{S}_i|)$.
 
 For each $i\in I$ and $k\leq n_i$, define $C^i_k=B^i_k\setminus N$, which is a clopen set consisting of no fixed points of $F$. Therefore,  via further decomposition of $C^i_k$ if necessary, we may assume $(F, C^i_k)$ is a tower. Then, by our construction above, the collection of all $(\bigsqcup_{j\in J_i}g^i_jF, C^i_k)$ are disjoint towers for $i\in I$ and $1\leq k\leq n_i$, i.e., the collection $\{(\tilde{S}_i, C^i_k): 1\leq k\leq n_i, i\in I\}$ is a clopen castle.

 Finally, observe that 
 \[X\setminus \bigsqcup_{i\in I}\bigsqcup_{1\leq k\leq n_i}\tilde{S}_iC^i_k\subset \bigcup_{s\in \bigcup_{i\in I}\tilde{S}_i}sN,\]
 which implies that
 \[\mu(X\setminus \bigsqcup_{i\in I}\bigsqcup_{1\leq k\leq n_i}\tilde{S}_iC^i_k)\leq \sum_{s\in \bigcup_{i\in I}\tilde{S}_i}\mu(sN)\leq \epsilon\]
  for any $\mu\in M_\Gamma(X)$. Recall that each shape $\tilde{S}_i$ is $(K, \epsilon)$-F{\o}lner. Thus $\alpha$ is almost finite in measure.
\end{proof}

As an application, we have arrived the following main technical result.
\begin{prop}\label{thm: af in measure}
Let
\[\begin{tikzcd}
0\arrow[r] & H \arrow[r, "i"] & \Gamma\arrow[r, "\rho"] & G\arrow[r] & 0
\end{tikzcd}\]
be the extension sequence of countable discrete groups $G, H$ and $\Gamma$
in which $H$ is locally finite and $G$ is either the integer group $\Z$ or the infinite dihedral group  $D_\infty$.
Suppose that for any minimal essentially free action $\gamma: \Gamma\curvearrowright Z$ on the Cantor set $Z$, the orbit space $Z/H$ is Hausdorff. Let $X$ be a compact metrizable space and $\alpha: \Gamma\curvearrowright X$ a minimal action. Then $\alpha$ is almost finite in measure if and only if it is essentially free and has the small boundary property.
\end{prop}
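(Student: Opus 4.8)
The plan is to prove the two implications separately, with essentially all the work in the backward direction. For the forward direction, assume $\alpha$ is almost finite in measure. Essential freeness is then immediate from Proposition~\ref{prop: af in measure imply ess free}. For the SBP, I would invoke the fact that almost finiteness in measure always forces the small boundary property: this is the forward implication of \cite[Theorem~5.6]{K-S} (see also \cite[Proposition~3.8]{M1}), whose proof converts the small-diameter levels of the almost-finite-in-measure castles into null-boundary neighborhoods and uses no freeness hypothesis. Together these give that $\alpha$ is essentially free with the SBP.

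For the backward direction, assume $\alpha$ is minimal, essentially free and has the SBP. Since $X$ need not be zero-dimensional, I would first replace it by a Cantor model: by Proposition~\ref{prop: minimal SBP extension} there is a minimal extension $\pi: (Z,\gamma)\to (X,\alpha)$, measure-isomorphic over singleton fibers, with $Z$ the Cantor set. As $\Gamma$ is amenable and $Z$ is compact, $M_\Gamma(Z)\neq\emptyset$, so Proposition~\ref{prop: extension ess free} shows $\gamma$ is essentially free, and it is minimal by construction. Thus $\gamma$ is a minimal essentially free action on the Cantor set, and the standing hypothesis of the proposition applies to give that $Z/H$ is Hausdorff.

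Next I would analyze the induced action $\beta: G\curvearrowright Z/H$. By Remark~\ref{rmk: basic property of X/H} the space $Z/H$ is infinite, zero-dimensional and $\beta$ is minimal; since $Z/H$ is also compact, Hausdorff, second countable and perfect (perfectness from minimality and infiniteness, exactly as in Proposition~\ref{prop: minimal SBP extension}), it is a Cantor set. The decisive point is that $G$ is $\Z$ or $D_\infty$, which lets me establish that $\beta$ is almost finite: if $G=\Z$, minimality on the infinite space $Z/H$ forces $\beta$ to be free, so $\beta$ is almost finite by \cite{K-N} (a minimal free action of an elementary amenable group on a finite-dimensional space); if $G=D_\infty$, then $\beta$ is almost finite by \cite[Theorem~2.10]{O-S}, which covers \emph{all} minimal $D_\infty$-actions on the Cantor set with no freeness assumption. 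Now $\gamma$ is minimal essentially free on the zero-dimensional space $Z$, $Z/H$ is Hausdorff and $\beta$ is almost finite, so Proposition~\ref{prop: af in measure permanance} yields that $\gamma$ is almost finite in measure. Finally, I would transfer this down to $X$: because $\pi$ is measure-isomorphic over singleton fibers, the argument of Corollary~\ref{cor: SBP af in measure} (the transfer step in \cite[Theorem~5.6]{K-S}) pushes the clopen castles on $Z$ to $X$ and, using only the correspondence of invariant measures, concludes that $\alpha$ is almost finite in measure.

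The step I expect to be the main obstacle is this last transfer. Corollary~\ref{cor: SBP af in measure} is phrased for $\gamma$ \emph{almost finite}, whereas the permanence result only gives $\gamma$ almost finite \emph{in measure}, and I cannot upgrade the latter in general: dynamical comparison for $\gamma$ is unavailable here, since $\Gamma$ may have exponential growth (e.g.\ the lamplighter group $\Z_2\wr\Z$), so Proposition~\ref{rmk: af in measure plus comparison equal af} does not apply on $Z$. I must therefore check that the singleton-fiber measure-isomorphism transfers almost finiteness in measure directly, which it does because that construction only uses the bijective correspondence of invariant measures and the genericity of singleton fibers, never the full comparison content of almost finiteness. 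A lesser point needing care is the forward SBP implication, where I rely on the freeness-independence of the relevant direction of \cite[Theorem~5.6]{K-S}.
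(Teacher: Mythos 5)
Your proposal is correct and follows essentially the same route as the paper: forward direction from Proposition \ref{prop: af in measure imply ess free} plus the SBP consequence of almost finiteness in measure, and backward direction via the Cantor extension of Proposition \ref{prop: minimal SBP extension}, essential freeness of the lift, almost finiteness of the induced $G$-action on $Z/H$ (the paper uses Kakutani--Rokhlin for $\Z$ where you route through freeness and \cite{K-N}, an inessential difference), Proposition \ref{prop: af in measure permanance}, and the transfer argument of \cite[Theorem 5.6]{K-S}. Your caution about the final transfer step is well placed and matches what the paper actually does: it invokes the argument of \cite[Theorem 5.6]{K-S} directly rather than Corollary \ref{cor: SBP af in measure}, precisely because only almost finiteness in measure of $\gamma$ is available.
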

\begin{proof}
    Suppose $\alpha$ is almost finite in measure. Then $\alpha$ is essentially free by Proposition \ref{prop: af in measure imply ess free} above and has the SBP by \cite[Theorem 5.5]{K-S}. For the converse, because $\alpha$ has the SBP, \cite[Theorem 5.5]{K-S} implies that there exists an extension $\pi: (\Gamma, \gamma, Z)\to (\Gamma, \alpha, X)$, which is measure-isomorphic over singleton fibers such that $Z$ is zero-dimensional. In addition, Propositions \ref{prop: minimal SBP extension} and \ref{prop: extension ess free}  show that $Z$ actually is a Cantor set and the extended system $\gamma: \Gamma\curvearrowright Z$ is still minimal and essentially free. Now look at the space $Z/H$, which is compact Hausdorff by the assumption. Moreover, the space $Z/H$ is second countable and zero-dimensional by Remark \ref{rmk: basic property of X/H}(iii). The induced action $\beta: G\curvearrowright Z/H$ is minimal by \ref{rmk: basic property of X/H}(iv). Since $G$ is either $\Z$ or $D_\infty$, the action $\beta: G
    \curvearrowright Z/H$ is almost finite by the classical Kakutani-Rokhlin partition for $\Z$-minimal actions on zero-dimensional spaces or Corollary \ref{cor: D infty af}, respectively.
    Then Proposition \ref{prop: af in measure permanance} entails that $\gamma: \Gamma\curvearrowright Z$ is almost finite in measure.  Then the same argument in \cite[Theorem 5.6]{K-S} shows that $\alpha:\Gamma \curvearrowright X$ is almost finite in measure.
    \end{proof}

It is well known that every virtually $\Z$ group $\Gamma$ satisfies the exact sequence of group extension
\[\begin{tikzcd}
0\arrow[r] & F \arrow[r, "i"] & \Gamma\arrow[r, "\rho"] & G\arrow[r] & 0
\end{tikzcd}\]
in which $F$ is a normal finite group and $G$ is either $\Z$ or $D_\infty$. By applying Theorem \ref{thm: af in measure} and Remark \ref{rmk: allosteric}, we also have the following.

\begin{cor}\label{cor: virtually cyclic af}
    Let $\alpha: \Gamma\curvearrowright X$ be a minimal topologically free action of a virtually $\Z$ group $\Gamma$ on an infinite compact metrizable space $X$ with the small boundary property.  Then $\alpha$ is almost finite.
\end{cor}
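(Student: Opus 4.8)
The plan is to assemble the structural results already established in the paper, using the group extension of a virtually $\Z$ group as the organizing scaffold. First I would invoke the standard fact that every virtually $\Z$ group $\Gamma$ fits into a short exact sequence
\[\begin{tikzcd}
0\arrow[r] & F \arrow[r, "i"] & \Gamma\arrow[r, "\rho"] & G\arrow[r] & 0
\end{tikzcd}\]
in which $F$ is a normal finite subgroup and $G$ is isomorphic to either $\Z$ or $D_\infty$. The strategy is then to feed this data into Proposition \ref{thm: af in measure} to obtain almost finiteness \emph{in measure}, and afterwards to upgrade to genuine almost finiteness by supplying dynamical comparison via Proposition \ref{rmk: af in measure plus comparison equal af}.

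To apply Proposition \ref{thm: af in measure} to the extension above I must verify its two standing hypotheses. The requirement that $H=F$ be locally finite is immediate, since $F$ is finite. The Hausdorffness requirement asks that $Z/F$ be Hausdorff for every minimal essentially free action $\Gamma\curvearrowright Z$ on the Cantor set; but I would observe that this is automatic and needs no freeness input, because the quotient of any compact Hausdorff space by a continuous action of a finite group is again Hausdorff. Concretely, two distinct $F$-orbits are disjoint compact sets, which one separates by disjoint open sets $P, Q$; intersecting the finitely many $F$-translates produces disjoint $F$-invariant open sets whose (open, saturated) images separate the corresponding points of $Z/F$. It remains only to check that $\alpha$ itself is essentially free: here I would use that $\Gamma$, being virtually $\Z$, has only countably many subgroups and hence is non-allosteric by Remark \ref{rmk: allosteric}, so that minimality together with topological freeness forces $\alpha$ to be essentially free. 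With essential freeness and the small boundary property in hand, Proposition \ref{thm: af in measure} yields that $\alpha$ is almost finite in measure.

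Finally I would produce dynamical comparison. A virtually $\Z$ group has linear, hence polynomial, growth, so by \cite[Theorem A]{Na} the action $\alpha$ has dynamical comparison, exactly as in the proof of Corollary \ref{cor: D infty af}. Combining almost finiteness in measure with dynamical comparison, Proposition \ref{rmk: af in measure plus comparison equal af} then gives that $\alpha$ is almost finite. I do not expect a serious obstacle in this argument, since the analytic content is carried entirely by Proposition \ref{thm: af in measure}; the only genuinely load-bearing observations are the cost-free verification of the Hausdorff hypothesis afforded by the finiteness of $F$, and the use of non-allostericity to convert the assumed topological freeness into the essential freeness demanded by the machinery.
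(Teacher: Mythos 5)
Your proposal is correct and follows essentially the same route as the paper's proof: invoke the extension $0\to F\to\Gamma\to G\to 0$ with $F$ finite and $G\in\{\Z, D_\infty\}$, note that $Z/F$ is automatically Hausdorff since $F$ is finite, use non-allostericity (Remark \ref{rmk: allosteric}) to upgrade topological freeness to essential freeness, apply Proposition \ref{thm: af in measure} for almost finiteness in measure, and conclude via polynomial growth, \cite[Theorem A]{Na} and Proposition \ref{rmk: af in measure plus comparison equal af}. The only difference is that you spell out the elementary separation argument for Hausdorffness of $Z/F$, which the paper leaves implicit.
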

\begin{proof}
Because $F$ in the above extension sequence is finite, $Z/F$ is indeed always Hausdorff for any minimal essentially free Cantor action $\Gamma\curvearrowright Z$. Then, it follows from Remark \ref{rmk: allosteric} that the virtually $\Z$ group $\Gamma$ is non-allosteric. Therefore, actually, $\alpha$ is minimal and essentially free. Therefore, Proposition \ref{thm: af in measure} implies that $\alpha$ is almost finite in measure. Now, since the virtually $\Z$ group $\Gamma$ is of polynomial growth, the action $\alpha$ has dynamical comparison by \cite[Theorem A]{Na}. Therefore, $\alpha$ is almost finite by Proposition \ref{rmk: af in measure plus comparison equal af}.
\end{proof}

We remark that Corollary \ref{cor: virtually cyclic af} generalized the $D_\infty$ case in Corollary \ref{cor: D infty af} because all minimal $D_\infty$-action on an infinite compact space $X$ is automatically topologically free, proved by Jiang in \cite[Proposition 2.6]{J}. See also \cite[Proposition 2.8]{O-S}. Since $D_\infty$ is non-allosteric, any minimal action of $D_\infty$ is also essentially free. However, we provide a direct proof for this fact based on \cite[Proposition 2.6]{J}.

\begin{prop}\label{prop: D infty ess free}
Let $\alpha: D_\infty\curvearrowright X$ be a minimal action on an infinite compact Hausdorff space, then it is essentially free.    
\end{prop}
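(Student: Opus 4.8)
The plan is to split the nontrivial elements of $D_\infty=\Z_2*\Z_2=\langle a,b\mid a^2=b^2=e\rangle$ into the two natural families and treat each by a different mechanism. Write $t=ab$ for a generator of the canonical index-two cyclic subgroup $\langle t\rangle\cong\Z$, so that every nontrivial element is either a \emph{translation} $t^n$ with $n\neq 0$ or a \emph{reflection}, i.e.\ an involution lying in the nontrivial coset $\langle t\rangle a=\{t^na:n\in\Z\}$; recall the defining relation $rtr^{-1}=t^{-1}$ valid for every reflection $r$. Since $D_\infty$ is amenable and $X$ is compact Hausdorff, $M_{D_\infty}(X)\neq\emptyset$, and each fixed-point set $F_g=\{x:gx=x\}$ is closed, hence Borel. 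It therefore suffices to prove $\mu(F_g)=0$ for every $\mu\in M_{D_\infty}(X)$ and every $g\neq e$.

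First I would dispose of the translations by showing that they in fact act with \emph{no} fixed points at all. Restricting $\alpha$ to $\langle t\rangle$, choose (by compactness and Zorn's lemma) a $\langle t\rangle$-minimal closed subset $Z_0\subseteq X$. Because $a$ normalizes $\langle t\rangle$ (indeed $ata^{-1}=t^{-1}$, so $ta=at^{-1}$), the set $aZ_0$ is again $\langle t\rangle$-minimal, and $Z_0\cup aZ_0$ is a nonempty closed $D_\infty$-invariant set; minimality of $\alpha$ forces $X=Z_0\cup aZ_0$. As $X$ is infinite, $Z_0$ must be an infinite minimal $\Z$-system, which therefore contains no periodic points. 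Hence for $n\neq 0$ there is no $t^n$-fixed point in $Z_0$, and a point $ay\in aZ_0$ with $t^n(ay)=ay$ would give $t^{-n}y=y$ with $y\in Z_0$ (using $a t^n a=t^{-n}$), again impossible. Thus $F_{t^n}=\emptyset$ for all $n\neq 0$, and the translations are handled.

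The heart of the matter, and the step I expect to be the main obstacle, is the reflections: here topological freeness (from \cite[Proposition 2.6]{J}) only yields that $F_r$ is nowhere dense, which by itself says nothing about measure. Instead I would exploit the relation $rtr^{-1}=t^{-1}$ together with the previous step. For a reflection $r$ and $m\in\Z$ one has $t^mF_r=F_{t^mrt^{-m}}$, where $t^mrt^{-m}$ is again a reflection. If some point were fixed by both $r$ and $t^mrt^{-m}$, it would be fixed by their product $t^mrt^{-m}\cdot r=t^m(rt^{-m}r)=t^{2m}$; by the translation step this is impossible once $m\neq 0$. Consequently the sets $\{t^mF_r:m\in\Z\}$ are pairwise disjoint. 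Since $\mu$ is $t$-invariant they all carry measure $\mu(F_r)$, so disjointness in a probability space yields $\sum_{m\in\Z}\mu(F_r)\leq 1$, forcing $\mu(F_r)=0$.

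Combining the two cases gives $\mu(F_g)=0$ for every $\mu\in M_{D_\infty}(X)$ and every $g\neq e$, which is precisely essential freeness. I note that this argument is self-contained and uses only minimality and infiniteness of $X$ (via the $\langle t\rangle$-minimal decomposition); the cited topological freeness of $\alpha$ is not strictly needed, but it confirms consistency, since essential freeness of a minimal action implies topological freeness. The only genuinely delicate point is the reflection case, and the device that unlocks it is the conjugation identity $rtr^{-1}=t^{-1}$, which converts ``two reflections share a fixed point'' into ``a nontrivial translation has a fixed point,'' already excluded.
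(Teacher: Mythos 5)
Your argument is correct, and its decisive step is exactly the paper's: for a reflection $r$, the translates $t^mF_r$ are pairwise disjoint because a point fixed by two distinct conjugate reflections would be fixed by the nontrivial translation $t^{2m}$, and then summing the $t$-invariant measure over these disjoint sets forces $\mu(F_r)=0$ (the paper writes this as $s^kX_n\subset X_{2k+n}$ with $X_n\cap X_{2k+n}=\emptyset$). The only real divergence is preliminary: the paper rules out fixed points of translations by citing Jiang's result that $\alpha|_{\Z}$ is minimal (hence free) when $\alpha$ is not free, whereas you derive it self-containedly from the decomposition $X=Z_0\cup aZ_0$ into two $\Z$-minimal infinite subsystems; this makes your proof independent of \cite[Proposition 2.6]{J} but does not change the underlying mechanism.
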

\begin{proof}
   Let $\mu\in M_{D_\infty}(X)$ and write $D_\infty=\Z\rtimes \Z_2$ with generators $s, t$, which generate $\Z$ and $\Z_2$, respectively. We may assume $\alpha$ is not free and in this case, the restriction action $\alpha|_\Z$ on $X$ is minimal and thus free (see \cite[Proposition 2.6]{J}). Therefore, for any $x\in X$, the stabilizer group $\operatorname{stab}(x)$ at $x$, if not trivial, is of the form $\{e, s^nt\}$ for some $n\in \Z$. Denote  by $X_n=\{x\in X: s^ntx=x\}$. Then for any $x\in s^kX_n$ ($k\neq 0$), one has $s^{-k}x\in X_n$, which implies that
   \[s^{-k}x=s^nts^{-k}x=s^ns^ktx=s^{n+k}tx\] and thus $x\in X_{2k+n}$ holds, which implies $s^kX_n\subset X_{2k+n}$. On the other hand, suppose $X_n\cap  X_{2k+n}\neq \emptyset$. Choose an $x\in X_n\cap  X_{2k+n}$ and observe that 
   \[s^{2k+n}x=s^{2k+n}s^ntx=s^ns^{2k+n}tx=s^nx,\]
   which entails $s^{2k}\in \operatorname{stab}(x)$. This is a contradiction to that $\alpha|_\Z$ is free. Thus, $X_n$ has to be disjoint from  $X_{2k+n}$, which implies that $X_n$ is disjoint from  $s^kX_n$ for any $k\neq 0$ because $s^kX_n\subset X_{2k+n}$. Therefore, $\{s^kX_n: k\in \Z\}$ is a disjoint family in $X$. Now one has
   \[\sum_{k\in \Z}\mu(s^kX_n)=\sum_{k\in \Z}\mu(X_n)\leq 1,\]
   which implies that $\mu(X_n)=0$. Thus, the action $\alpha$ is essentially free.
\end{proof}

However, it is not true for a general virtually $\Z$ group $\Gamma$ that all its minimal action is topologically free. For example, let $\Gamma=F\times \Z$, where $F$ is a non-trivial finite group and define an action $\alpha: \Gamma\curvearrowright X$ such that $\alpha|_\Z$ is minimal and $\alpha|_F$ is trivial.
On the other hand, we obtain the following more general result by applying group extension twice.

\begin{cor}\label{cor: af in measure more general}
     Let \[\begin{tikzcd}
0\arrow[r] & F \arrow[r, "i"] & \Gamma\arrow[r, "\rho"] & H\arrow[r] & 0
\end{tikzcd}\]
be an extension of countable discrete groups $F, H$, and $\Gamma$, in which $F$ is locally finite and $H$ is virtually $\Z$. Suppose $\alpha: \Gamma \curvearrowright X$ is a minimal action on 
the Cantor set $X$ such that $X/F$ is Hausdorff.  Then $\alpha$ is almost finite in measure if and only if it is essentially free.
     \end{cor}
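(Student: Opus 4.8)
The plan is to handle the two implications separately, the forward one being immediate and the backward one reducing, through a single application of the group-extension permanence machinery, to the virtually-$\Z$ case already settled in Corollary~\ref{cor: virtually cyclic af}. For the direction ``almost finite in measure $\Rightarrow$ essentially free'' nothing new is required: this is exactly Proposition~\ref{prop: af in measure imply ess free}, which applies verbatim to $\alpha$ and uses none of the extension structure. So the real work is the converse, for which I would assume $\alpha$ minimal and essentially free and aim to descend the problem to the quotient by the locally finite kernel $F$.

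First I would form the induced action $\beta: H \curvearrowright X/F$ of the virtually $\Z$ group $H = \Gamma/F$ and collect the structural properties of $X/F$ from Remark~\ref{rmk: basic property of X/H}: minimality of $\alpha$ makes $X/F$ infinite (part (i)); since $X$ is the Cantor set and the quotient map is open, $X/F$ is zero-dimensional (part (iii)), and being a second-countable compact Hausdorff space (using the assumed Hausdorffness together with compactness of $X$) it is metrizable; moreover $\beta$ is minimal (part (iv)). Zero-dimensionality gives $X/F$ the small boundary property for free, as clopen neighbourhoods have empty boundary. Next I would upgrade to topological freeness of $\beta$: Proposition~\ref{prop: ess free permanance}, whose hypotheses ($\alpha$ essentially free and $X/F$ Hausdorff) are precisely what we have, shows $\beta$ is essentially free, and a minimal essentially free action is topologically free (as recalled in Section~\ref{sec2}).

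With these checks done, $\beta: H \curvearrowright X/F$ is a minimal topologically free action of a virtually $\Z$ group on an infinite compact metrizable space with the small boundary property, so Corollary~\ref{cor: virtually cyclic af} applies and yields that $\beta$ is almost finite. I would then feed this into Proposition~\ref{prop: af in measure permanance} applied to the extension $0 \to F \to \Gamma \to H \to 0$ with its locally finite kernel $F$: its hypotheses ($\alpha$ minimal essentially free on the zero-dimensional space $X$, $X/F$ Hausdorff, and the induced $\beta$ almost finite) are all met, and its conclusion is exactly that $\alpha$ is almost finite in measure. This is the ``group extension applied twice'' alluded to before the statement: once inside Corollary~\ref{cor: virtually cyclic af} to resolve the virtually-$\Z$ quotient (which itself peels off a finite normal subgroup to land on $\Z$ or $D_\infty$), and once through Proposition~\ref{prop: af in measure permanance} to climb from $H$ back up to $\Gamma$.

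I expect no serious obstacle, since all the genuine content already lives in the cited results; the difficulty is purely one of arranging hypotheses so that the two chain correctly. The step demanding the most care is verifying that every regularity property of $X/F$ (infinite, metrizable, zero-dimensional and hence SBP) and every dynamical property of $\beta$ (minimal, essentially free, hence topologically free) hold \emph{simultaneously}, because Corollary~\ref{cor: virtually cyclic af} requires all of them at once; once that bookkeeping is complete, the conclusion is immediate.
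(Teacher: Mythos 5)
Your proposal is correct and follows exactly the paper's own route: the forward direction via Proposition \ref{prop: af in measure imply ess free}, and the converse by passing to the induced action $\beta: H \curvearrowright X/F$, invoking Proposition \ref{prop: ess free permanance} and Corollary \ref{cor: virtually cyclic af}, and then lifting back with Proposition \ref{prop: af in measure permanance}. The paper states this as a one-line "direct application" of those three results; your version merely spells out the bookkeeping (infiniteness, zero-dimensionality, metrizability, SBP, topological freeness of $\beta$) that the paper leaves implicit.
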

     \begin{proof}
         This is a direct application of Propositions \ref{prop: ess free permanance}, \ref{prop: af in measure permanance}, and Corollary \ref{cor: virtually cyclic af} and the same argument record in Theorem \ref{thm: af in measure}. 
     \end{proof}

From Corollary \ref{cor: virtually cyclic af} and \ref{cor: af in measure more general}, we have the following applications to the structure theory of crossed product $C^*$-algebras by Theorem \ref{thm: af Z stability}, \ref{thm: af in measure Gamma}, \ref{thm: classification} and Remark \ref{ess free}.

\begin{cor}\label{cor: final}
\begin{enumerate}[label=(\roman*)]
    \item Let \[\begin{tikzcd}
0\arrow[r] & F \arrow[r, "i"] & G\arrow[r, "\rho"] & H\arrow[r] & 0
\end{tikzcd}\]
be an extension of countable discrete groups $F, H$, and $G$, in which $F$ is locally finite and $H$ is virtually $\Z$. Suppose $G \curvearrowright X$ is a minimal action on the Cantor set $X$ such that the quotient space $X/F$ is Hausdorff. Then $C(X)\rtimes_r G$ has uniform property $\Gamma$ and thus satisfies the Toms-Winter conjecture. 
    \item Suppose $\alpha: G\curvearrowright X$ is a minimal
    topologically free action of a 
    virtually $\Z$ group $\Gamma$ on an infinite compact metrizable space $X$ with the small boundary property, then $C(X)\rtimes_r G$ is $\CZ$-stable and thus classifiable by its Elliott invariant.    \end{enumerate}
    \end{cor}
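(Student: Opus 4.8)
The plan is to deduce both parts directly from the dynamical results already proved---Corollaries \ref{cor: virtually cyclic af} and \ref{cor: af in measure more general}---by inserting their conclusions into the $C^*$-algebraic transfer theorems recorded in Theorems \ref{thm: af Z stability}, \ref{thm: af in measure Gamma}, and \ref{thm: classification}, together with Remark \ref{ess free}.

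For (i), I would first note that since $X$ is the Cantor set the hypotheses of Corollary \ref{cor: af in measure more general} hold verbatim, so that $\alpha$ is almost finite in measure exactly when it is essentially free; in that (essentially free) case this gives almost finiteness in measure. I would then apply Theorem \ref{thm: af in measure Gamma}, invoking Remark \ref{ess free} to replace the freeness hypothesis there by essential freeness, to conclude that $C(X)\rtimes_r G$ has uniform property $\Gamma$. To pass to the Toms--Winter conjecture I would record the standard permanence facts: nuclearity from amenability of $G$, separability from metrizability of $X$ and countability of $G$, unitality from compactness of $X$, and simplicity from minimality together with topological freeness (which essential freeness entails for minimal actions) and amenability via \cite{AS}. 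Feeding uniform property $\Gamma$ into \cite{C-E-T-W} then yields the Toms--Winter equivalences.

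For (ii), the first step is to upgrade the hypotheses: since $G$ is virtually $\Z$ it is non-allosteric by Remark \ref{rmk: allosteric}, so the minimal topologically free action $\alpha$ is automatically essentially free, and Corollary \ref{cor: virtually cyclic af} shows that $\alpha$ is almost finite. I would then apply Theorem \ref{thm: af Z stability}, again using Remark \ref{ess free} to relax freeness to essential freeness, obtaining that $C(X)\rtimes_r G$ is $\CZ$-stable. To conclude classifiability I would verify the remaining hypotheses of Theorem \ref{thm: classification}: unitality and separability as before; nuclearity from amenability; simplicity from minimality, topological freeness and amenability via \cite{AS}; the UCT because the transformation groupoid $X\rtimes G$ is amenable; and stable finiteness from the existence of a $G$-invariant Borel probability measure (amenability of $G$ plus compactness of $X$), which together with $\tau=\tau\circ E$ for essentially free actions produces a faithful trace. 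Theorem \ref{thm: classification} then gives classification by the Elliott invariant.

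The bookkeeping of permanence properties is routine and standard for such crossed products. The one genuinely load-bearing point is the freeness upgrade: in (ii) it is clean because virtually $\Z$ groups are non-allosteric, so topological freeness forces essential freeness and hence almost finiteness; in (i) the acting group may itself be allosteric, so almost finiteness in measure, and therefore uniform property $\Gamma$, rests entirely on the essential freeness of the given action as encoded in Corollary \ref{cor: af in measure more general}.
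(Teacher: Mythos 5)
Your proposal is correct and follows essentially the same route as the paper, which simply feeds Corollaries \ref{cor: virtually cyclic af} and \ref{cor: af in measure more general} into Theorems \ref{thm: af Z stability}, \ref{thm: af in measure Gamma}, \ref{thm: classification} via Remark \ref{ess free}; your additional bookkeeping of nuclearity, simplicity, stable finiteness and the UCT is the standard verification left implicit in the paper. You also rightly flag that part (i) genuinely requires essential freeness of the action to invoke Corollary \ref{cor: af in measure more general}, a hypothesis the statement leaves tacit.
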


\section{Examples from subshifts}\label{sec4}
Even though it follows from \cite{AS} that minimality and topologically freeness are sufficient and necessary conditions for the crossed product to be simple, we decide to provide certain concrete examples from subshift that our theorems could apply. 

Let $\alpha:\Gamma\curvearrowright X$ be a topological dynamical system on a compact space $X$ and $x\in X$. It is well-known that $\Gamma\curvearrowright \overline{\Gamma\cdot x}$ is minimal if and only if $x$ is \textit{almost periodic} in the sense that for any neighborhood $U$ of $x$, the recurrent set $R_{x, U, \Gamma}=\{g\in \Gamma: gx\in U\}$ is \textit{syndetic}, i.e., there exists a finite set $K\subset \Gamma$ such that $K\cdot R_{x, U, \Gamma}=\Gamma$. See, e.g., \cite[Proposition 7.13]{Kerr-L}. If the context of the acting group $\Gamma$ is clear, we usually write $R_{x, U}$ instead of $R_{x, U, \Gamma}$ for simplicity.

Let $A$ be a finite alphabet and $w$ an infinite word in $A^\Z$. Define a ``mirror'' word $\bar{w}\in A^\Z$ of $w$ by $\bar{w}(n)=w(-n)$ for any $n\in \Z$. 

\begin{rmk}\label{rmk: recurrent set}
    We remark that $w$ is almost periodic if and only if $\bar{w}$ is almost periodic. For any open neighborhood of $\bar{w}$ with the form $U=\{y\in A^\Z: y(i)=a_i, i\in I\}$, where $I$ is a finite subset of $\Z$, define $V=\{y\in A^\Z: y(-i)=a_i, i\in I\}$ as the mirror image of $U$. Let $n\in R_{w,V}$, which implies that  $w(-i-n)=a_i$ for any $i\in I$. Thus, one has $(-n\cdot \bar{w})(i)=\bar{w}(i+n)=w(-i-n)=a_i$ for any $i\in I$ and thus $-n\in R_{\bar{w}, U}$. This implies that $-R_{w, V}\subset R_{\bar{w}, U}$. Now if $w$ is almost periodic, i.e., $R_{w, V}$ is syndetic, then there is a finite set $K\subset \Z$ such that $K+R_{w, V}=\Z$, which entails that $-K-R_{w, V}=\Z$. Therefore, $R_{\bar{w}, U}$ is syndetic as well. Thus $\bar{w}$ is almost periodic. Finally, note that $\bar{\bar{w}}=w$, and therefore we have shown the claim. Moreover, the argument above actually shows $R_{w, V}=-R_{\bar{w}, U}$ holds.
    \end{rmk}

We say an almost periodic word $w\in A^\Z$ \textit{balanced} if for any open neighborhood $U$ of $w$, there is a syndetic set $P\subset R_{w, U}$ in $\Z$ such that $P=-P$.   Typical examples of balanced almost periodic words are Toeplitz words. See \cite[Section 3.1 and Proposition 5]{CM} and see \cite{CK} for concrete examples of Toeplitz words with various complexities. Now for $D_\infty=\Z\rtimes \Z_2=\langle s,t| t^2, tsts\rangle$ and a word  $w\in A^\Z$, we define an amplified word $\hat{w}\in A^{D_\infty}$ as follows. Write $D_\infty=\Z\sqcup \Z \cdot t$ and define $\hat{w}(n)=w(n)$ and $\hat{w}(nt)=\bar{w}(n)$ for $n\in \Z$, where $\bar{w}$ is the mirror word of $w$. 

\begin{lem}\label{lem: almost periodic}
    Suppose $w\in A^\Z$ is a balanced almost periodic word for the left shift action by $\Z$. Then its amplified word $\hat{w}\in A^{D_\infty}$ defined above is almost periodic for the left shift action by $D_\infty$. 
\end{lem}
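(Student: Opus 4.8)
The plan is to verify almost periodicity of $\hat{w}$ directly via the syndeticity criterion for almost periodicity recalled at the beginning of this section. It suffices to treat a basic open neighborhood $\hat{U}=\set{y\in A^{D_\infty}: y(g)=\hat{w}(g)\text{ for all }g\in K}$ of $\hat{w}$ with $K\subset D_\infty$ finite, because enlarging a neighborhood only enlarges the associated recurrent set, and any set containing a syndetic set is itself syndetic. Using the decomposition $D_\infty=\Z\sqcup \Z t$, I would write $K=K_1\sqcup K_2 t$ with finite $K_1,K_2\subset \Z$, and introduce the finite set $L=K_1\cup(-K_2)\subset \Z$ together with the neighborhood $U=\set{z\in A^\Z: z(j)=w(j)\text{ for all }j\in L}$ of $w$. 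Since $w$ is balanced, there is a symmetric syndetic set $P=-P\subset R_{w,U,\Z}$.

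The heart of the argument is to show that for every $k\in P$ both the translation $s^k$ and the reflection $s^k t$ lie in $R_{\hat{w},\hat{U},D_\infty}$. Using the dihedral relation $ts^j=s^{-j}t$, the fact that each $s^k t$ is an involution, and the defining formulas $\hat{w}(s^n)=w(n)$ and $\hat{w}(s^n t)=\bar{w}(n)=w(-n)$, I would compute $g^{-1}h$ for $g\in\set{s^k,s^k t}$ and $h\in K$. In all four resulting cases the requirement $\hat{w}(g^{-1}h)=\hat{w}(h)$ collapses to the same two conditions, namely $w(n-k)=w(n)$ for $n\in K_1$ and $w(m+k)=w(m)$ for $m\in -K_2$. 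The first follows from $k\in R_{w,U,\Z}$ after unwinding $(k\cdot w)(j)=w(j-k)$ on $j\in L$, and the second from $-k\in R_{w,U,\Z}$; both are available since $P=-P\subset R_{w,U,\Z}$ contains $k$ and $-k$. This yields $\set{s^k:k\in P}\cup\set{s^k t:k\in P}\subset R_{\hat{w},\hat{U},D_\infty}$.

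Finally I would check that this subset is syndetic in $D_\infty$. As $P$ is syndetic in $\Z$, pick finite $K_0\subset \Z$ with $K_0+P=\Z$; then writing an arbitrary $n\in \Z$ as $n=j+k$ with $j\in K_0$ and $k\in P$ gives $s^n=s^j s^k$ and $s^n t=s^j(s^k t)$, so $\set{s^j:j\in K_0}$ left-translates the above set onto all of $D_\infty$. Hence $R_{\hat{w},\hat{U},D_\infty}$ is syndetic and $\hat{w}$ is almost periodic. The main obstacle, and the reason ordinary almost periodicity of $w$ does not suffice, is that a reflection $s^k t$ glues a value of $\hat{w}$ on the coset $\Z$ to a value on $\Z t$, thereby mixing $w$ with its mirror $\bar{w}$; handling such reflections forces recurrence in both directions at once, which is exactly what the symmetric set $P=-P$ furnished by the balanced hypothesis provides.
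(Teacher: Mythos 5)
Your proof is correct and follows essentially the same route as the paper: reduce to a cylinder neighborhood, use the balanced hypothesis to extract a symmetric syndetic set $P=-P$ inside the recurrent set of $w$, verify that both $s^k$ and $s^kt$ for $k\in P$ return $\hat{w}$ to the neighborhood (the reflections being exactly where $-k\in R_{w,U,\Z}$ is needed), and finish with the syndeticity of $P\sqcup tP$ in $D_\infty$. The only difference is presentational: the paper organizes the computation via the product decomposition $A^{D_\infty}=A^{\Z}\times A^{\Z t}$ and the mirror description of the $t$-action, whereas you work element-wise with the dihedral relations, which amounts to the same verification.
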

\begin{proof}
    Note that $A^{D_\infty}=A^{\Z\sqcup \Z\cdot t}=A^\Z\times A^{\Z \cdot t }$. Now let $U, V$ be non-empty open sets in $A^\Z$ and $A^{\Z\cdot t}$ such that $\hat{w}\in U\times V$. It suffices to show $R_{\hat{w}, U\times V}$ is syndetic. Without loss of generality, one may assume $U=\{y\in A^\Z: y(i)=a_i, i\in I\}$ and $V=\{y\in A^{\Z\cdot t}: y(-i\cdot t)=a_i, i\in I\}$ for a finite $I\subset \Z$ that is symmetric in the sense of $I=-I$.
    
    We may identify $A^{\Z\cdot t}$ as a copy of $A^\Z$ and define $x_1, x_2\in A^\Z$ by $x_1(n)=x(n)$ and $x_2(n)=x(nt)$. Then any $x\in A^{D_\infty}$ can be written as  $x=(x_1, x_2)$. From this point of view,  $\Z$, as a normal subgroup of $D_\infty$ of index $2$, acts on $A^\Z\times A^{\Z \cdot t }$ diagonally by $n\cdot (x_1, x_2)=(n\cdot x_1, n\cdot x_2)$, where $n\cdot x_i$ is the usual $n$-shift in $A^\Z$ for $i=1, 2$. In addition, in this picture, the element $t$ acts on $x=(x_1, x_2)$ like a mirror by $t\cdot (x_1, x_2)=(\bar{x_2}, \bar{x_1})$. Indeed, let $x=(x_1, x_2)$ observe that \[(t\cdot x)(n)=x(tn)=x(-n\cdot t)=x_2(-n)=\bar{x_2}(n)\]
    and
    \[(t\cdot x)(nt)=x(tnt)=x(-n)=x_1(-n)=\bar{x_1}(n).\]

 Denote by $U'=U\times A^{\Z\cdot t}$ for simplicity. Then, by our construction and Remark \ref{rmk: recurrent set}, observe that 
 \[R_{w, U, \Z}\sqcup t\cdot (-R_{w, U, \Z})\subset R_{\hat{w}, U', D_\infty}.\] 
 Similarly, denote by $V'=A^{\Z}\times V$ and by Remark \ref{rmk: recurrent set}, one has
 \[(-R_{w, U, \Z})\sqcup t\cdot R_{w, U, \Z}\subset R_{\hat{w}, V', D_\infty}.\]
Since $w$ is balanced almost periodic, there is a syndetic set $P\subset R_{w, U, \Z}$ for $\Z$ such that $P=-P$. Therefore, one has 
\[P\sqcup tP\subset (R_{w, U, \Z}\sqcup t\cdot (-R_{w, U, \Z}))\cap ((-R_{w, U, \Z})\sqcup t\cdot R_{w, U, \Z})\subset R_{\hat{w}, U\times V, D_\infty}.\]
Finally, let $K\subset \Z$ be a finite set such that $K+P=\Z$ because $P$ is syndetic in $\Z$. This then implies that $(K\cup tKt)\cdot (P\sqcup tP)=D_\infty$, which implies that $R_{\hat{w}, U\times V, D_\infty}$ is syndetic in $D_\infty$.
\end{proof}

Let $\Gamma\curvearrowright A^\Gamma$ and $w\in A^\Gamma$. Denote by $O(w)=\overline{\Gamma\cdot w}$ the orbit closure of $w$. Now, we have the following.

\begin{thm}
    Let $w$ be a balanced almost periodic word in $A^\Z$ and denote by $\hat{w}\in A^{D_\infty}$ defined above. Then the subshift $\alpha: D_\infty\curvearrowright O(\hat{w})$ is minimal topologically free but not free. Therefore, $\alpha$ is almost finite and $C(O(\hat{w}))\rtimes_r D_\infty$ is $\CZ$-stable and classifiable by the Elliott invariant.
\end{thm}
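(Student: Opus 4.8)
The plan is to establish the three dynamical properties asserted in the statement---minimality, topological freeness, and non-freeness---and then to feed the action directly into the machinery developed above. First I would record the ambient structure: $O(\hat w)$ is a subshift, hence a closed subset of the Cantor set $A^{D_\infty}$, so it is compact, metrizable and zero-dimensional, and in particular it has the small boundary property, since every point admits arbitrarily small clopen neighbourhoods, whose boundaries are empty and hence null for every invariant measure. I would also note that we must take $w$ \emph{aperiodic} (which holds for the Toeplitz words furnishing the concrete examples): then the $s$-shift of $\hat w$ projects $\Z$-equivariantly onto the infinite $\Z$-orbit of $w$ via $x\mapsto(x(n))_{n\in\Z}$, so $O(\hat w)$ is infinite. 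This aperiodicity is genuinely needed, since a periodic $w$ would make $\hat w$ periodic and $O(\hat w)$ finite, at which point topological freeness would fail.

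For minimality, Lemma \ref{lem: almost periodic} shows that $\hat w$ is almost periodic for the left $D_\infty$-shift, and the standard characterisation of minimal orbit closures via almost periodicity (\cite[Proposition 7.13]{Kerr-L}) gives that $\alpha:D_\infty\curvearrowright O(\hat w)$ is minimal. Topological freeness is then automatic on the infinite space $O(\hat w)$: every minimal action of $D_\infty$ on an infinite compact space is topologically free by \cite[Proposition 2.6]{J}, and equivalently it is essentially free by Proposition \ref{prop: D infty ess free}, with essential freeness implying topological freeness for minimal actions.

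The key explicit verification is non-freeness, for which I would exhibit a point with nontrivial stabiliser. Writing each $x\in A^{D_\infty}=A^\Z\times A^{\Z t}$ as $x=(x_1,x_2)$ with $x_1(n)=x(n)$ and $x_2(n)=x(nt)$, the computation of the $t$-action recorded inside the proof of Lemma \ref{lem: almost periodic} reads $t\cdot(x_1,x_2)=(\bar{x_2},\bar{x_1})$. For $\hat w$ one has $x_1=w$ and $x_2=\bar w$, whence $t\cdot\hat w=(\bar{\bar w},\bar w)=(w,\bar w)=\hat w$, so $t\in\operatorname{stab}(\hat w)\setminus\{e\}$ and $\alpha$ is not free. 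Equivalently, one checks directly from $ts^m=s^{-m}t$ that $(t\cdot\hat w)(m)=\hat w((-m)t)=w(m)$ and $(t\cdot\hat w)(mt)=\hat w(s^{-m})=w(-m)$, matching $\hat w$ in both ranges.

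Finally I would assemble the conclusion: since $D_\infty$ is virtually $\Z$ and $\alpha$ is a minimal topologically free action on an infinite compact metrizable space with the small boundary property, Corollary \ref{cor: virtually cyclic af} yields that $\alpha$ is almost finite, and Corollary \ref{cor: final}(ii) then gives that $C(O(\hat w))\rtimes_r D_\infty$ is $\CZ$-stable and classifiable by its Elliott invariant. I expect the main obstacle to be conceptual rather than computational: one must confirm that $\alpha$ is \emph{simultaneously} non-free---the generating point $\hat w$ is fixed by the reflection $t$---and topologically free---every reflection $s^n t$ has nowhere dense fixed-point set. Both can coexist precisely because the $\Z$-part acts freely while a single reflection may fix an isolated orbit point, and the one place demanding genuine care is getting the $t$-action formula and the coset arithmetic in $D_\infty=\langle s,t\mid t^2,tsts\rangle$ correct, together with the aperiodicity needed to keep $O(\hat w)$ infinite.
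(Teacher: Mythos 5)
Your proof is correct and follows essentially the same route as the paper: minimality from Lemma \ref{lem: almost periodic} together with the almost-periodicity criterion, topological freeness from \cite[Proposition 2.6]{J}, non-freeness by checking that $t$ fixes $\hat w$, and then Corollaries \ref{cor: virtually cyclic af} and \ref{cor: final}(ii). Your additional observations --- that zero-dimensionality gives the small boundary property for free, and that one should take $w$ aperiodic so that $O(\hat w)$ is infinite --- are points the paper leaves implicit, and the second one is a genuine (if minor) hypothesis worth recording.
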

\begin{proof}
Write $D_\infty=\Z\rtimes \Z_2=\langle s,t| t^2, tsts\rangle$. Lemma \ref{lem: almost periodic} implies that $D_\infty\curvearrowright O(\hat{w})$ is minimal and thus topologically free by \cite[Proposition 2.6]{J}. Note that $\hat{w}$ is a fixed point for $t$ since $t$ acts like a mirror. Thus $\alpha$ is not free.  Then Corollaries \ref{cor: virtually cyclic af} and \ref{cor: final}(ii) apply here.
    \end{proof}

We remark that not all virtually $\Z$ groups admit minimal topologically free but non-free actions. For example, it is well-known that any minimal action of $\Z$ on a compact Hausdorr space has to be free. The following is a generalization of this fact. We remark that Yongle Jiang also independently obtained a proof of the following result. 
\begin{prop}\label{prop: genuine free}
    Let $\alpha: \Z\times F\curvearrowright X$ be a minimal topologically free action on a compact Hausdorff space $X$ in which $F$ is a finite group. Then $\alpha$ has to be free.
\end{prop}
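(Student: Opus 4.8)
The plan is to prove that $\operatorname{stab}(x) = \{e\}$ for every $x \in X$ by decomposing the analysis according to the central subgroup $\Z \times \{e\}$ of $G = \Z \times F$, whose group law I take to be componentwise. First I would exploit centrality: for $n \neq 0$ the element $(n,e)$ is central, so its fixed-point set $F_{(n,e)} = \{x : (n,e)x = x\}$ is not only closed but $G$-invariant, since $(n,e)x = x$ forces $(n,e)(gx) = g(n,e)x = gx$ for all $g \in G$. Minimality then gives $F_{(n,e)} \in \{\emptyset, X\}$, and topological freeness (which makes $F_{(n,e)}$ nowhere dense, hence $\neq X$ as $X \neq \emptyset$) forces $F_{(n,e)} = \emptyset$. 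This establishes the first key fact: no nontrivial element of $\Z \times \{e\}$ fixes any point, i.e.\ $\operatorname{stab}(x) \cap (\Z \times \{e\}) = \{e\}$ for all $x$.

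The second step is to control the finite part. I would consider $W = \bigcup_{f \in F \setminus \{e\}} F_{(0,f)}$, the set of points with nontrivial $F$-stabilizer. Each $F_{(0,f)}$ is closed, so $W$ is closed as a finite union. The point to check is that, although an individual $F_{(0,f)}$ need not be $G$-invariant when $f \notin Z(F)$, the union $W$ is: a direct computation gives $g(0,f)g^{-1} = (0, hfh^{-1})$ for $g = (k,h)$, so if $(0,f)x = x$ then $(0, hfh^{-1})(gx) = gx$ with $hfh^{-1} \neq e$, whence $gx \in W$. Minimality again yields $W \in \{\emptyset, X\}$; but $W$ is a finite union of nowhere dense sets (topological freeness), hence itself nowhere dense, so $W \neq X$ and therefore $W = \emptyset$. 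This gives the second key fact: $\operatorname{stab}(x) \cap (\{0\} \times F) = \{e\}$ for all $x$.

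The final step combines the two, and this is where the finiteness of $F$ enters decisively. Given any $(m,f) \in \operatorname{stab}(x)$, if $m \neq 0$ I would raise it to the power $k = \operatorname{ord}(f)$: since the law is componentwise, $(m,f)^k = (km, e) \in \operatorname{stab}(x)$ with $km \neq 0$, contradicting the first step. Hence every element of $\operatorname{stab}(x)$ has trivial $\Z$-coordinate, so $\operatorname{stab}(x) \subseteq \{0\} \times F$, and the second step then forces $\operatorname{stab}(x) = \{e\}$. As $x$ was arbitrary, $\alpha$ is free.

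I expect the main subtlety to be the second step: the individual sets $F_{(0,f)}$ are merely permuted among themselves by conjugation rather than being $G$-invariant, so one cannot apply minimality to a single one of them and must instead recognize that their union $W$ is invariant and nowhere dense. The power trick in the third step is the other essential ingredient, as it is exactly the mechanism that converts a ``mixed'' stabilizing element back into an element of $\Z \times \{e\}$; the need for $\operatorname{ord}(f)$ to be finite is what makes the hypothesis that $F$ is finite indispensable.
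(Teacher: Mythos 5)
Your proof is correct and follows essentially the same route as the paper: rule out nontrivial $\Z$-components of stabilizers, use the power trick $(m,f)^{k}=(km,e)$ with $k=\operatorname{ord}(f)$ to confine $\operatorname{stab}(x)$ to $\{0\}\times F$, and eliminate the finite part via topological freeness combined with minimality (your invariant nowhere dense union $W=\emptyset$ is just the complementary formulation of the paper's dense invariant set of $F$-free points). The only, harmless, variation is in the first step: the paper argues that a nontrivial $(n,e_F)\in\operatorname{stab}(x)$ would make the stabilizer of finite index and hence the orbit finite, contradicting minimality on an infinite space, whereas you use centrality of $(n,e_F)$ to make its fixed-point set closed and invariant and then invoke minimality together with nowhere-density.
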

\begin{proof}
    Let $x\in X$ and denote by $\operatorname{stab}(x)$ the stabilizer group at $x$. Suppose $(n, e_F)\in \operatorname{stab}(x)$ for some $n\neq 0$. Then one has $n\Z\times \{e_F\}\leq \operatorname{stab}(x)$, which implies that $[\Gamma: \operatorname{stab}(x)]<\infty$ and thus the orbit of $x$ is finite. This is a contradiction to the assumption that $\alpha$ is minimal. Thus, one has the intersection $\operatorname{stab}(x)\cap (\Z\times \{e_F\})=\{(0, e_F)\}$.
Then suppose $(n,f)\in \operatorname{stab}(x)$ for some $f\neq e_F$. Let $f^k=e_F$ for a $k\in \N_+$. Then one has $x=(n, f)^kx=(kn, e_F)x$, which implies that $(kn, e_F)\in \operatorname{stab}(x)$. This is a contradiction to $\operatorname{stab}(x)\cap (\Z\times \{e_F\})=\{(0, e_F)\}$ obtained above.

Therefore, one has $\operatorname{stab}(x)\leq \{0\}\times F$ for any $x\in X$. For any $f\in F$, define $O_f=\{x\in X: (0, f)x\neq x\}$, which is open dense because $\alpha$ is topologically free. Define $O=\bigcap_{f\in F}O_f$, which is still open dense because $F$ is finite.  Observe that every $x\in O$ is a free point for $\alpha$. Since $\alpha$ is minimal, one has $\Gamma\cdot O=X$. This implies that any $y\in X$ is located on an orbit of a free point $x\in O$. Thus, $y$ is a free point for $\alpha$ itself.
\end{proof}

Nevertheless, from Lemma \ref{lem: almost periodic}, one may still construct more minimal topologically free but non-free subshifts of virtually $\Z$ groups. Let $\Gamma$ be a virtually $\Z$ group. Let $N\triangleleft \Gamma$ be the normal subgroup in $\Gamma$ such that $N\simeq \Z$ and the index $[\Gamma: N]<\infty$. Then the conjugation action of $\Gamma$ on $N$ yields a map $\varphi$ from $\Gamma$ to $\aut(N)\simeq \Z_2$ with the kernel $\ker(\varphi)=C_\Gamma(N)$, i.e., the centralizer of $N$ in $\Gamma$. 
Moreover, note that $N$ is also a normal subgroup of $C_\Gamma(N)$ of finite index. This necessarily implies that $C_\Gamma(N)\simeq N\times F$ for a finite group by looking at the transfer map $\rho: C_\Gamma(N)\to N$, which is surjective. If $\varphi$ is trivial, then $\Gamma=C_\Gamma(N)\simeq \Z\times F$. Then Proposition \ref{prop: genuine free} entails that all minimal topologically free action of such a $\Gamma$ must be free. In the case that $\varphi$ is not trivial, one has the following exact sequence
\[\begin{tikzcd}
0\arrow[r] & \Z\times F \arrow[r, "i"] & \Gamma\arrow[r, "\varphi"] & \Z_2\arrow[r] & 0.
\end{tikzcd}\]
In general, this exact sequence does not split. But in the case that $\Z_2$ acts on $\Z$ in the usual way and acts on $F$ trivially, one obtains a group $\Gamma=(\Z\times F)\rtimes \Z_2$, which is isomorphic to $D_\infty\times F$. In this case, we will construct a minimal topologically free but non-free subshift as follows.

Let $g, h\in D_\infty$ and $s, t\in F$. Then by definition, for any $x\in A^\Gamma$, one has
\[((g, t)\cdot x)(h, s)=x(g^{-1}h, t^{-1}s).\]
View $A^\Gamma=\prod_{s\in F}A^{D_\infty\times \{s\}}$ as usual and write $x\in A^\Gamma$ by $x=(x_s: s\in F)$, where $x(s)=x_s\in A^{D_\infty\times\{s\}}$ as a copy of $A^{D_\infty}$. Then, $D_\infty$ acts on $A^\Gamma$ diagonally by \[g\cdot (x_s: s\in F)=(g\cdot x_s: s\in F),\] 
where $g\cdot x_s$ is the shift for $x_s$ by $g$ in $A^{D_\infty}$.
Therefore, from this point of view, the shift action of $\Gamma$ on $A^\Gamma$ is of the form
\[((g, t)\cdot x)_s=g\cdot x_{t^{-1}s}.\]

Now choose an alphabet $A=\{0, a_s: s\in F\}$ such that any two symbols in $A$ are different, i.e., $|A|=|F|+1$. Now choose an almost periodic  $w\in \{0, 1\}^{D_\infty}$ as in Lemma \ref{lem: almost periodic}. Then for each $s\in F$, we define a word $w_s\in \{0, a_s\}^{D_\infty}$ by replacing all ``$1$'' in $w$  by ``$a_s$''. Note that all of these $w_s$ are in $A^{D_\infty}$. Now define $x_0\in A^\Gamma$ by setting $x_0(s)=w_s$ for $s\in F$.

\begin{lem}\label{lem: AP for virtually Z}
    The word $x_0\in A^\Gamma$ defined above is almost periodic.
\end{lem}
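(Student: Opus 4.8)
The plan is to verify the almost-periodicity criterion directly: given an arbitrary basic open neighborhood $U$ of $x_0$, I will exhibit a syndetic subset of the recurrent set $R_{x_0,U,\Gamma}=\{(g,t)\in\Gamma:(g,t)x_0\in U\}$, which by \cite[Proposition 7.13]{Kerr-L} suffices. Since $F$ is finite, I may assume $U=\{y\in A^\Gamma: y_s(h)=(x_0)_s(h)\text{ for all }h\in E,\ s\in F\}$ for a finite set $E\subset D_\infty$, where as above $y_s\in A^{D_\infty}$ denotes the $s$-th slot of $y$.

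The key structural observation is that each slot word $w_s$ is obtained from $w$ by the injective recoloring $\psi_s\colon\{0,1\}\to\{0,a_s\}$, $0\mapsto 0$, $1\mapsto a_s$; that is, $w_s=\psi_s\circ w$. Hence for the diagonal $D_\infty$-action I would compute, for $g\in D_\infty$, that $((g,e_F)x_0)_s=g\cdot w_s=\psi_s\circ(g\cdot w)$, so that the membership $(g,e_F)x_0\in U$ is equivalent, slot by slot and coordinate by coordinate, to $(g\cdot w)(h)=w(h)$ for all $h\in E$ — the recoloring drops out precisely because each $\psi_s$ is injective and its value therefore determines its argument. Writing $U_E=\{z\in A^{D_\infty}: z(h)=w(h)\text{ for all }h\in E\}$ for the corresponding neighborhood of $w$, this shows $R_{w,U_E,D_\infty}\times\{e_F\}\subseteq R_{x_0,U,\Gamma}$.

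To finish I would invoke that $w$ is almost periodic for the $D_\infty$-shift (Lemma \ref{lem: almost periodic}), so $R_{w,U_E,D_\infty}$ is syndetic in $D_\infty$: there is a finite $K_0\subset D_\infty$ with $K_0\,R_{w,U_E,D_\infty}=D_\infty$. Then $K_0\times F$ is finite and $(K_0\times F)\cdot(R_{w,U_E,D_\infty}\times\{e_F\})=\Gamma$, so $R_{w,U_E,D_\infty}\times\{e_F\}$ is already syndetic in $\Gamma=D_\infty\times F$; a fortiori the larger set $R_{x_0,U,\Gamma}$ is syndetic, giving almost periodicity of $x_0$.

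The point I expect to require the most care — and the conceptual heart of the argument — is the reduction in the previous paragraph: one might worry that the mutually distinct symbols $\{a_s\}_{s\in F}$ obstruct recurrence, since a nontrivial $F$-translation $(e_F,t)$ permutes the slots and recolors $a_s\mapsto a_{t^{-1}s}$, typically pushing $x_0$ out of $U$. The resolution is that I do not need any recurrence in the $F$-direction at all: restricting to $t=e_F$ costs only a finite-index factor, which is harmless for syndeticity precisely because $F$ is finite, while the injectivity of $\psi_s$ guarantees that the $D_\infty$-direction carries exactly the recurrence of $w$. I should be mildly careful to justify that a generic basic neighborhood really can be taken of the product form above (absorbing all of $F$ and enlarging the $D_\infty$-window to a common $E$), but this is routine since $F$ is finite.
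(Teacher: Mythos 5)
Your proposal is correct and follows essentially the same route as the paper's proof: both reduce to recurrence in the $D_\infty\times\{e_F\}$ direction only, use the injectivity of the recolorings to identify $R_{w_s,U_s,D_\infty}$ with $R_{w,U_E,D_\infty}$ for every $s$, and then upgrade syndeticity from $D_\infty$ to $\Gamma$ via finiteness of $F$. The only cosmetic difference is that you phrase the key step via the recoloring maps $\psi_s$ explicitly, whereas the paper simply asserts that the recurrent sets of all the $w_s$ coincide.
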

\begin{proof}
    Identify $A^\Gamma= \prod_{s\in F}A^{D_\infty\times \{s\}}\simeq\prod_{s\in F}A^{D_\infty}$ and let $x_0=(w_s: s\in F)$ be defined above and $O$ an open neighborhood of $x_0$. Without loss of any generality, one may assume $O=\prod_{s\in F} U_s$ and each $U_s$ is cylinder set in $A^{D_\infty}$ with the form $U_s=\{y\in A^\Gamma: y(i)=w_s(i), i\in I_s\}$ for some finite $I_s\subset D_\infty$. Shrink all $U_s$ if necessary, we may assume there is an $I\subset D_\infty$ and $I=I_s$ for any $s\in F$. By our construction of each $w_s$ from $w$, the recurrent set $R_{w_s, U_s, D_\infty}$ are the same set in $D_\infty$ for any $s\in F$, denoted by $R$ for simplicity. Now because $w$ has been chosen as an almost periodic word for $D_\infty$-shift, the set $R$ is syndetic in $D_\infty$ and thus syndetic in $\Gamma=D_\infty\times F$. This implies that $R_{x_0, O, \Gamma}$ is syndetic in $\Gamma$ because $R\subset R_{x_0, O, \Gamma}$ and thus $x_0$ is almost periodic.
\end{proof}

\begin{thm}
    Let $\Gamma=D_\infty\times F$ and $x_0$ be defined above. Then the subshift $\alpha: \Gamma\curvearrowright O(x_0)$ is minimal topologically free but not free. Therefore, $\alpha$ is almost finite and
    $C(O(x_0))\rtimes_r \Gamma$ is $\CZ$-stable and classifiable by the Elliott invariant.
      \end{thm}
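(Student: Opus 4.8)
The plan is to first dispose of minimality and then reduce the entire statement to a single clean structural description of the subshift: that $O(x_0)$ is $\Gamma$-equivariantly homeomorphic to the product system $O(w)\times F$, on which $\Gamma=D_\infty\times F$ acts by $(g,t)\cdot(y,r)=(g\cdot y,\,tr)$. Minimality is immediate from what is already in place: Lemma \ref{lem: AP for virtually Z} shows that $x_0$ is almost periodic, so $\Gamma\curvearrowright O(x_0)$ is minimal by the standard criterion recalled at the start of this section (\cite[Proposition 7.13]{Kerr-L}).

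For the structural isomorphism, the point is that every $x\in O(x_0)$ inherits two features that are visible on the orbit of $x_0$ and survive passing to limits. Writing $x=(x_s:s\in F)$, the first feature is that all components $x_s$ share one common support pattern in $D_\infty$: this holds for $x_0$ (each $w_s$ has support $\spp(w)$) and for $(g,t)\cdot x_0$ (each component has support $g\cdot\spp(w)$), and ``having a common support'' is a closed condition, hence holds throughout $O(x_0)$. This yields a continuous map $q\colon O(x_0)\to O(w)$ sending $x$ to its common support pattern; it is $D_\infty$-equivariant and $F$-invariant. The second feature uses that the symbols $a_s$ are pairwise distinct: each $x_s$ takes values in $\{0,a_r\}$ for a single $r\in F$, and the assignment $s\mapsto r$ is the left translation $s\mapsto t^{-1}s$ for a unique $t$, giving a continuous map $\tau\colon O(x_0)\to F$ with $\tau((g,t)\cdot x)=t\,\tau(x)$. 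Here I would note that each component is genuinely non-constant, since $\spp(w)$ is syndetic, so $\tau$ is single-valued and well defined. The pair $(q,\tau)\colon O(x_0)\to O(w)\times F$ is then continuous, $\Gamma$-equivariant, and injective, because $x$ is recovered from $(q(x),\tau(x))$ by placing the symbol $a_{\tau(x)^{-1}s}$ on the common support in each coordinate $s$. Since $w$ is almost periodic, $O(w)$ is minimal under $D_\infty$, so $O(w)\times F$ is minimal under $\Gamma$; therefore the nonempty closed invariant image of $(q,\tau)$ is all of $O(w)\times F$, and a continuous equivariant bijection of compact Hausdorff spaces is a homeomorphism. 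I expect this identification to be the main obstacle, chiefly in verifying that the two invariants pass to the orbit closure and that $\tau$ is unambiguous.

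Granting $O(x_0)\cong O(w)\times F$, both freeness assertions fall out. An element $(g,t)$ fixes $(y,r)$ if and only if $g\cdot y=y$ and $tr=r$; the second condition forces $t=e_F$, so every $(g,t)$ with $t\neq e_F$ acts without fixed points. For $t=e_F$ and $g\neq e$, the fixed-point set is $\mathrm{Fix}_{O(w)}(g)\times F$, which is nowhere dense because $D_\infty\curvearrowright O(w)$ is minimal and hence topologically free by \cite[Proposition 2.6]{J}; thus $\alpha$ is topologically free. It is not free: $w$ is an amplified word, fixed by the order-two generator of $D_\infty$ exactly as in the preceding theorem, so that generator paired with $e_F$ fixes $(w,e_F)$.

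Finally I would assemble the consequences. The group $\Gamma=D_\infty\times F$ is virtually $\Z$, and $O(x_0)\cong O(w)\times F$ is an infinite (as $O(w)$ is an infinite minimal subshift), zero-dimensional, compact metrizable space, hence has the small boundary property since clopen neighborhoods have empty boundary. Corollary \ref{cor: virtually cyclic af} then gives that $\alpha$ is almost finite, and Corollary \ref{cor: final}(ii) upgrades this to the $\CZ$-stability of $C(O(x_0))\rtimes_r\Gamma$ and its classifiability by the Elliott invariant.
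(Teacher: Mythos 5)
Your argument is correct, but it reaches topological freeness by a genuinely different route from the paper. The paper, after getting minimality from Lemma \ref{lem: AP for virtually Z}, simply exhibits a single $\Gamma$-free point: it takes a $D_\infty$-free word $y$ in $\overline{D_\infty\cdot w}$ (which exists since that system is minimal, hence topologically free by \cite[Proposition 2.6]{J}), amplifies it to $y_0=(y_s:s\in F)\in O(x_0)$, checks directly that the distinctness of the symbols $a_s$ forces $t=e_F$ and then $g=e$ for any $(g,t)$ fixing $y_0$, and concludes via the standard fact that the set of free points is a $G_\delta$ containing the dense orbit of $y_0$. You instead prove the stronger structural statement that $(q,\tau)$ gives a $\Gamma$-equivariant homeomorphism $O(x_0)\cong \overline{D_\infty\cdot w}\times F$ with $\Gamma$ acting as product of the shift and left translation, and read off topological freeness, non-freeness, and the fixed point of the reflection from that model. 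Your route costs a little more care --- you must verify that the common-support and single-symbol conditions are closed, and that $\tau$ is single-valued because the all-zero configuration does not lie in the orbit closure (which you correctly handle via syndeticity of $\spp(w)$; equivalently, the zero word is a shift-fixed point and so cannot lie in an infinite minimal subshift) --- but it buys a complete description of the system as a product, from which the paper's free point $y_0$ is just the fiber over any $D_\infty$-free $y$. The assembly of the consequences (zero-dimensionality gives the SBP, then Corollaries \ref{cor: virtually cyclic af} and \ref{cor: final}(ii)) coincides with the paper's; note that both arguments implicitly assume $\overline{D_\infty\cdot w}$ is infinite, i.e.\ that $w$ is aperiodic, which is the standing assumption of this section.
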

      \begin{proof}
          It follows from Lemma \ref{lem: AP for virtually Z} that the subshift $\alpha: \Gamma\curvearrowright O(x_0)$ is minimal. To show $\alpha$ is topologically free, it suffices to find one free word $y_0$ in $O(x_0)$. This is because the set of points with trivial stabilizers is a $G_\delta$ set and all points in the dense orbit $\Gamma\cdot y_0$ are free points. 
          
          Recall $w\in \{0, 1\}^{D_\infty}$ is almost periodic. Then $D_\infty\curvearrowright \overline{D_\infty\cdot w}$ is minimal and thus topologically free by \cite[Proposition 2,6]{J}. Choose a $D_\infty$-free word $y\in \overline{D_\infty\cdot w}$ and define $y_s$ to be in $\{0, a_s\}^{D_\infty}$ by replacing all ``$1$'' in $y$ by ``$a_s$''.   
          Now define a word $y_0=(y_s: s\in F)\in A^\Gamma$. Note that $y_0\in O(x_0)$ by construction.
           
           We claim that $y_0$ is a $\Gamma$-free word. Indeed, let $g\in D_\infty $ and $t\in F$ and suppose $(g,t)\cdot y_0=y_0$, which means $g\cdot y_{t^{-1}s}=y_s$. However, recall $g\cdot y_{t^{-1}s}\in \{0, a_{t^{-1}s}\}^{D_\infty}$ and $y_s\in \{0, a_s\}^{D_\infty}$. This implies that $t=e_F$. Then, $g\cdot y_s=y_s$ implies that $g=e_{D_\infty}$ since $y_s$ is a $D_\infty$-free word. Therefore, $y_0$ is a $\Gamma$-free word and thus the action $\alpha$ is topologically free.

           Finally, write $D_\infty=\Z\rtimes \Z_2$, for which we denote by $g_0$ the generator for $\Z_2$.
           Recall the $w$ constructed from Lemma \ref{lem: almost periodic} is a fixed point for $g_0$. This implies that $w_s$ are fixed points for $g_0$ and thus $x_0=(w_s: s\in F)$ is a fixed point for $g_0$. Thus, $\alpha$ is not free.

           Now, we apply Corollaries \ref{cor: virtually cyclic af} and \ref{cor: final}(ii) here.
      \end{proof}

\section{Acknowlegement}
The authors are grateful to the Fields Institute for hosting authors
during the Thematic Program on Operator Algebras in Fall 2023 (K. Li as a visitor and X. Ma as a postdoc). 
The authors would also like to thank Yongle Jiang, N. Christopher Phillips, Eduardo Scarparo, and Jianchao Wu for the helpful discussion and useful comments. Finally, we would like to thank the anonymous reviewers for the helpful comments and for pointing out an obvious inaccuracy.

\end{document}